\newtheorem{theorem}{Theorem}%
\newtheorem{proposition}[theorem]{Proposition}%
\newtheorem{lemma}[theorem]{Lemma}
\newtheorem{corollary}[theorem]{Corollary}
\newtheorem{example}{Example}%
\newtheorem{remark}{Remark}%
\newtheorem{definition}{Definition}%
\newtheorem*{conjecture}{Conjecture}%
\newcommand{\conv}{\mathrm{conv}}
\newcommand{\GL}{\operatorname{GL}}
\newcommand{\Diag}{\operatorname{Diag}}
\newcommand\ZZ{\mathbb{Z}}
\newcommand\RR{\mathbb{R}}
\newcommand\CC{\mathbb{C}}
\newcommand\QQ{\mathbb{Q}}
\newcommand\PP{\mathbb{P}}
\begin{document}
\title[Semigroups of Integer Points  in Convex Cones]{Semigroups of Integer Points in Convex Cones} 
\author{Grigoriy Blekherman, Jesús A. De Loera, Luze Xu, Shixuan Zhang}

\begin{abstract}
    We study the question whether the affine semigroup of integer points in a convex cone can be finitely generated up to symmetries of the cone. We establish general properties of finite generation up to symmetry, and then concentrate on the case of irrational polyhedral cones.
\end{abstract}
\maketitle              
\section{Introduction}

An affine semigroup $S$ is a subset of $\mathbb{Z}^n$ that contains $\mathbf{0}$ and is closed under addition (Some authors also call the same objects \emph{monoids} as they are submonoids of the lattice). Given a convex cone $C\subseteq \mathbb{R}^n$, the integer points $S_C:=C\cap\mathbb{Z}^n$ form a semigroup which we will call the \emph{conical semigroup} of $C$. An important example comes from any compact convex body $K\subseteq\RR^n$, the integer points of the $\text{cone}(K\times \{1\})\cap\ZZ^{n+1}$ form a conical semigroup. 

Conical semigroups appear in many areas of mathematics from the pure to the applied: 
In the theory of \emph{toric varieties} and \emph{combinatorial commutative algebra}; the coordinate rings of affine toric varieties are given by the algebras generated by the conical semigroups for rational cones.
\cite{BrunsGubeladzenotes2002,BrunsGubeladze2003,BrunsGubeladzeTrung2002,cox2005using,cox2024toric,fulton1993introduction}. More recently, generalizing the notion of Newton polytope, several authors \cite{LazarsfeldMustata2009,Kaveh+Khovanskii:semigroups}
developed the theory of \emph{Newton-Okounkov bodies}. A Newton-Okounkov body is a convex set associated to a variety equipped with auxiliary data, namely a valuation. In \cite{Kaveh+Khovanskii:semigroups} they showed that any semigroup in the lattice $\mathbb{Z}^n$ is in fact asymptotically approximated by the semigroup of the points in a sublattice and lying in a convex cone. As a consequence, for a large class of graded algebras, their Hilbert functions have polynomial growth and their growth coefficients satisfy a Brunn-Minkowski type inequality. Their theory can also be used to provide toric degenerations for general varieties and obtains a generalization of Kushnirenko's theorem.

Another topic where the lattice points of convex cones play an important role is {\em Minkowski's geometry of numbers} \cite{GeometrieZahlenGL}. Concretely, in the study of the space of positive definite quadratic forms (which forms a convex cone) and sphere packings on associated lattices. For example, the classical Diophantine problem for an integral quadratic form $Q$ is to decide for which integers $n$ the equation $Q(x)=n$ has an integral solution $x$. The minimum representable integer $n$, is the \emph{arithmetical minimum} of $Q$ and upper bounds for its value depend on properties of the lattice associated to $Q$ (see Chapters 1, 2 \cite{SchurmannBook}).
In applied mathematics, the cone of positive semidefinite matrices is of importance in convex optimization \cite{blekherman2012semidefinite} and the lattice points of convex cones are key to algorithms in integer convex optimization. In particular finite generation for rational polyhedral cones is crucial in many algorithms \cite{MR3835599,berndt2023new,MR4391780,MR3633776,deLoera2025integer,demeijer2023integrality}.

One of the most fundamental problems in semigroup theory is to investigate 
the generators of a semigroup. It was proved by Hilbert \cite{hilbert_ueber_1890} that the conical semigroups of rational polyhedral cones are finitely generated and the minimal generating sets are called \emph{Hilbert bases} \cite{deloera2012algebraic,schrijver1998theory}. This fact has been used in a variety of settings,  for example in the theory of toric varieties \cite{fulton1993introduction,cox2024toric} and in algebraic combinatorics \cite{borda2018lattice,stanley2007combinatorics}.

Our paper explores the structure of conical semigroups beyond the well-studied case of rational polyhedral cones. For irrational polyhedral cones we found an unexpected connection between the question of finite generation up to symmetry and a classical result in algebraic number theory: the \emph{Dirichlet unit theorem} (see \cite{algnumberthybook}). A related set of ideas, going under the name ``Noetherianity up to symmetry", was explored in the context of symmetric ideals in the polynomial ring with infinitely many variables \cite{MR2327026, MR2854168, MR3418745, MR3329086, MR3659339, MR3666212}.

Although conical semigroups of cones that are not rational polyhedral cannot be finitely generated in the usual sense, if a cone $C$ has enough integer matrix symmetries, then we can use a finitely generated subgroup $G$ of the group of integer symmetries to ensure finite generation.
Because the possibly infinite generators for $S_C$ can be obtained by group action $G$ on a finite set $R$ and $G$ is finitely generated, this also allows for the possibility of algorithmic methods. In \cite{deLoera2025integer} we formally introduced a new notion of finite generation for conical semigroups.

We denote by $\mathrm{GL}(n,\ZZ)$ the group of \emph{unimodular matrices} $=\{U\in\ZZ^{n\times n}:~|\det(U)|=1\}$. 

\begin{definition}\label{def:RG-FG}
Given a conical semigroup $S_C\subset\ZZ^n$, we call it $(R,G)$-finitely generated if there is a finite subset $R\subseteq S_C$ and a finitely generated subgroup $G\subseteq \mathrm{GL}(n, \ZZ)$ acting on $C$ linearly such that 
\begin{enumerate}
    \item both the cone $C$ and the semigroup $S_C$ are invariant under the group action, i.e., $G\cdot C=C$ and $G\cdot S_C =S_C$, and
    \item every element $s\in S_C$ can be represented as \[s =  \displaystyle\sum_{i\in K}\lambda_i T_{i}\cdot r_i\] for some $r_i \in R$, $T_{i}\in G$, and $\lambda_i \in \mathbb{Z}_{\geq 0}$, and where $K$ is a finite index set.
\end{enumerate}
\end{definition}

Note that when $C$ is a (pointed) rational polyhedral cone, then the conical semigroup $S_C=C\cap\ZZ^n$ is $(R, G)$-finitely generated by $R$, its \emph{Hilbert basis}, and $G$, the trivial group $\{I_n\}$. 
If $S_C$ is $(R,G)$-finitely generated, we also call the cone $C$ $(R,G)$-finitely generated.
In \cite{deLoera2025integer} the authors showed that some \emph{Lorentz cones} and all the \emph{cones of positive semidefinite matrices} are $(R,G)$-finitely generated.

In the rest of the paper, we use $[u]$ to denote the ray generated by a vector $u\in\RR^n$, i.e., 
$[u]=[u']$ whenever there exists $c>0$ such that $u=c\cdot u'$. We say an extreme ray $[u]$ is \emph{rational} if $[u] = [u']$ for some rational vector $u'\in\QQ^n$ and irrational otherwise. Polyhedral cones that have at least one irrational extreme ray are called irrational. Irrational 
cones and polyhedra are of interest in algebraic combinatorics (Ehrhart theory) \cite{borda2018lattice,cristofaro2019irrational} and in the theory of toric varieties \cite{postinghel2015degenerations,pir2022irrational}.

\subsection{Our Contributions}
In~\Cref{sec:properties} we present a set of key properties of $(R,G)$-finitely generated cones. Among other results we prove that for a nonpolyhedral cone to have an $(R,G)$-finitely generated conical semigroup, either it has infinitely many rational extreme rays, or all of its rational extreme rays lie in a proper subspace.

\begin{theorem}\label{thm:FiniteRationalRays}
    Suppose $C\subset\RR^n$ is a full-dimensional nonpolyhedral convex cone such that there are only finitely many extreme rays $[u_1],\dots,[u_m]$ of $C$ with $u_1,\dots,u_m\in\QQ^n$ and $\operatorname{span}_\RR\{u_1,\dots,u_m\}=\RR^n$.
    Then $C\cap\ZZ^n$ is not $(R,G)$-finitely generated.
\end{theorem}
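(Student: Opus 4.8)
The plan is to argue by contradiction. Suppose $C\cap\ZZ^n$ is $(R,G)$-finitely generated, witnessed by a finite set $R$ and a finitely generated group $G\subseteq\GL(n,\ZZ)$ with $G\cdot C=C$. I will show these hypotheses force $C$ to be rational polyhedral, contradicting the assumption that $C$ is nonpolyhedral. First I would observe that every $T\in G$ is a linear automorphism of $C$, hence permutes the extreme rays of $C$, and since $T\in\GL(n,\ZZ)$ preserves $\QQ^n$ it sends rational extreme rays to rational extreme rays. Thus $G$ acts on the finite set $\{[u_1],\dots,[u_m]\}$, giving a homomorphism $\varphi\colon G\to\mathfrak{S}_m$ into the symmetric group on $m$ letters. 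Let $G_0=\ker\varphi$ be the subgroup of elements fixing each ray $[u_i]$; it has index at most $m!$ in $G$.

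The key step is to show $G_0=\{I_n\}$, so that $\varphi$ is injective and $G$ is finite. Choose primitive integer representatives $u_i\in\ZZ^n$. For $T\in G_0$ we have $Tu_i=\lambda_i u_i$ with $\lambda_i>0$, and since $Tu_i\in\ZZ^n$ while $u_i$ is primitive, $\lambda_i\in\ZZ_{>0}$. Because $\operatorname{span}_\RR\{u_1,\dots,u_m\}=\RR^n$, some $n$ of the $u_i$ form a basis of eigenvectors of $T$, so $T$ is diagonalizable with positive integer eigenvalues whose product equals $\det T=\pm1$; as each eigenvalue is at least $1$, this forces every eigenvalue to equal $1$, whence $T=I_n$. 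I expect this determinant/eigenvalue argument to be the crux of the proof: it is precisely where the spanning hypothesis is used, and where the Dirichlet-unit-type behavior that permits an infinite $G$ for other cones is ruled out, since fixing $n$ independent \emph{rational} rays leaves no room for nontrivial units.

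Finally, with $G$ finite the set $R'=\{T\cdot r:T\in G,\ r\in R\}\subseteq\ZZ^n$ is finite, and condition (2) of \Cref{def:RG-FG} says every element of $S_C$ is a nonnegative integer combination of $R'$; hence $\operatorname{cone}(S_C)=\operatorname{cone}(R')$ is a closed rational polyhedral cone. On the other hand, since $C$ is full-dimensional and closed, a scaling argument—approximating each interior point $x$ by $p/N$ for an integer point $p$ within distance $\sqrt{n}/2$ of $Nx$, which lies in $C$ once $N$ is large—shows $\overline{\operatorname{cone}(C\cap\ZZ^n)}=C$. Combining the two, $C=\operatorname{cone}(R')$ is rational polyhedral, the desired contradiction. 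The only remaining care is to justify the closure identity $\overline{\operatorname{cone}(S_C)}=C$ for full-dimensional closed cones, which is where the full-dimensionality assumption enters.
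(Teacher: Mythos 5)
Your proof is correct, and while its skeleton (contradiction: finite generation forces $G$ to be finite, which forces $C$ to be rational polyhedral) matches the paper's, the crucial finiteness step is genuinely different and more elementary. The paper shows that every $T\in G$ satisfies $A_T^{m!}=I_n$, i.e.\ the group of representing matrices has exponent dividing $m!$, and then invokes the bounded Burnside problem for linear groups to conclude finiteness; the eigenvalue-$1$ step there is Lemma~\ref{lemma:RationalEigenvalue}, proved via Gauss's lemma. You instead prove that the permutation representation $\varphi\colon G\to\mathfrak{S}_m$ on the rational extreme rays is \emph{faithful}: an element of the kernel fixes each ray, its eigenvalues at primitive integer representatives are positive integers, and unimodularity of the determinant together with diagonalizability (this is exactly where the spanning hypothesis enters, as in the paper) forces the element to be $I_n$. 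This yields $\vert G\vert\le m!$ outright, with an explicit bound and no appeal to Burnside--Schur theory, and your primitivity-plus-determinant argument is a self-contained substitute for the paper's Gauss-lemma lemma; what the paper's heavier route buys is only robustness in situations where one has bounded exponent but not a faithful finite action, which is not needed here. Two small points to tighten. First, under Definition~\ref{def:RG-FG} an element $T\in G$ acts on $C$ by a linear action that need not be multiplication by the matrix $T$ itself; you should invoke Lemma~\ref{lem:unimodular} (this is one place full-dimensionality is used) to replace each $T$ by the unimodular matrix $A_T$ representing its action, and run your kernel argument on the image group $\{A_T:T\in G\}$, whose finiteness is all that the last step requires. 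Second, the theorem does not assume $C$ closed, but your final step survives without that hypothesis: since finitely generated cones are closed and your density argument gives $\overline{\operatorname{cone}(S_C)}=\overline{C}$, the chain $\operatorname{cone}(R')=\operatorname{cone}(S_C)\subseteq C\subseteq\overline{C}=\overline{\operatorname{cone}(S_C)}=\operatorname{cone}(R')$ shows directly that $C=\operatorname{cone}(R')$ is rational polyhedral; the paper's own Lemma~\ref{lem:finite_G} is equally terse on this point, so this is a refinement rather than a gap relative to the paper.
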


The argument is presented in~\Cref{sec:nonpolyhedral}. 
A rather beautiful consequence of our method is that the Fermat cones is not $(R,G)$-finitely generated:
 
\begin{example}[Fermat cones]\label{example:Fermat}
    The Fermat cone $F_k:=\{(x,y,z)\in\RR^3: x^{2k} + y^{2k}\leq z^{2k}, z\geq 0\}$ for any $k\ge 2$ is not $(R,G)$-finitely generated. 
    This follows from~\Cref{thm:FiniteRationalRays}, since by Fermat's last theorem~\cite{wiles1995modular},  the only integral extreme rays of $F_k$ are  $\{(\pm1,0,1),(0,\pm1,1)\}$.
\end{example}

We can significantly generalize this observation via a different method. 
For a closed convex full-diemnsional cone $C\subset\RR^n$ defined by polynomial inequalities, whether $C$ can be $(R,G)$-finitely generated also depends on its \emph{algebraic boundary}, i.e., the Zariski closure of its topological boundary in $\CC^n$.
Since $C$ is a cone, its algebraic boundary can be viewed as a projective hypersurface in $\PP^{n-1}:=\PP(\CC^n)$ defined by a single homogeneous polynomial.
We say the algebraic boundary of $C$ is \emph{smooth} if the defining homogeneous polynomial is nonsingular over $\CC$.
\begin{theorem}\label{thm:SmoothHypersurfaceCone}
    Suppose $C\subset\RR^n$ is a full-dimensional closed convex cone such that its algebraic boundary is a smooth projective hypersurface defined by a homogeneous polynomial of degree $d$.
    If $n\ge4$, $d\ge3$, and $(n,d)\neq(4,4)$, then $C\cap\ZZ^n$ is not $(R,G)$-finitely generated.
\end{theorem}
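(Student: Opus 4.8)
The plan is to connect the $(R,G)$-finite generation property to the arithmetic of rational points on the algebraic boundary, using Theorem~\ref{thm:FiniteRationalRays} as a black box. The key conceptual link is this: if $C\cap\ZZ^n$ is $(R,G)$-finitely generated, then by Theorem~\ref{thm:FiniteRationalRays} (applied in its contrapositive form) the cone $C$ either has infinitely many rational extreme rays, or all its rational extreme rays lie in a proper subspace. So I would first argue that the hypotheses on $(n,d)$ force the second alternative to fail — namely, I want to show that the rational extreme rays of $C$ span $\RR^n$ — and then show that the number of rational extreme rays is finite, thereby contradicting $(R,G)$-finite generation. Equivalently, the heart of the matter is to prove that a smooth projective hypersurface of degree $d$ in $\PP^{n-1}$, under the stated numerical constraints, has only finitely many rational points, and that these finitely many points nonetheless span the ambient space.

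The finiteness of rational points is where I expect the real content and the main obstacle to lie. A rational extreme ray of $C$ corresponds to a smooth rational point of the boundary hypersurface $X=V(f)\subset\PP^{n-1}$ that is a genuine extreme point of the cone. For this I would invoke a general-type / hyperbolicity mechanism: when $X$ is a smooth hypersurface of degree $d$ in $\PP^{N}$ with $N=n-1\ge 3$ and $d$ large relative to $N$, $X$ is of general type and is conjecturally (Lang--Vojta), or in suitable cases provably, Mordellic — its rational points are not Zariski dense, and more is true for the relevant extremal subset. The numerical bounds $n\ge 4$, $d\ge 3$, $(n,d)\neq(4,4)$ are exactly the arithmetic-geometry thresholds one expects: they encode the condition $d > N+1 = n$ (general type, canonical bundle ample) together with excluding the Calabi--Yau boundary case $(n,d)=(4,4)$ (a K3 surface, where rational points can be dense) and the low-dimensional cases. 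I would structure this step so that it reduces to a precise statement in the literature about rational points on smooth hypersurfaces; the careful bookkeeping — translating ``not Zariski dense'' into ``finitely many extreme rays,'' using that extreme rays of a convex cone lie on the boundary hypersurface and that the boundary of a convex body carries at most a finite set of points that are simultaneously smooth, rational, and extreme under the general-type constraint — is the delicate part.

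Granting finiteness, the spanning condition should follow from full-dimensionality and convexity together with smoothness of the boundary. The plan here is to argue that if all rational extreme rays lay in a proper subspace $V\subsetneq\RR^n$, then the convex-geometric structure of $C$ would be constrained in a way incompatible with a smooth (hence irreducible, and in particular not a product or cylinder over a lower-dimensional variety) algebraic boundary. Concretely, I would examine the supporting hyperplanes at the rational extreme points and use smoothness to rule out the degenerate configuration; since the defining polynomial $f$ is nonsingular, $X$ is irreducible and cannot contain a linear subspace as a component, which obstructs all rational extreme rays from collapsing into a proper $V$ unless there are none — but the spanning hypothesis in Theorem~\ref{thm:FiniteRationalRays} is precisely that they do span, so I only need to verify that the rational rays present do span, or alternatively invoke the $m=0$ edge case. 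In fact the cleanest route is to phrase the conclusion as an application of the contrapositive of Theorem~\ref{thm:FiniteRationalRays}: assuming $(R,G)$-finite generation, we get infinitely many rational extreme rays, and the general-type finiteness result above contradicts this directly, so the spanning issue is sidestepped entirely provided Theorem~\ref{thm:FiniteRationalRays} is stated to cover the finite case without the spanning assumption, or we supplement with a short argument handling rational rays confined to a proper subspace.

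I expect the genuinely hard step to be pinning down the correct arithmetic-geometry input: whether finiteness of rational points on these hypersurfaces is unconditional in the stated range, or whether the argument relies on a known theorem (for instance results on rational points of hypersurfaces of high degree, or Faltings-type statements for subvarieties of abelian varieties after a suitable map, or effective bounds specific to $n=4,5$) must be matched exactly to the numerical hypotheses. I would therefore organize the write-up to isolate a single clean lemma, ``a smooth hypersurface of degree $d$ in $\PP^{n-1}$ with $n\ge4$, $d\ge3$, $(n,d)\neq(4,4)$ has only finitely many rational points satisfying the convexity/extremality condition,'' cite or prove that lemma, and then derive Theorem~\ref{thm:SmoothHypersurfaceCone} as a two-line consequence of Theorem~\ref{thm:FiniteRationalRays}. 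The exclusion $(n,d)=(4,4)$ strongly signals that the engine is the trichotomy of Kodaira dimension, and I would lead with that observation.
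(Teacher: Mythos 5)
Your plan hinges on a lemma that is false in the stated range of parameters. The hypotheses $n\ge4$, $d\ge3$, $(n,d)\neq(4,4)$ do \emph{not} encode the general-type condition $d>n$: they admit Fano boundaries, e.g.\ $(n,d)=(4,3)$ or $(5,4)$, where the canonical bundle $\mathcal{O}(d-n)$ of the hypersurface is anti-ample and rational points can be Zariski dense. Concretely, the paper's own \Cref{example:norm_cones} with $n=5$, $p=4$ is covered by \Cref{thm:SmoothHypersurfaceCone}, yet its boundary $x_5^4=x_1^4+\cdots+x_4^4$ carries infinitely many rational points (Elkies' infinite family of solutions of $x^4+y^4+z^4=w^4$, placed in the hyperplane $x_4=0$); since that cone is strictly convex, every such point spans a rational extreme ray, so ``finitely many rational extreme rays'' fails and \Cref{thm:FiniteRationalRays} simply cannot be invoked. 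Even in the genuinely general-type range $d\ge n+1$, the non-density of rational points you want is the Bombieri--Lang/Lang--Vojta conjecture, which is open, so the argument would be conditional at best. Finally, you correctly flag but do not resolve the opposite edge case: \Cref{thm:FiniteRationalRays} requires the rational extreme rays to \emph{span} $\RR^n$, and a smooth boundary hypersurface may have no rational points at all, in which case your reduction yields nothing.

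The paper's proof uses a different and unconditional mechanism: automorphism groups rather than rational points. If $C\cap\ZZ^n$ is $(R,G)$-finitely generated, then by \Cref{lem:unimodular} every element of $G$ acts as a unimodular matrix preserving the boundary of $C$, hence induces a \emph{linear automorphism} of the smooth hypersurface $H\subset\PP^{n-1}$. By the theorem of Matsumura--Monsky~\cite{matsumura1963automorphisms}, the automorphism group of a smooth hypersurface of dimension at least $2$ and degree at least $3$ is finite, with the quartic K3 surfaces as the only exception --- and this is exactly what the numerical hypotheses encode: $d\ge3$ excludes quadrics (infinite automorphism group), $n\ge4$ makes $\dim H=n-2\ge2$, and $(n,d)\neq(4,4)$ excludes the K3 case. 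Hence $G$ is finite, and \Cref{lem:finite_G} forces $C$ to be a rational polyhedral cone, contradicting smoothness of the algebraic boundary. So your instinct that the exclusion $(4,4)$ points at K3 surfaces was right, but the relevant K3 pathology is a possibly infinite automorphism group, not potentially dense rational points; replacing your rational-points lemma with the Matsumura--Monsky finiteness theorem is what makes the argument go through.
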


An immediate consequence is that the conical semigroups in most (weighted) $\ell^p$-norm cones are not $(R,G)$-finitely generated.

\begin{example}[weighted $\ell^p$-norm cones]\label{example:norm_cones}
    Consider any weighted $\ell^p$-norm cone $C:=\{(x_1,\dots,x_n)\in\RR^n:x_n^{p}\ge a_1x_1^{p}+\cdots+a_{n-1}x_{n-1}^{p},x_n\ge0\}$ for any even order $p$, and positive weights $a_1,\dots,a_{n-1}$.
    Then by~\Cref{thm:SmoothHypersurfaceCone}, $C\cap\ZZ^n$ is not $(R,G)$-finitely generated for any $n\ge4$, $p\ge4$ and $(n,p)\neq(4,4)$, since the the algebraic boundary of $C$ is defined by the smooth homogeneous polynomial $x_n^{p}-a_1x_1^p-\cdots-a_{n-1}x_{n-1}^p$.
\end{example}

We now discuss how ~\Cref{thm:SmoothHypersurfaceCone} fits with some previously known results. The cone of positive semidefinite matrices was shown to be finitely generated in~\cite{deLoera2025integer}. Its
algebraic boundary is given by the determinant, which is either singular or quadratic, and so the above Theorem does not apply.
The algebraic boundary of the Lorentz (second-order) cone also discussed in ~\cite{deLoera2025integer} is quadratic. We note that for polyhedral cones, the algebraic boundaries are reducible and hence singular for $n\geq 3$. We plan to address the question of $(R,G)$-finite generation for quadratic cones in upcoming work.

In the plane, all convex cones are polyhedral, and we provide a complete characterization of $(R,G)$-finitely generated cones.

\begin{theorem}\label{thm:mother2d}
    Suppose $C\in\RR^2$ is a convex cone.
    \begin{enumerate}
   \item If $C$ is pointed with extreme rays $[u_1], [u_2]$, where $u_1, u_2\in\RR^2$. Then $S_C$ is $(R,G)$-finitely generated, if and only if either $[u_1], [u_2]$ are both rational; or $u_1, u_2$ are two eigenvectors with positive eigenvalues of a non-identity unimodular matrix $A\in\mathrm{GL}(2,\ZZ)$.
   \item If $C$ is a half-space associated with the line spanned by $r\in\RR^2$. Then $S_C$ is $(R,G)$-finitely generated, if and only if $r$ is rational or $r$ is the eigenvector with positive eigenvalue of a non-identity unimodular matrix $A\in\mathrm{GL}(2, \ZZ)$ with $\det(A)=1$.
 \end{enumerate}
\end{theorem}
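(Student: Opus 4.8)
The plan is to handle the two parts separately, and within each the sufficiency and necessity directions; the necessity arguments for (1) and (2) share a common structural core, which I would isolate first. The key reduction is the following preliminary observation (fitting the general properties of \Cref{sec:properties}): if $S_C$ is $(R,G)$-finitely generated with $G$ \emph{finite}, then $G\cdot R$ is a finite set generating $S_C$ as an ordinary semigroup; and an ordinarily finitely generated conical semigroup forces $C$ to be rational polyhedral, since $C$ is the closure of $\mathrm{cone}(S_C)$ (rational points are dense in a full-dimensional cone) while the conic hull of a finite integer set is closed and rational. Hence any cone with an irrational ray or irrational boundary that is $(R,G)$-finitely generated must use an \emph{infinite} group $G$.

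\emph{Necessity.} For part (1), assume a ray, say $[u_1]$, is irrational. Since $G\cdot C=C$, every element of $G$ permutes the two extreme rays, giving a homomorphism $G\to S_2$ with kernel $G_0$ of index at most $2$. If $G_0$ were trivial then $G$ would be finite, contradicting the previous paragraph, so $G_0$ is infinite. Any $A\in G_0\setminus\{I\}$ fixes each ray $[u_i]$ setwise, hence $Au_i=\lambda_i u_i$ with $\lambda_i>0$, exhibiting $u_1,u_2$ as eigenvectors with positive eigenvalues of a non-identity $A\in\mathrm{GL}(2,\ZZ)$, as required. For part (2) the argument is identical with ``extreme rays'' replaced by ``the boundary line $L=\RR r$'': each $g\in G$ satisfies $gL=L$, so $r$ is an eigenvector of every $g$; infiniteness of $G$ yields a non-identity $A$ fixing $r$ with positive eigenvalue, and since $A$ must map the open half-space to itself, the complementary eigenvalue is positive too, forcing $\det A=\lambda_1\lambda_2=1$.

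\emph{Sufficiency.} When both rays (resp.\ $r$) are rational, $C$ is rational polyhedral, hence ordinarily finitely generated, and $G=\{I\}$ works. The substantive case is the eigenvector one. Given $A\neq I$ with positive eigenvalues $\lambda_1,\lambda_2$, unimodularity gives $\lambda_1\lambda_2=\det A=1$, so after passing to eigencoordinates $A=\Diag(\lambda,\lambda^{-1})$ with $\lambda>1$; the cone becomes the first quadrant $Q$ and $\ZZ^2$ becomes a lattice $\Lambda$ preserved by $A$, and I take $G=\langle A\rangle$. I would then study the \emph{indecomposable} elements of $\Lambda\cap Q$ (those not a sum of two nonzero elements), show they fall into finitely many $\langle A\rangle$-orbits, and let $R$ be one representative per orbit; since every element is a finite sum of indecomposables and indecomposability is $A$-invariant, this yields $(R,\langle A\rangle)$-finite generation. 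For the half-space in part (2), once $A=\Diag(\lambda,\lambda^{-1})$ the closed half-plane is the union of the two closed quadrants, each an $A$-invariant pointed cone whose rays are eigen-rays with positive eigenvalues; applying the quadrant result to both and taking the union of the two finite sets $R$ settles this case.

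The crux — and the step I expect to be hardest — is bounding the indecomposables. The $A$-invariant $\xi\eta$ is a binary quadratic form on $\Lambda$ preserved by the infinite-order $A$, so it is a rational multiple of a rational indefinite form whose roots are the irrational eigendirections; such a form takes no nonzero value arbitrarily close to $0$ on the lattice, giving a uniform bound $\xi\eta\ge\mu_0>0$ on $\Lambda\cap(Q\setminus\{0\})$. This is precisely where the arithmetic of the real quadratic field $\QQ(\lambda)$ — equivalently, the Dirichlet unit theorem behind the existence of the hyperbolic unit $\lambda$ — enters. Using a lattice point $w_0$ with $\xi\eta=\mu_0$ and translating it by powers of $A$ along the hyperbola $\xi\eta=\mu_0$, I would show that any $v$ with $\xi_v\eta_v\ge\lambda\mu_0$ contains some $A^k w_0\neq v$ in its box $[0,\xi_v]\times[0,\eta_v]$ and is therefore decomposable. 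Consequently indecomposables satisfy $\mu_0\le\xi_v\eta_v<\lambda\mu_0$, and since a single power of $A$ brings the ratio $\xi/\eta$ into a fixed window, each orbit has a representative in a bounded region of $\RR^2$, which contains only finitely many lattice points. Finiteness of the number of orbits — hence of $R$ — follows, completing the proof.
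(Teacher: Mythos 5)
Your proposal is correct, and while your necessity arguments essentially reproduce the paper's, your sufficiency argument is a genuinely different route. On necessity: your preliminary observation (finite $G$ forces ordinary finite generation, hence a rational polyhedral cone) is the paper's \Cref{lem:finite_G}; your homomorphism from $G$ to the permutations of the two extreme rays is the two-dimensional specialization of the paper's \Cref{thm:polyhedral-necessary}; and your half-space analysis matches the proof of \Cref{thm:halfspace}. One compression to flag there: ``infiniteness of $G$ yields a non-identity $A$ fixing $r$ with positive eigenvalue'' silently skips the case in which every non-identity element acts on $r$ with a \emph{negative} eigenvalue and squares to the identity; the paper handles this by multiplying two distinct such involutions, and your argument needs the same two lines (squaring, or a product of two distinct involutions) to be complete. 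On sufficiency the two proofs diverge. The paper invokes its general \Cref{thm:simple-sufficient}: Dirichlet's unit theorem supplies unimodular matrices with prescribed eigenvectors, and the balancing \Cref{lemma:HighDimBalancing} (a fundamental parallelepiped in log-eigenvalue coordinates) shows every integer point can be moved by the group into a fixed rational polyhedral subcone $P$, whose Hilbert basis is taken as $R$. You instead work directly in eigencoordinates: the $A$-invariant form $\xi\eta$ is a positive multiple of a rational indefinite binary form whose root lines are irrational, so its nonzero lattice values are bounded below by some $\mu_0>0$; the length-one-interval argument shows any lattice point with $\xi\eta\ge\lambda\mu_0$ decomposes, so indecomposables lie in the band $\mu_0\le\xi\eta<\lambda\mu_0$ and fall into finitely many $\langle A\rangle$-orbits, whose representatives form $R$. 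This is correct (the reduction of the half-space to two eigen-quadrants at the end coincides with the paper's). What each buys: your argument is more elementary and self-contained---in particular Dirichlet's theorem is not actually needed in dimension two, since the theorem hypothesizes a unimodular $A$ rather than merely a rational eigenvalue matrix, and no auxiliary Hilbert basis is required, only indecomposables and classical reduction theory for indefinite binary forms---whereas the paper's route buys generality, since \Cref{thm:simple-sufficient} works in all dimensions and makes the planar theorem an immediate corollary.
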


The proof of Theorem \ref{thm:mother2d} will be done using Theorems \ref{thm:main2d} and \ref{thm:halfspace}.

In view of~\Cref{thm:FiniteRationalRays,thm:SmoothHypersurfaceCone,thm:mother2d}, we tackle the next important case in terms of cone complexity: that of irrational polyhedral cones.
We show that irrational polyhedral cones, that are never finitely generated in the traditional Hilbert's sense, can be $(R,G)$-finitely generated, and we come close to a full characterization of $(R,G)$-finitely generated simple irrational polyhedral cones (i.e.,  irrational polyhedral cones over a simplex).

\begin{theorem}\label{thm:polyhedral-necessary}
    Let $C\subset\RR^n$ be a polyhedral cone with extreme rays $[u_1],\dots,[u_m]$.
    If $C$ is $(R,G)$-finitely generated, then either $[u_1],\dots,[u_m]$ are all rational, or there exists a non-identity matrix $A\in\GL(n,\ZZ)$ such that $Au_i=\lambda_iu_i$ for some $\lambda_i\in\RR_{>0}$, $i=1,\dots,m$.
\end{theorem}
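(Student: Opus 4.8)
The plan is to exploit that every $T\in G$, being a linear automorphism with $T\cdot C=C$, must permute the finitely many extreme rays of $C$, and to convert $(R,G)$-finite generation into ordinary finite generation precisely when this permutation action is faithful. I will treat $C$ as full-dimensional and pointed, which is the setting in which the statement holds as written (see the obstacle paragraph for why full-dimensionality cannot be dropped). First I would record the permutation action: since each $T\in G$ is invertible and fixes $C$, it sends extreme rays to extreme rays, inducing a homomorphism $\phi\colon G\to S_m$ to the symmetric group on $\{[u_1],\dots,[u_m]\}$. Let $G_0=\ker\phi$ be the subgroup fixing every ray setwise. For $A\in G_0$ and each $i$ we have $Au_i\in[u_i]=\{c\,u_i:c\ge0\}$, and since $A$ is invertible this forces $Au_i=\lambda_iu_i$ with $\lambda_i>0$. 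Hence \emph{any} non-identity element of $G_0$ already witnesses the second alternative of the theorem.

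The core of the argument is then a dichotomy. Assume the extreme rays are not all rational; I must produce a non-identity $A\in G_0$. Because $[G:G_0]=|\phi(G)|\le m!$ is finite, $G_0$ is nontrivial as soon as $G$ is infinite, so it suffices to show $G$ is infinite. I would prove the contrapositive: if $G$ were finite, then $G\cdot R=\{T\cdot r:T\in G,\ r\in R\}$ would be a finite subset of $S_C$ (being the product of a finite group and a finite set), and by condition (2) of \Cref{def:RG-FG} every element of $S_C$ is a nonnegative integer combination of elements of $G\cdot R$. Thus $S_C=C\cap\ZZ^n$ would be finitely generated as an ordinary affine semigroup.

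To reach a contradiction I would invoke (and prove) a converse of Gordan's lemma: a full-dimensional polyhedral cone whose semigroup of lattice points is finitely generated must be rational. Concretely, if $g_1,\dots,g_k\in\ZZ^n$ generate $C\cap\ZZ^n$, set $C':=\mathrm{cone}(g_1,\dots,g_k)$, a closed rational polyhedral cone. Since each $g_i\in C$ and $C$ is a convex cone, $C'\subseteq C$; since $C\cap\ZZ^n\subseteq C'$ and $C'$ is a closed convex cone, $\overline{\mathrm{cone}(C\cap\ZZ^n)}\subseteq C'$. Using full-dimensionality, the lattice points are dense enough in direction that $C=\overline{\mathrm{cone}(C\cap\ZZ^n)}$ (approximate any interior ray by rational rays, each carrying integer points, then pass to boundary rays as limits). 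Combining, $C\subseteq C'\subseteq C$, so $C=C'$ is rational and all its extreme rays are rational, contradicting our assumption. Therefore $G$ is infinite, $G_0\neq\{I\}$, and the desired $A$ exists, completing the proof.

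The main obstacle is the converse-Gordan step together with its reliance on full-dimensionality. Without that hypothesis the conclusion genuinely fails: a single irrational ray $[v]$ carries only the lattice point $\mathbf{0}$, so $S_C=\{\mathbf{0}\}$ is trivially finitely generated (one may take $G=\{I\}$), yet $v$ need not be an eigenvector of any non-identity matrix in $\GL(n,\ZZ)$ with positive eigenvalue (for instance $v=(1,\sqrt2,\sqrt3)$). This is exactly why the passage ``$G$ finite $\Rightarrow C$ rational'' requires the lattice to span the ambient space. By contrast, the bookkeeping for the permutation action — that automorphisms scale extreme rays by positive factors — and the closedness and rationality of $C'$ are routine, so the density identity $C=\overline{\mathrm{cone}(C\cap\ZZ^n)}$ is the one technical point I would verify with care.
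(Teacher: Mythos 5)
Your proof is correct and takes essentially the same route as the paper's: the paper likewise reduces everything to showing $G$ is infinite when some ray is irrational (its Lemma~\ref{lem:finite_G}, which asserts without proof exactly the converse-Gordan step you spell out), and then uses the permutation action on the finitely many extreme rays to extract a non-identity element fixing every ray — by a pigeonhole argument on pairs $T,T'$ inducing the same permutation, which is just a less tidy version of your finite-index-kernel $G_0=\ker\phi$ argument. Your added observation that full-dimensionality is genuinely needed (with the counterexample $[(1,\sqrt2,\sqrt3)]$, whose semigroup $\{\mathbf{0}\}$ is trivially $(R,G)$-finitely generated yet admits no non-identity unimodular matrix with positive eigenvalue) is a real caveat the paper leaves implicit, since both Lemma~\ref{lem:finite_G} and Lemma~\ref{lem:unimodular} tacitly assume it.
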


Then we proceed to a sufficient condition for simple irrational polyhedral cones to be $(R,G)$-finitely generated.

\begin{theorem}\label{thm:simple-sufficient}
    Let $C\subset\RR^n$ be a simple polyhedral cone with extreme rays $[u_1],\dots,[u_n]$, where $[u_1],\dots,[u_k]$ are rational.
    Then $C$ is $(R,G)$-finitely generated if there exists $A\in\QQ^{n\times n}$ and $\lambda_1,\dots,\lambda_n\in\RR_{>0}$ such that $Au_i=\lambda_iu_i$ for $i=1,\dots,n$ and $\lambda_{k+1},\dots,\lambda_n$ are distinct.
\end{theorem}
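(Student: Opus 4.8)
The plan is to build the group $G$ from units of a suitable order of matrices, and then to reduce every integer point to a finite set using a $G$-invariant norm form as a monovariant. First I would pass to the eigenbasis: set $P=[u_1\mid\cdots\mid u_n]$, so that in the coordinates $c=(c_1,\dots,c_n)=P^{-1}x$ the cone $C$ becomes the nonnegative orthant and every matrix sharing the eigenvectors $u_i$ acts diagonally. Consider the commutative $\QQ$-algebra $\mathcal D=\{M\in\QQ^{n\times n}:Mu_i\in\RR u_i\text{ for all }i\}$ and the order $\mathcal O=\mathcal D\cap\ZZ^{n\times n}$. Because $A\in\mathcal D$ and the irrational eigenvalues $\lambda_{k+1},\dots,\lambda_n$ are distinct, $\mathcal D$ is \'etale and splits as a product of copies of $\QQ$ (one per distinct rational eigenvalue) with totally real number fields $\mathbb K_1,\dots,\mathbb K_s$, one for each Galois orbit $O_1,\dots,O_s$ of the irrational eigenvalues; the fields are totally real because all eigenvalues are real. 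I would then let $G$ be the group of totally positive units of $\mathcal O$, i.e.\ those $M\in\mathcal O\cap\GL(n,\ZZ)$ whose eigenvalues $\mu_i$ are all positive. Each such $M$ scales every $u_i$ by $\mu_i>0$, so $G\cdot C=C$, and being unimodular it also gives $G\cdot S_C=S_C$. By Dirichlet's unit theorem for orders, $G$ is finitely generated of rank $(n-k)-s$, and the logarithms of its eigenvalue tuples form a full-rank lattice in the product of the norm-zero hyperplanes $\{\sum_{i\in O_j}t_i=0\}$. Consequently $G$ acts cocompactly on the directional part of $\operatorname{int}(C)$, transverse to the $G$-invariant scales $c_1,\dots,c_k$ and the orbit products $\rho_j=\prod_{i\in O_j}c_i$; I fix a fundamental cone $\mathcal F\subseteq C$ with compact directional cross-section and $G\cdot\mathcal F\supseteq\operatorname{int}(C)$.

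Next I would introduce the potential $N(x)=\prod_{i=1}^n\ell_i(x)$, where $\ell_1,\dots,\ell_n$ is the basis dual to $u_1,\dots,u_n$. Since the Galois action permutes the $\ell_i$ within orbits, $N$ is a rational homogeneous form; clearing denominators, $D\cdot N\in\ZZ[x]$ for a fixed $D\in\ZZ_{>0}$, so $N$ takes values in $\tfrac1D\ZZ$ on $\ZZ^n$. For a diagonal matrix $M$ with eigenvalues $\mu_i$ one has $N(Mx)=\det(M)\,N(x)$, and $\det(M)=\prod_i\mu_i=1$ for $M\in G$, so $N$ is $G$-invariant; moreover $N(x)>0$ precisely on $\operatorname{int}(C)$. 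Hence every nonzero integer point of $\operatorname{int}(C)$ satisfies $N(x)\ge 1/D>0$.

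The finite generation argument then runs by induction on dimension. The proper faces of $C$ are simple cones spanned by subsets $\{u_i:i\in I\}$ carrying the restricted data $A|_{V_I}$ with distinct irrational eigenvalues, so by the inductive hypothesis each face is $(R,G)$-finitely generated, and I collect their finitely many generators into $R$. For $s$ in the relative interior of $C$, I apply some $g\in G$ to move $g^{-1}s$ into $\mathcal F$, which leaves $N$ unchanged. If $N(g^{-1}s)$ is below a fixed threshold $N_0$, then $g^{-1}s$ lies in the bounded set $\overline{\mathcal F}\cap\{N\le N_0\}$, all of whose integer points I place in $R$. Otherwise I subtract a nonzero $r\in R$ with $r\preceq g^{-1}s$ coordinatewise in the $c$-basis, so that $g^{-1}s-r\in C$ and $N(g^{-1}s-r)<N(g^{-1}s)$, and recurse on $g^{-1}s-r$. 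Because the values of $N$ lie in the discrete, positively bounded set $\tfrac1D\ZZ_{\ge 1/D}$ and strictly decrease, the recursion terminates; unwinding it writes $s=\sum_i\lambda_i\,(T_i\cdot r_i)$ with $T_i\in G$, $r_i\in R$, $\lambda_i\in\ZZ_{\ge0}$, as required.

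The step I expect to be the main obstacle is guaranteeing, at each interior reduction, a bounded generator $r\in R$ dominated by $g^{-1}s$ in the $c$-coordinates. When $C$ carries several independent $G$-invariant scales (from the $k$ rational rays and the $s>1$ orbits), a point of large norm may still sit close to a face, so no single fixed interior generator need be dominated by it; one must instead supply generators near every face and argue that the inductive face data, together with the cocompactness of $G$ on the directional part, covers all scale regimes simultaneously. Making the choice of $R$ uniform across these regimes, while controlling the possibly irrational sublattices $\ZZ^n\cap V_I$ attached to the faces, is the delicate point, and it is precisely here that the distinctness hypothesis (which forces the directional fundamental domain to be compact) and the induction on dimension do the real work.
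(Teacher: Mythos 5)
Your first step is sound and matches the paper's: under the distinctness hypothesis, the algebra of rational matrices having $u_1,\dots,u_n$ as eigenvectors is an $n$-dimensional \'etale $\QQ$-algebra, a product of copies of $\QQ$ and totally real number fields (this is Lemma~\ref{lemma:RationalSubspace}), and Dirichlet's unit theorem (\Cref{prop:DirichletUnitTheorem}) gives a finitely generated unit group whose log-eigenvalue vectors form a full-rank lattice in the product of the trace-zero hyperplanes. The gap is in your second step, the norm-form descent, and it is exactly the step you flag yourself. Two concrete problems. First, the set $\overline{\mathcal F}\cap\{N\le N_0\}$ is \emph{not} bounded once $k+s\ge2$: the form $N$ is the product of the $k+s$ independent $G$-invariants $c_1,\dots,c_k,\rho_1,\dots,\rho_s$, and inside $\mathcal F$ one may let $c_1\to0$ while $\rho_1\to\infty$ keeping $N$ fixed; so ``place all its integer points in $R$'' does not yield a finite $R$ without a further argument (finiteness of the \emph{integer} points can be rescued by lower-bounding each partial norm by $1/D_j$ on integer points off the corresponding face, but you do not make that argument). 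Second, and this is the essential issue, the existence of a nonzero $r\in R$ with $r\preceq g^{-1}s$ coordinatewise is precisely where the multi-scale structure bites: when one invariant is huge and another sits at its minimum, any dominated $r$ must have near-zero coordinates along some orbit, forcing $r$ to lie on (or next to) the corresponding face; that face may be spanned by a \emph{proper} subset of a Galois orbit, hence has irrational span and possibly no nonzero integer points at all, so ``supply generators near every face'' cannot be implemented by your induction on faces --- indeed the inductive hypothesis does not even apply to faces whose span is irrational, where the lattice $\ZZ^n\cap V_I$ has deficient rank.

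The paper closes exactly this gap by a different device: instead of a descent on $N$, it partitions the eigenvector indices by irreducible factor (Galois orbit), uses the unit groups of the maximal orders of the factor fields (raised to a suitable power $l$ to land in the order $\mathcal O$) to produce, for each orbit $J_i$, a set of $\vert J_i\vert-1$ log-linearly independent unimodular matrices acting trivially on all other coordinates, and then proves (Lemmas~\ref{lemma:HighDimBalancing} and~\ref{lemma:ReducibleBalancing}) that every integer point of $C$ can be moved by the group into one fixed rational polyhedral subcone $P\subset C$; the finite set $R$ is then a Hilbert basis of $P\cap\ZZ^n$, so Gordan's lemma replaces your monovariant entirely. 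The key trick your proposal lacks is the composition step of Lemma~\ref{lemma:ReducibleBalancing}: because units attached to one orbit fix the coordinates belonging to the others, the reductions for different orbits can be carried out independently and then multiplied, $Ax=A'x'+A''x''$, and this is what handles all scale regimes simultaneously. Without this, or an equivalent mechanism, your recursion is not shown to terminate with a finite $R$, so the proof is incomplete.
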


The proofs of~\Cref{thm:polyhedral-necessary,thm:simple-sufficient} are provided in~\Cref{sec:main-polyhedral}. A key technical tool in the proof of \Cref{thm:simple-sufficient} is the Dirichlet unit theorem which allows to conclude existence of many unimodular integer matrices which have extreme rays as eigenvectors. The key difference between these two conditions is that in~\Cref{thm:simple-sufficient}, we require the cone to be simple and the eigenvalues of $A$ associated with irrational extreme rays to be distinct, while in~\Cref{thm:polyhedral-necessary} the cone $C$ is any polyhedral cone and the matrix $A$ can have repeated eigenvalues. We note that in the case of repeated eigenvalues the (generalized version of) Dirichlet unit theorem does not apply.
In~\Cref{sec:3dim}, we close the gap between the two conditions and show that a 3-dimensional irrational polyhedral cone is $(R,G)$-finitely generated if and only if it is simple and satisfies the sufficient condition of Theorem \ref{thm:simple-sufficient}.
In~\Cref{sec:4dim-example}, we show that some examples of 4-dimensional simple cones satisfy the necessary condition in Theorem~\ref{thm:polyhedral-necessary}, but are not $(R,G)$-finitely generated due to delicate considerations caused by repeated eigenvalues. 
In fact, we have the tantalizing conjecture that the sufficient condition of Theorem \ref{thm:simple-sufficient} is necessary.

\begin{conjecture}
Let $C\subset\RR^n$ be a simple polyhedral cone with extreme rays $[u_1],\dots,[u_n]$, where $[u_1],\dots,[u_k]$ are rational.
Then $C$ is $(R,G)$-finitely generated if and only if there exists $A\in\QQ^{n\times n}$ and $\lambda_1,\dots,\lambda_n\in\RR_{>0}$ such that $Au_i=\lambda_iu_i$ for $i=1,\dots,n$ and $\lambda_{k+1},\dots,\lambda_n$ are distinct.
\end{conjecture}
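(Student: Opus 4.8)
The forward (``if'') direction is precisely \Cref{thm:simple-sufficient}, so the content of the conjecture is the converse: if a simple irrational cone $C$ is $(R,G)$-finitely generated, then one can produce a \emph{single} matrix $A\in\QQ^{n\times n}$ fixing every extreme ray whose eigenvalues on the irrational rays are pairwise \emph{distinct}. The plan is to extract such an $A$ directly from the group $G$. First I would pass to the finite-index subgroup $G'\le G$ fixing every ray $[u_i]$ setwise; since $C$ is simple, $G$ permutes its $n$ extreme rays, so $[G:G']<\infty$, and a coset rewriting (using right cosets $G=\bigsqcup_j G'g_j$, replacing $R$ by $R'=\bigcup_j g_jR$) shows $C$ is still $(R',G')$-finitely generated. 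After this reduction every $T\in G'$ is simultaneously diagonalized by $u_1,\dots,u_n$, giving a homomorphism $\Phi\colon G'\to(\RR_{>0})^n$, $T\mapsto(\mu_1(T),\dots,\mu_n(T))$ where $Tu_i=\mu_i(T)u_i$. A primitive integral eigenvector of a unimodular matrix has eigenvalue $\pm1$ (its inverse is also integral), so positivity forces $\mu_i(T)=1$ on each rational ray; and $\det T=\pm1$ with positive eigenvalues forces $\prod_i\mu_i(T)=1$. Hence $\Lambda:=\log\Phi(G')$ is a finitely generated subgroup of the hyperplane $\{x:\ x_i=0\ (i\le k),\ \sum_{i>k}x_i=0\}$.

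The conclusion then reduces to a separation statement for $\Lambda$. Suppose that for every irrational pair $k<i<j\le n$ there is some $T\in G'$ with $\mu_i(T)\neq\mu_j(T)$. Writing $G_{ij}=\{T:\mu_i(T)=\mu_j(T)\}$, each $G_{ij}$ is the kernel of a homomorphism $G'\to\RR_{>0}$, hence either all of $G'$ or of infinite index (as $\RR_{>0}$ is torsion-free); by hypothesis it is proper, so of infinite index, and B.H.~Neumann's lemma yields an element $T_0\in G'\setminus\bigcup_{i<j}G_{ij}$. Its eigenvalues on the irrational rays are then pairwise distinct, and since $T_0\in\GL(n,\ZZ)\subset\QQ^{n\times n}$ with all $\mu_i(T_0)>0$, the matrix $A=T_0$ witnesses the conjecture. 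Thus the whole problem collapses to ruling out, for each irrational pair $(i,j)$, the \emph{degenerate} case $\mu_i\equiv\mu_j$ on $G'$; equivalently, $\Lambda$ must not lie in any diagonal $\{x_i=x_j\}$. (Under the Galois-conjugate-bundle structure of the irrational rays this is exactly a Dirichlet-unit-theorem statement about the order generated by $A$, so an alternative attack would instead lower-bound $\rank\Lambda$ directly.)

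To rule out the degenerate case I would use a finiteness-of-atoms consequence of $(R',G')$-finite generation. Let $c_1,\dots,c_n$ be the functionals dual to $u_1,\dots,u_n$, so that $C=\{x:c_l(x)\ge0\}$ and $c_l(Tx)=\mu_l(T)c_l(x)$. Choosing an integral $w$ in the interior of the dual cone makes $\langle w,\cdot\rangle$ a positive integer-valued height on $C\cap\ZZ^n\setminus\{0\}$, so the semigroup is atomic, i.e.\ generated by its indecomposable elements; and a one-line argument shows every indecomposable $x$ must equal a \emph{single} term $T\cdot r$ in its representation from \Cref{def:RG-FG} (two or more nonzero terms would decompose $x$), so the indecomposables lie in finitely many $G'$-orbits. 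Now assume $\mu_i\equiv\mu_j$. Then the ratio $\rho(x)=c_i(x)/c_j(x)$ is constant on $G'$-orbits, so finitely many orbits force only finitely many values of $\rho$ among atoms with $c_j>0$, i.e.\ $\rho\le M$ for some $M$. Every interior point $x$ therefore splits as $x=F+L$, where $L$ (the atoms with $c_j>0$) satisfies $c_i(L)\le M\,c_j(L)=M\,c_j(x)$, and $F$ (the atoms on the facet $\{c_j=0\}$) carries $c_i(F)\ge c_j(x)\bigl(\rho(x)-M\bigr)$.

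The main obstacle --- and the heart of the conjecture --- is to contradict this bound, i.e.\ to show that an irrational extreme ray forces lattice behaviour incompatible with $\rho\le M$. Geometrically this is plausible: for small $\epsilon>0$ the thin subcone $C_\epsilon=\{x\in C^\circ: c_l(x)<\epsilon\,c_i(x)\ \forall l\neq i\}$ is full-dimensional, hence contains infinitely many lattice points, all with $c_i/c_j>1/\epsilon$, so letting $\epsilon\to0$ produces interior points of arbitrarily large ratio. The difficulty is exactly the facet term $F$: the above decomposition shows these large-ratio points can be absorbed by indecomposables lying on $\{c_j=0\}$ near the irrational ray $u_i$ (a lower-dimensional cone to which $(R',G')$-finite generation descends, since atoms of a face are atoms of $C$), and $j$ is not a ray of that facet, so a naive induction on dimension does not recurse on the pair $(i,j)$. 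Closing the gap therefore requires a genuinely global, quantitative input --- an equidistribution/geometry-of-numbers statement (provable via Minkowski thin-box estimates in favourable cases) asserting that the indecomposable lattice points accumulating toward an irrational extreme ray cannot have all their coordinate ratios trapped in finitely many values, even after the facet contributions are accounted for. This is precisely what is established in dimension three in \Cref{sec:3dim}, and the four-dimensional examples of \Cref{sec:4dim-example} show that the statement is genuinely sensitive to repeated eigenvalues; carrying the argument to all $n$ is the crux that keeps the statement a conjecture.
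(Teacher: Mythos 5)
First, a point of framing: the statement you were given is stated in the paper as an open \emph{conjecture}. The paper proves only the ``if'' direction (\Cref{thm:simple-sufficient}), the full equivalence in dimension three (\Cref{thm:main3d}), and gives four-dimensional evidence in \Cref{sec:4dim-example}; there is no proof of the ``only if'' direction to compare against. Your proposal does not close that direction either, and you say so yourself, so the verdict is: a correct partial reduction with a genuine, explicitly acknowledged gap. The parts you do prove are sound and track the paper's toolkit: your coset rewriting to the ray-fixing subgroup $G'$ is the paper's \Cref{lemma:NormalSubgroup}; eigenvalue $1$ on rational rays is \Cref{lemma:RationalEigenvalue} (your primitive-vector argument is a valid, even simpler, substitute for the Gauss-lemma proof); and your use of B.~H.~Neumann's covering lemma is correct and cleanly isolates the whole content of the conjecture into a single statement: if $S_C$ is $(R,G)$-finitely generated, then for every pair of irrational rays $i\neq j$ some $T\in G'$ has $\mu_i(T)\neq\mu_j(T)$. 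That is a genuine sharpening of the target beyond \Cref{thm:polyhedral-necessary}, and is a nice way to organize the problem.

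The gap is that you never rule out the degenerate case $\mu_i\equiv\mu_j$ on $G'$, and this case is precisely where the mathematical difficulty lives. Your atom/ratio mechanism --- constancy of $\rho=c_i/c_j$ on $G'$-orbits, hence $\rho\le M$ on atoms off the facet $\{c_j=0\}$, hence a forced facet contribution $F$ for interior points of large ratio --- is exactly the mechanism of the paper's Propositions~\ref{ex:subcone_4dim} and~\ref{ex:cone_4dim}, but there it succeeds only through cone-specific number theory, not a general principle. In Proposition~\ref{ex:subcone_4dim} the relevant facet contains \emph{no} nonzero lattice points, so $F$ vanishes and the contradiction is immediate; in Proposition~\ref{ex:cone_4dim} the facet does contain lattice points approximating the rays, and the contradiction requires constructing the explicit family $v^{(n_1)}$ and the Pell-type estimate $0\le 2\bar{s}s=(2a^2-b^2)/4<1/4$, which forces $2a^2-b^2=0$. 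The ``equidistribution/geometry-of-numbers'' input you invoke to handle $F$ in general is exactly what is missing; neither your argument nor the paper supplies it, and the paper's two examples show that any such input must be sensitive to the arithmetic of the facet lattice (it can be empty, or it can be a full Pell family). So: keep the Neumann reduction --- it is worth recording --- but be aware that what remains after it is not a technical residue; it \emph{is} the conjecture.
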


\section{General cones and \texorpdfstring{$(R,G)$}{(R,G)}-finite generation}

\subsection{Properties of \texorpdfstring{$(R,G)$}{(R,G)}-finite generation}\label{sec:properties}

We begin with some general properties about $(R,G)$-finitely generated cones.
While in~\Cref{def:RG-FG} we allow the unimodular matrix to act arbitrarily on the cone, it turns out for full dimensional cones such action can always be represented by multiplication of a unimodular matrix as shown below.
\begin{lemma}
    If $S_C$ is $(R,G)$-finitely generated, then for any $T\in G$, $T\cdot C = C$.
\end{lemma}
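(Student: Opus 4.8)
The plan is to unwind the set-theoretic meaning of the invariance hypothesis $G\cdot C=C$ coming from condition (1) of~\Cref{def:RG-FG}, and then to exploit that $G$ is a \emph{group}. Reading $G\cdot C$ as the union of images $\bigcup_{T\in G}(T\cdot C)$, the equality $G\cdot C=C$ forces each individual image into $C$: for every $T\in G$ one has $T\cdot C\subseteq\bigcup_{T'\in G}(T'\cdot C)=C$. Thus one inclusion, $T\cdot C\subseteq C$, is immediate and uses nothing beyond the hypothesis.

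The only real content is the reverse inclusion $C\subseteq T\cdot C$, where I would invoke closure of $G$ under inverses. Since $G\subseteq\GL(n,\ZZ)$ is a subgroup, $T^{-1}\in G$, so the inclusion just established applies equally to $T^{-1}$, giving $T^{-1}\cdot C\subseteq C$. Applying the linear map $T$ to both sides of this containment, and using that the action is a genuine linear group action so that $T\cdot(T^{-1}\cdot x)=(TT^{-1})\cdot x=x$ for all $x$, I obtain $C=T\cdot(T^{-1}\cdot C)\subseteq T\cdot C$. Combining the two inclusions yields $T\cdot C=C$.

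I expect no serious obstacle here; the single point to handle carefully is that passing from $T^{-1}\cdot C\subseteq C$ to $C\subseteq T\cdot C$ requires $T$ to act as an invertible map, so that it sends inclusions to inclusions and $TT^{-1}$ acts as the identity. This is exactly guaranteed by $T\in\GL(n,\ZZ)$ acting linearly. It is also worth recording that the argument uses only condition (1) of~\Cref{def:RG-FG}, namely $G\cdot C=C$; the finite set $R$ and the representation property of condition (2) play no role, so the conclusion in fact holds for any subgroup $G\subseteq\GL(n,\ZZ)$ leaving $C$ invariant in the stated sense.
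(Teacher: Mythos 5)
Your proof is correct and follows essentially the same route as the paper's: extract $T\cdot C\subseteq C$ from condition (1) of~\Cref{def:RG-FG}, apply the same inclusion to $T^{-1}\in G$, and then apply $T$ to obtain $C=T\cdot(T^{-1}\cdot C)\subseteq T\cdot C$. The only difference is presentational — you spell out the union-of-images reading of $G\cdot C$ and note that condition (2) is never used, which the paper leaves implicit.
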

\begin{proof}
    From Definition~\ref{def:RG-FG} (first condition), we must have $T(C)\subseteq C$.
    Since $G$ is a group,  $T^{-1}\in G$ so similarly $T^{-1}(C)\subseteq C$, which implies that $C=T(T^{-1}(C))\subseteq T(C)$.
    Combining these two containment relations shows the desired equality.
\end{proof}

\begin{lemma}\label{lem:unimodular}
    If $C\subset\RR^n$ is a full-dimensional cone and $S_C$ is $(R,G)$-finitely generated, then for any $T\in G$, there exists a unimodular matrix $A_T\in\ZZ^{n\times n}$ such that $T\cdot x = A_T x$ for any $x\in C$.
\end{lemma}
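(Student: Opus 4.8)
The plan is to realize the linear action of each $T\in G$ by an explicit matrix, show that this matrix has integer entries using that the integer points of a full-dimensional cone already generate the whole lattice, and finally deduce unimodularity by symmetry with $T^{-1}$.

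The key preliminary observation I would establish is that full-dimensionality of $C$ forces $S_C=C\cap\ZZ^n$ to generate $\ZZ^n$ as an abelian group, i.e.\ $\ZZ S_C=\ZZ^n$. Indeed, the interior of $C$ is a nonempty open cone, so it contains a ball $B(tp,t\varepsilon)$ for every sufficiently large scaling $t$ (here $B(p,\varepsilon)\subseteq C$ is any interior ball, and $B(tp,t\varepsilon)\subseteq tC=C$ since $C$ is a cone). Taking $t$ with $t\varepsilon\ge\sqrt{n}/2$, this ball contains an axis-aligned closed unit cube, whose $2^n$ vertices are lattice points lying in $C$, hence in $S_C$. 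Differences of adjacent vertices of the cube are exactly the standard basis vectors $e_1,\dots,e_n$, so each $e_i\in\ZZ S_C$ and therefore $\ZZ S_C=\ZZ^n$.

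With this in hand the rest is short. Since $G$ acts linearly and $C$ spans $\RR^n$, the map $x\mapsto T\cdot x$ is the restriction of a unique linear map, so I may write $T\cdot x=A_Tx$ for a matrix $A_T\in\RR^{n\times n}$, invertible with inverse $A_{T^{-1}}$ because $T^{-1}\in G$ inverts the action. By Definition~\ref{def:RG-FG}(1) we have $G\cdot S_C=S_C$, so in particular $T\cdot S_C\subseteq S_C\subseteq\ZZ^n$; thus $A_T$ maps every element of $S_C$ into $\ZZ^n$, and since $S_C$ generates $\ZZ^n$, linearity gives $A_T(\ZZ^n)\subseteq\ZZ^n$, i.e.\ $A_T\in\ZZ^{n\times n}$. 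The identical argument applied to $T^{-1}\in G$ shows $A_T^{-1}=A_{T^{-1}}\in\ZZ^{n\times n}$. Then $\det(A_T)$ and $\det(A_T^{-1})$ are integers with product $1$, forcing $\det(A_T)=\pm1$, so $A_T$ is unimodular, as required.

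The one step carrying all the weight is the lattice-generation claim $\ZZ S_C=\ZZ^n$: this is precisely where full-dimensionality is indispensable, since for a lower-dimensional cone $S_C$ would generate only a proper sublattice and the induced action could fail to be integral in the transverse directions. Everything after that is formal, relying only on linearity of the action and the group structure of $G$.
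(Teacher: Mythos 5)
Your proof is correct, and it takes a genuinely different route from the paper's for the key step. The paper also exploits full-dimensionality, but instead of showing that $S_C$ generates $\ZZ^n$ as a group, it constructs an actual $\ZZ$-basis of $\ZZ^n$ inside $S_C$: starting from $n$ linearly independent points of $C\cap\ZZ^n$, it repeatedly replaces one of them by a nonzero lattice point in their fundamental parallelepiped (citing a lemma from the literature) until the determinant drops to $1$; integrality of $A_T$ then follows by solving $AR=(T\cdot r_1,\dots,T\cdot r_n)$ with $\det(R)=1$, and unimodularity from $A_T(T^{-1}\cdot r_1,\dots,T^{-1}\cdot r_n)=R$. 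Your argument --- a large ball inside $C$ contains a lattice unit cube, whose edge differences are the standard basis vectors, so $\ZZ S_C=\ZZ^n$, and hence any linear map sending $S_C$ into $\ZZ^n$ maps $\ZZ^n$ into $\ZZ^n$ --- is more self-contained (no external citation, no determinant descent) and isolates exactly the fact that makes the lemma work; the paper's construction is nominally stronger (a basis of $\ZZ^n$ consisting of points of $S_C$, not mere generation) but that extra strength is not needed here. Your unimodularity step ($\det(A_T)$ and $\det(A_{T^{-1}})$ are integers multiplying to $1$) is the same idea as the paper's, phrased through the inverse action. One small quantitative slip, easily repaired: a ball of radius $\sqrt{n}/2$ contains a lattice unit cube only if its center is a half-integer point, and this already fails for $n=1$ (the interval $[-0.2,0.8]$ contains no unit interval with integer endpoints); you should instead require $t\varepsilon\ge\sqrt{n}$, since the center of the ball is within Euclidean distance $\sqrt{n}/2$ of the center of some lattice cube, which itself has circumradius $\sqrt{n}/2$. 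As $t$ may be taken arbitrarily large, this changes nothing in the structure of the proof.
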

\begin{proof}
    As $C$ is full-dimensional, we can find some $r_1,\dots,r_n\in C\cap\ZZ^n$ that are linearly independent.
    Reordering the indices if necessary, we may assume $\det(R)>0$.
    If $r_1,\dots,r_n$ does not form a basis of $\ZZ^n$, then we know that the matrix $R:=(r_1,\dots,r_n)\in\ZZ^n$ has $\det(R)>1$.
    By~\cite[Lemma 2.3.14]{deloera2012algebraic}, we know that there exists a nonzero $r'\in\ZZ^n$ in the fundamental parallelepiped $\{\sum_{i=1}^{n}\lambda_ir_i:0\le\lambda_i<1,\ i=1,\dots,n\}$, and thus if we replace $r_1$ with $r'$, we would have $\det(r',r_2,\dots,r_n)=\lambda_1\det(R)<\det(R)$.
    Since the determinant takes integer values, by repeating this procedure finitely many times, we may assume without loss of generality that $\det(R)=1$, i.e., $r_1,\dots,r_n$ becomes a basis for $\ZZ^n$.
    
    Now consider the following linear equations for $A\in\RR^{n\times n}$
    \[
        AR=(T\cdot r_1,\dots,T\cdot r_n).
    \]
    The right-hand side is an integer matrix because $T(S_C)\subseteq S_C$.
    This uniquely determines a solution $A=A_T\in\ZZ^{n\times n}$ as $\det(R)=1$.
    For any $x=\sum_{i=1}^{n}\mu_ir_i\in C$ where $\mu_1,\dots,\mu_n\in\RR$, we have that $A_Tx=\sum_{i=1}^{n}\mu_iT\cdot r_i=T\cdot x$ by the linearity of $T$ acting on $C$ (in Definition~\ref{def:RG-FG}).
    The unimodularity of $A_T$ follows from $A_T(T^{-1}\cdot r_1, \dots, T^{-1}\cdot r_n) = R$.
\end{proof}

\begin{remark}
    The system of linear equations $Ar_i=T\cdot r_i$, $i=1,\dots,n$ has a unique solution by the linear independence of the vectors $r_1,\dots,r_n$.
    As an illustration, note that when $n=2$, the system can be written as
    \[
    \begin{pmatrix}
        r_{11} & r_{12} & 0 & 0 \\
        0 & 0 & r_{11} & r_{12} \\
        r_{21} & r_{22} & 0 & 0 \\
        0 & 0 & r_{21} & r_{22}
    \end{pmatrix}
    \begin{pmatrix}
        a_{11} \\ a_{12} \\ a_{21} \\ a_{22}   
    \end{pmatrix}
    =\begin{pmatrix}
        (T\cdot r_1)_1 \\ (T\cdot r_1)_2 \\ (T\cdot r_2)_1 \\ (T\cdot r_2)_2
    \end{pmatrix},
    \]
    which is clearly nondegenerate by the linear independence of $r_1$ and $r_2$.
\end{remark}

Next we point out a simple yet useful fact that will be used later in characterization of $(R,G)$-finitely generated cones.

\begin{lemma}\label{lem:extreme_rays}
    If $S_C$ is $(R,G)$-finitely generated, then for any $T\in G$ and any extreme ray $[r]\in C$, $[T\cdot r]$ is an extreme ray of $C$.
\end{lemma}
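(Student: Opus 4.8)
The plan is to exploit that each $T\in G$ restricts to a linear automorphism of the cone $C$, and that linear automorphisms carry faces to faces; since extreme rays are exactly the one-dimensional faces, they must be permuted among themselves. Concretely, the first lemma of this subsection already gives $T\cdot C=C$, and because $G$ is a group we also have $T^{-1}\in G$ with $T^{-1}\cdot C=C$. Thus $T$ and $T^{-1}$ are mutually inverse linear bijections of $C$ onto itself, and the whole argument reduces to pushing a witnessing decomposition back and forth through these maps.

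First I would recall the decomposition characterization of an extreme ray: $[r]$ is an extreme ray of $C$ precisely when $r\in C\setminus\{0\}$ and every way of writing $r=x+y$ with $x,y\in C$ forces $x$ and $y$ to be nonnegative multiples of $r$. Taking $[r]$ extreme, I would first check that $T\cdot r\neq 0$ (otherwise $r=T^{-1}\cdot(T\cdot r)=T^{-1}\cdot 0=0$ by homogeneity, contradicting $r\neq 0$) and that $T\cdot r\in C$ since $T\cdot C=C$. Then, given any decomposition $T\cdot r=x+y$ with $x,y\in C$, I would apply $T^{-1}$ and use linearity of the action to obtain $r=T^{-1}\cdot x+T^{-1}\cdot y$ with $T^{-1}\cdot x,\,T^{-1}\cdot y\in C$. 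Extremality of $[r]$ then yields $T^{-1}\cdot x=\alpha r$ and $T^{-1}\cdot y=\beta r$ for some $\alpha,\beta\ge 0$; applying $T$ and using homogeneity gives $x=\alpha\,(T\cdot r)$ and $y=\beta\,(T\cdot r)$. Hence every such decomposition lies on the ray $[T\cdot r]$, which is therefore extreme.

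The only real care needed is in the bookkeeping around linearity: I must invoke that the action of $G$ on $C$ is linear (from Definition~\ref{def:RG-FG}), so that $T^{-1}\cdot(x+y)=T^{-1}\cdot x+T^{-1}\cdot y$ and $T\cdot(\alpha r)=\alpha\,(T\cdot r)$, and that $T^{-1}$ genuinely inverts $T$ on $C$ because $G$ acts as a group. I would emphasize that this argument requires neither full-dimensionality of $C$ nor the explicit unimodular representative from Lemma~\ref{lem:unimodular}; it uses only that $T$ is an invertible linear self-map of $C$. The main (and rather mild) obstacle is simply to phrase the extreme-ray condition and the application of $T^{-1}$ so that all intermediate vectors are seen to remain inside $C$, which is guaranteed by the identities $T\cdot C=C=T^{-1}\cdot C$.
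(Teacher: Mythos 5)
Your proof is correct and follows essentially the same route as the paper's: both arguments transport a decomposition of $T\cdot r$ back through $T^{-1}$ (using $T^{-1}\in G$ and linearity of the action), invoke extremality of $[r]$ to force the summands onto the ray $[r]$, and push forward by $T$ to conclude. The only cosmetic difference is that you argue directly with a two-summand decomposition while the paper phrases it as a contradiction with $M$ linearly independent summands; the underlying mechanism is identical.
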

\begin{proof}
    Assume for contradiction that $[T\cdot r]$ is not an extreme ray.
    Then we can find linearly independent $r_1,\dots,r_M\in C$ such that $T\cdot r=\sum_{i=1}^{M}r_i$ for some $M>1$.
    Since $r=T^{-1}\cdot (T\cdot r)=\sum_{i=1}^{M}T^{-1}\cdot r_i$ defines an extreme ray $[r]$ of $C$, this implies that there exist $\mu_1,\dots,\mu_M>0$ such that $T^{-1}\cdot r_i=\mu_ir$. 
    Then $r_i=\mu_i T\cdot r$ for each $i=1,\dots,M$, which contradicts with the linear independence of $r_1,\dots,r_M$.
\end{proof}

We also prove that any action must fix the lineality space of $C$.
\begin{lemma}\label{lem:subspace}
    If $S_C$ is $(R,G)$-finitely generated and $L$ is the maximal subspace contained in $C$, then for any $T\in G$, $T\cdot L = L$.
\end{lemma}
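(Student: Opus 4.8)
The plan is to exploit the maximality property that defines the lineality space $L$, together with the fact established in the first lemma of this subsection that every $T\in G$ satisfies $T\cdot C=C$ and is therefore a bijection of $C$ onto itself. Since $L$ is by definition the largest linear subspace contained in $C$, it suffices to show that $T\cdot L$ is again a subspace contained in $C$; maximality will then force $T\cdot L\subseteq L$, and applying the same reasoning to $T^{-1}\in G$ will give the reverse inclusion.

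First I would verify that the restriction of the action of $T$ to $L$ is a genuine $\RR$-linear map. The action is linear on the cone $C\supseteq L$, so it is additive and positively homogeneous there; since $T\cdot\mathbf{0}=\mathbf{0}$, for any $x\in L$ (so that $-x\in L\subseteq C$ as well) we get $T\cdot x+T\cdot(-x)=T\cdot\mathbf{0}=\mathbf{0}$, hence $T\cdot(-x)=-(T\cdot x)$. Thus $T$ acts linearly on the subspace $L$, and consequently $T\cdot L$ is itself a linear subspace of $\RR^n$.

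Next, because $L\subseteq C$ we have $T\cdot L\subseteq T\cdot C=C$, so $T\cdot L$ is a subspace contained in $C$. By the defining maximality of $L$, this yields $T\cdot L\subseteq L$. Running the identical argument with $T^{-1}$ (which also lies in $G$ and fixes $C$) gives $T^{-1}\cdot L\subseteq L$, i.e.\ $L\subseteq T\cdot L$. Combining the two inclusions gives $T\cdot L=L$.

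The only point requiring care—the potential obstacle—is the step that upgrades the a priori only ``conical'' (additive, positively homogeneous) action on $C$ to an honest linear action on the subspace $L$; once negation is shown to be respected via $T\cdot\mathbf{0}=\mathbf{0}$, the remainder is a direct maximality argument. Equivalently, one could use the characterization $L=C\cap(-C)$ together with $T\cdot(-C)=-(T\cdot C)=-C$, so that $T\cdot L=(T\cdot C)\cap\bigl(-(T\cdot C)\bigr)=C\cap(-C)=L$, provided the action is extended consistently to $-C$.
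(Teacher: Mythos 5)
Your proof is correct, but it follows a genuinely different route from the paper's. The paper argues by contradiction: it decomposes $C = L + C'$ with $C'$ pointed and $L\cap C'=\{0\}$, invokes Lemma~\ref{lem:unimodular} to represent the action by a matrix $A_T$ (so that $T\cdot(-x)=-A_Tx$), and derives a contradiction with the pointedness of $C'$ from the images of $x$ and $-x$. You instead argue directly: the action restricted to $L$ respects negation (via $T\cdot\mathbf{0}=\mathbf{0}$ and additivity), so $T\cdot L$ is a linear subspace contained in $T\cdot C=C$, and since the lineality space $L=C\cap(-C)$ contains \emph{every} subspace of $C$, you get $T\cdot L\subseteq L$, with the reverse inclusion from $T^{-1}\in G$. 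Your approach buys two things: it is shorter, and it avoids Lemma~\ref{lem:unimodular} altogether, which is formally stated only for full-dimensional cones --- a hypothesis that does not appear in the statement being proved --- so your argument is actually cleaner on that point than the paper's citation. What the paper's route buys is the explicit decomposition $C=L+C'$ and the concrete identification of the obstruction (a nonzero element of the pointed part whose negative would also lie in it). One small caution: your conclusion $T\cdot L\subseteq L$ needs $L$ to be the \emph{maximum} subspace in $C$ (i.e., containing all others), not merely a maximal one; this is the standard fact $L=C\cap(-C)$, which you correctly invoke, but it is worth stating explicitly since ``maximal'' alone would not suffice. Your parenthetical one-line variant via $T\cdot(-C)=-C$ is rightly flagged as needing the action to be defined on $-C$, which the definition does not provide, so it should stay as a remark rather than the proof.
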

\begin{proof}
    By the assumption of $L$, we have $C = L + C'$, where $C'$ is a pointed cone and $L\cap C'=\{0\}$.
    Assume, for the sake of contradiction, that there exists $T\in G$ and $x\in L$ such that $T\cdot x\notin L$.
    By Lemma \ref{lem:unimodular}, we know that $T\cdot x = A_T x$ for any $x\in C$.
    Then we can assume that $T\cdot x = A_T x = x_L + r$, where $x_L\in L$ and $r\in C'$. Because $-x\in L$, we have $T\cdot (-x) = A_T (-x) = -x_L - r = x_L' + r'$, where $x_L'\in L$ and $r'\in C'$. Thus, $r' + r = x_L' - x_L\in L$ but $r' + r \in C'$. Hence, $r' + r = 0$, a contradiction to $C'$ is pointed.
\end{proof}

For nontrivial full-dimensional cones, scaling actions cannot be included in the group $G$.
\begin{lemma}\label{lem:no_scaling}
    Suppose that $C\subsetneq\RR^n$ is full dimensional, and $S_C$ is $(R,G)$-finitely generated. For any $T\in G$, if $A_T=\lambda I_n$, then $\lambda = 1$.
\end{lemma}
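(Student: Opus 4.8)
The plan is to combine the unimodularity of $A_T$ furnished by Lemma~\ref{lem:unimodular} with the cone invariance $T\cdot C=C$ to first force $\lambda\in\{1,-1\}$ and then eliminate the possibility $\lambda=-1$ by a full-dimensionality argument.

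First I would invoke Lemma~\ref{lem:unimodular}: since $C$ is full-dimensional, $A_T$ is a unimodular integer matrix representing the action of $T$ on $C$. The hypothesis $A_T=\lambda I_n$ then forces $\lambda\in\ZZ$ (every diagonal entry of an integer matrix is an integer), and unimodularity gives $1=\lvert\det A_T\rvert=\lvert\lambda\rvert^n$, so $\lvert\lambda\rvert=1$ and hence $\lambda=\pm1$. (Even without integrality, $\det(\lambda I_n)=\lambda^n$ together with $\lvert\det A_T\rvert=1$ yields $\lvert\lambda\rvert=1$.)

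It remains to exclude $\lambda=-1$. The key observation is that if $\lambda=-1$, then $T\cdot x=A_Tx=-x$ for every $x\in C$, so $T\cdot C=-C$. On the other hand, the first lemma of this subsection (stating $T\cdot C=C$ for $(R,G)$-finitely generated $S_C$) gives $T\cdot C=C$, and therefore $C=-C$. For a convex cone this symmetry means that $x\in C$ implies $-x\in C$, so every element of $C$ lies in its lineality space; thus $C$ coincides with its lineality space and is a linear subspace. Being full-dimensional, this subspace must be all of $\RR^n$, i.e.\ $C=\RR^n$, contradicting the standing hypothesis $C\subsetneq\RR^n$. Hence $\lambda=1$.

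The argument is short, and the only step needing a moment's care is the implication $C=-C\Rightarrow C=\RR^n$ for a full-dimensional convex cone; I expect this to be the (very minor) crux. It can be argued either directly, as above, via the lineality space, or by noting that a full-dimensional cone contains a basis $r_1,\dots,r_n$ together with their negatives, whose nonnegative combinations already exhaust $\RR^n$.
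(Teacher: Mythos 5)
Your proof is correct and follows essentially the same route as the paper's: unimodularity of $A_T$ (via Lemma~\ref{lem:unimodular}) forces $\lambda=\pm1$, and then full-dimensionality together with the invariance $T\cdot C=C$ rules out $\lambda=-1$ because $C\supseteq -C$ would force $C=\RR^n$. The only cosmetic difference is that the paper concludes $C=\RR^n$ by placing a ball $B$ and its reflection $-B$ inside $C$, whereas you argue via the lineality space; both are the same full-dimensionality argument.
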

\begin{proof}
    By~\Cref{lem:unimodular}, $A_T$ must be unimodular, so its determinant $\det(A_T)=\lambda^n=1$.
    Thus $\lambda=\pm1$.
    The case $\lambda=-1$ is impossible: take any $n$-ball $B\subset C$ and $T\cdot B=\lambda B\subset C$ by the definition of $(R,G)$-finite generation.
    This means that both $B$ and $-B$ are contained in $C$, and thus $C=\RR^n$ which is a contradiction.
\end{proof}

\begin{lemma}\label{lem:finite_G}
Suppose that $S_C$ is $(R,G)$-finitely generated. If $G$ is finite, then $C$ is a rational polyhedral cone.
\end{lemma}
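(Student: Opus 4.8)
The plan is to use finiteness of $G$ to extract a \emph{finite} semigroup generating set for $S_C$ consisting of integer vectors, and then to recover $C$ as the closed rational polyhedral cone that those vectors span.

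First I would form the finite set $R':=G\cdot R=\{T\cdot r:~T\in G,\ r\in R\}$. Since $G$ and $R$ are both finite, $R'$ is finite, and since each $T\in\GL(n,\ZZ)$ and $r\in R\subseteq S_C\subseteq\ZZ^n$ with $G\cdot S_C=S_C$, we get $R'\subseteq S_C\subseteq\ZZ^n$. Condition (2) of Definition~\ref{def:RG-FG} then says precisely that every $s\in S_C$ is a nonnegative integer combination of elements of $R'$; in particular $S_C=C\cap\ZZ^n\subseteq C':=\mathrm{cone}(R')$. Because $R'$ is a finite set of integer (hence rational) vectors, $C'$ is a rational polyhedral cone, and in particular it is closed.

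The remaining task is to show $C=C'$. The inclusion $C'\subseteq C$ is immediate, since $R'\subseteq C$ and $C$ is a convex cone. For the reverse inclusion I would prove that $\overline{\mathrm{cone}(C\cap\ZZ^n)}=C$, invoking full-dimensionality of $C$: for any $x\in\mathrm{int}(C)$ choose $\epsilon>0$ with $B(x,\epsilon)\subseteq C$; then for all large $t>0$ the nearest integer point $p_t$ to $tx$ satisfies $p_t\in B(tx,\sqrt n)\subseteq B(tx,t\epsilon)=tB(x,\epsilon)\subseteq C$, so $p_t\in C\cap\ZZ^n$, while $\|p_t/t-x\|\le\sqrt n/t\to0$ gives $p_t/t\to x$. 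Hence $\mathrm{int}(C)\subseteq\overline{\mathrm{cone}(C\cap\ZZ^n)}$, and taking closures yields $C\subseteq\overline{\mathrm{cone}(C\cap\ZZ^n)}$. Combining this with $C\cap\ZZ^n\subseteq C'$ and the closedness of $C'$ gives $C\subseteq C'$, so $C=C'$ is rational polyhedral.

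The one subtlety worth flagging is that the argument requires $C$ to be full-dimensional: otherwise an irrational ray, whose only integer point is the origin, would give $S_C=\{\mathbf 0\}$, trivially $(R,G)$-finitely generated with $G=\{I_n\}$ yet not rational polyhedral. Full-dimensionality is exactly what powers the lattice-density step and lets the finitely many integer generators of $R'$ pin down every direction of $C$. Accordingly, I expect the main (and only nonroutine) point to be the approximation $p_t/t\to x$ together with the verification $p_t\in C$ for large $t$; the rest is bookkeeping about the finiteness of $G\cdot R$ and the closedness of rational polyhedral cones.
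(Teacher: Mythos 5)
Your proof is correct, and its core is the same as the paper's: form the finite orbit set $R'=G\cdot R\subseteq S_C$ and note that condition (2) of Definition~\ref{def:RG-FG} exhibits $R'$ as an ordinary finite generating set of the semigroup $S_C$. The difference lies in what happens next: the paper's proof stops there, asserting without further argument that finite generation of $S_C$ ``implies that $C$ is a rational polyhedral cone,'' whereas you actually prove that implication — the rounding-and-scaling argument giving $\mathrm{int}(C)\subseteq\overline{\mathrm{cone}(C\cap\ZZ^n)}$, hence $C=\mathrm{cone}(R')$ by closedness of the rational polyhedral cone $\mathrm{cone}(R')$, is exactly the content the paper leaves implicit. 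Your caveat is substantive rather than cosmetic: this last step genuinely requires full dimensionality (or at least that the linear span of $C$ be a rational subspace, in which case the same argument runs inside that subspace), and your irrational-ray example, where $S_C=\{\mathbf{0}\}$ is $(R,G)$-finitely generated with $G=\{I_n\}$ yet $C$ is not rational polyhedral, shows the lemma is false as literally stated, since the paper's statement imposes no such hypothesis. This is harmless in \Cref{thm:FiniteRationalRays} and \Cref{thm:SmoothHypersurfaceCone}, where the cone is assumed full-dimensional, but the lemma is also invoked in the proof of \Cref{thm:polyhedral-necessary} for an arbitrary polyhedral cone, so the hypothesis you flagged is one the paper ought to make explicit.
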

\begin{proof}
Suppose that $S_C$ is $(R,G)$-finitely generated with finite $G$. Then we know that the generating set $R' = \{g\cdot r: g\in G, r\in R\}$ is also finite. Therefore, $S_C$ is finitely generated by $R'$, which implies that $C$ is a rational polyhedral cone.
\end{proof}

The following fact from linear algebra will also be useful.
\begin{lemma}\label{lemma:RationalEigenvalue}
    Let $u\in\QQ^n$ and matrix $A\in\GL(n,\ZZ)$ such that $Au=\lambda u$ for some $\lambda\in\RR$.
    Then $\lambda=1$.
\end{lemma}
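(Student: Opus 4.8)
The plan is to wring arithmetic constraints out of $\lambda$ by playing the rationality of the eigenvector against the integrality of both $A$ and $A^{-1}$, reducing to two possible values, and then confronting the sign. First I would observe that $\lambda$ must be rational. Since $u\ne 0$, fix a coordinate $i$ with $u_i\ne 0$; because $A\in\ZZ^{n\times n}$ and $u\in\QQ^n$ we have $Au\in\QQ^n$, so reading the $i$-th coordinate of $Au=\lambda u$ gives $\lambda=(Au)_i/u_i\in\QQ$. This is the only place where the hypothesis $u\in\QQ^n$ is used, and it is essential.

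Next I would upgrade rationality to two-sided integrality. The characteristic polynomial $\chi_A(t)=\det(tI_n-A)$ is monic with integer coefficients and has $\lambda$ as a root, so $\lambda$ is an algebraic integer; a rational algebraic integer is an ordinary integer, hence $\lambda\in\ZZ$. The same reasoning applies to $A^{-1}$: unimodularity of $A$ forces $A^{-1}\in\ZZ^{n\times n}$, and from $Au=\lambda u$ we get $A^{-1}u=\lambda^{-1}u$, so $\lambda^{-1}\in\ZZ$ as well. The only integers whose reciprocal is again an integer are $\pm1$, so the unconditional conclusion of the argument is $\lambda\in\{1,-1\}$, equivalently $|\lambda|=1$.

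The main obstacle is the final reduction from $\{1,-1\}$ to $\{1\}$, i.e.\ eliminating $\lambda=-1$. I would flag at the outset that this step cannot succeed from the hypotheses taken in complete isolation: $A=-I_n$ is unimodular (as $\lvert\det(-I_n)\rvert=1$), and every nonzero $u\in\QQ^n$ satisfies $(-I_n)u=-u$, so $\lambda=-1$ is genuinely attainable. Thus the passage to $\lambda=1$ is exactly a statement about the \emph{sign} of $\lambda$, and it holds precisely when $\lambda$ is known to be positive. In every setting where this lemma is invoked that positivity is automatic, because the eigenvalue equation arises from a ray being mapped to itself, $[Au]=[u]$, which under the paper's convention $[u]=[u']\iff u=c\,u'$ with $c>0$ means $\lambda>0$; this is how the equation enters via \Cref{lem:extreme_rays}. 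Combined with $\lambda\in\{1,-1\}$ this yields $\lambda=1$. I would therefore present the proof in two clearly separated layers: the purely arithmetic core, valid with no extra assumption, giving $\lambda\in\{1,-1\}$, and the sign determination, which I would record as resting on the positivity $\lambda>0$ supplied by ray-preservation rather than quietly folding it in. Making this dependence explicit, together with the $-I_n$ witness, is the honest way to address the statement exactly as worded, since it pinpoints where (and why) the hypothesis that the eigenvalue be positive is needed for the conclusion $\lambda=1$.
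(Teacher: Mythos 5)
Your proof is correct, and it differs from the paper's in two ways worth recording. First, the mechanics of the arithmetic core: where you obtain $\lambda\in\ZZ$ from the fact that a rational root of the monic integer polynomial $\det(tI_n-A)$ must be an integer, and then pin $\lambda$ down to $\pm1$ by running the same argument on $A^{-1}\in\ZZ^{n\times n}$ (so that $\lambda^{-1}\in\ZZ$), the paper instead argues via Gauss's lemma: $p_A(t)$ is primitive, $t-\lambda$ divides it over $\QQ$, and an iterated factorization shows $t-\lambda\in\ZZ[t]$, whose constant term then divides $p_A(0)=\pm\det(A)=\pm1$. Both routes deliver the same conclusion; yours is the more elementary (rational root theorem plus integrality of the unimodular inverse), while the paper's avoids any explicit use of $A^{-1}$. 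Second, and more substantively, your sign caveat is not pedantry but catches a genuine imprecision: with only $\lambda\in\RR$ as hypothesized, the lemma as literally stated is false, exactly by your witness $A=-I_n$ (unimodular, since $\lvert\det(-I_n)\rvert=1$) acting on any nonzero $u\in\QQ^n$ with $\lambda=-1$. The paper's own proof silently inserts the missing hypothesis by opening with ``By assumption, there exists $\lambda>0$,'' which appears nowhere in the statement. You are right that positivity is what is actually available at every point of use --- the eigenvalue equations fed into this lemma always arise from ray preservation, $[Au]=[u]$, via \Cref{lem:extreme_rays} or the definition of $H_C$, hence $\lambda>0$ --- so your two-layer presentation (an unconditional arithmetic core giving $\lvert\lambda\rvert=1$, then the sign supplied by ray preservation) is the correct repair of the statement rather than a deviation from it.
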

\begin{proof}
    By assumption, there exists $\lambda>0$ such that $Au=\lambda u$.
    Since $A\in\GL(n,\ZZ)$ and $u\in\QQ^n$, $\lambda\in\QQ$.
    Consider the characteristic polynomial $p_A(t)\in\ZZ[t]$ of $A$, which is divisible by $t-\lambda$ in $\QQ[t]$ and primitive due to $\det(A)=1$.
    By Gauss' lemma, we know that $p_A(t)$ is reducible over $\ZZ$.
    In fact, suppose it factors as $p_A=q_1q_2\in\ZZ[t]$, in which case the constants $q_1(0)=q_2(0)=\pm1$ so they are also primitive in $\ZZ[t]$.
    Since either $q_1$ or $q_2$ must be divisible by $t-\lambda$ in $\QQ[t]$, by repeating the argument, we must have $t-\lambda\in\ZZ[t]$, or equivalently, $\lambda\in\ZZ$ and thus $\lambda=1$.
\end{proof}

\subsection{Non-examples of \texorpdfstring{$(R,G)$}{(R,G)}-finite generation}
\label{sec:nonpolyhedral}

Now we are ready to show Theorem \ref{thm:FiniteRationalRays} that states an $(R,G)$-finitely generated nonpolyhedral cone must have infinitely many rational extreme rays, or all of its rational extreme rays lie in a proper subspace.

\begin{proof}[Proof of Theorem \ref{thm:FiniteRationalRays}]
    By scaling we may assume that $u_1,\dots,u_m\in\ZZ^n$.
    Suppose $C\cap\ZZ^n$ is $(R,G)$-finitely generated for some $G\subset\GL(n,\ZZ)$.
    From Lemma~\ref{lem:extreme_rays}, any $T\in G$ induces a permutation on the finite set of integral extreme rays $[u_1],\dots,[u_m]$.
    Then for each $T\in G$, $T^{m!}$ induces the identity permutation on these extreme rays.
    Since $C$ is full-dimensional, by~\Cref{lem:unimodular} there exists $A_T\in\GL(n,\ZZ)$ representing $T$, so $A_T^{m!}\in\GL(n,\ZZ)$ represents $T^{m!}$.
    Then $u_1,\dots,u_m$ are eigenvectors of the matrix $A_T^{m!}$ since $[A_T^{m!}u_i]=[u_i]$ for $i=1,\dots,m$.
    By~\Cref{lemma:RationalEigenvalue}, their eigenvalues are all 1.
    Then $A_T^{m!}$ must be the identity matrix following the assumption that $u_1,\dots,u_m$ span $\RR^n$. 
    Thus from the bounded Burnside problem for linear groups~\cite[Theorem 6.13]{ceccherini2021burnside}, we know that $G$ must be finite.
    By Lemma~\ref{lem:finite_G}, $C$ must be a rational polyhedral cone, which is a contradiction.
\end{proof}

We are also ready to prove \Cref{thm:SmoothHypersurfaceCone}.
\begin{proof}[Proof of Theorem~\ref{thm:SmoothHypersurfaceCone}]
    Suppose $C\cap\ZZ^n$ is $(R,G)$-finitely generated.
    Since $C$ is full-dimensional, by Lemma~\ref{lem:unimodular} any action in $G$ is represented by a unimodular matrix $A\in\GL(n,\ZZ)$.
    Such $A$ must preserve the boundary of $C$ in $\RR^n$, and thus define an automorphism of $\PP^{n-1}$ that preserves the algebraic boundary of $C$ in $\PP^{n-1}$.
    By assumption, the algebraic boundary is a smooth hypersurface $H$ of degree $d$.
    Since the automorphism group of $H$ is finite for any $n\ge3$, $d\ge4$, and $(n,d)\neq(4,4)$~\cite{matsumura1963automorphisms}, our group $G$ must also be finite.
    Hence~\Cref{lem:finite_G} dictates $C$ to be a rational polyhedral cone, which contradicts with the assumption of smooth algebraic boundary.
\end{proof}

\section{Polyhedral cones and \texorpdfstring{$(R,G)$}{(R,G)}-finite generation}
\label{sec:main-polyhedral}

In this section, we provide the proofs for~\Cref{thm:polyhedral-necessary,thm:simple-sufficient} and use them to fully characterize the $(R,G)$-finitely generated cones in the plane, and all 3-dimensional pointed polyhedral cones.

\subsection{Properties of polyhedral cones}

We are now ready to prove~\Cref{thm:polyhedral-necessary}.
\begin{proof}[Proof of~\Cref{thm:polyhedral-necessary}]
    The case where $[u_1],\dots,[u_m]$ are rational is well-known so we assume that one of them is irrational.
    By Lemma~\ref{lem:finite_G}, we know that $G$ must be infinite in this case.
    From Lemma~\ref{lem:extreme_rays}, any $T\in G$ defines a permutation $\pi_T$ on the finite set of extreme rays $E:=\{[u_1],\dots,[u_m]\}$.
    Thus there must be at least two different actions $T,T'\in G$ such that they define the same permutation $\pi_{T}=\pi_{T'}$ on $E$.
    In particular, they have the same order $p\ge1$, i.e., $\pi_{T^p}=\pi_{(T')^p}$ is the identity permutation on $E$.

    If $T^p$ is not the identity in $G$, then let $A=A_{T^p}\in\GL(n,\ZZ)$ be the non-identity unimodular matrix defined in Lemma~\ref{lem:unimodular}.
    Since $[A u_i]=[u_i]$, there exists $\lambda_i>0$ such that $A u_i=\lambda_i u_i$, for each $i=1,\dots,m$.
    This shows that $u_1,\dots,u_m$ are eigenvectors of a desired $A$ with positive eigenvalues $\lambda_1,\dots,\lambda_m$.

    If $T^p$ is the identity in $G$, then $T'':=T^{p-1}T'$ is not the identity because $T'\neq T$.
    Now let $A=A_{T''}$ be the integer unimodular matrix associated with $T''$.
    By the same argument, we see that there exist $\lambda_1,\dots,\lambda_m>0$ such that $A u_i=\lambda_i u_i$, which shows that $u_1,\dots,u_m$ are eigenvectors of $A$ as desired.
\end{proof}

\begin{corollary}\label{cor:3DSimple}
    If $C\subset\RR^3$ is a pointed irrational polyhedral cone, and $S_C$ is $(R,G)$-finitely generated, then $C$ is simple.
\end{corollary}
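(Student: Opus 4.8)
The plan is to feed the cone into Theorem~\ref{thm:polyhedral-necessary} and then exploit the rigidity of $3\times3$ unimodular matrices. Since $C$ is irrational it has an irrational extreme ray, so Theorem~\ref{thm:polyhedral-necessary} supplies a non-identity matrix $A\in\GL(3,\ZZ)$ and positive reals with $Au_i=\lambda_iu_i$ for every extreme ray $[u_i]$ of $C$. As $C$ is a full-dimensional pointed cone, its extreme rays span $\RR^3$, so $m\ge3$, and any two \emph{distinct} extreme rays of a pointed cone are linearly independent. Recall that $C$ is \emph{simple} precisely when it has three linearly independent extreme rays; thus the entire goal reduces to showing $m=3$ with the three rays independent.

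The heart of the argument is the claim that no two extreme rays can share an eigenvalue. Suppose to the contrary that $Au_i=\lambda u_i$ and $Au_j=\lambda u_j$ for some $i\neq j$ and a common $\lambda>0$. Since $u_i,u_j$ are linearly independent, the $\lambda$-eigenspace $V_\lambda$ is at least two-dimensional, so $\lambda$ is a root of the characteristic polynomial $p_A\in\ZZ[t]$ of multiplicity at least two.

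I would then argue, exactly in the spirit of Lemma~\ref{lemma:RationalEigenvalue}, that such a $\lambda$ must equal $1$. If $\lambda$ were irrational, its minimal polynomial $q$ over $\QQ$ would be irreducible of degree $\ge2$ and would occur in $p_A$ with exponent equal to the multiplicity of $\lambda$, hence $\ge2$; then $\deg p_A\ge 2\deg q\ge4$, contradicting $\deg p_A=3$. So $\lambda\in\QQ$, and as a rational root of the monic integer polynomial $p_A$ it is an integer dividing $p_A(0)=-\det A=\pm1$; positivity forces $\lambda=1$. Now the extreme rays all lie in the union of eigenspaces of $A$ but span $\RR^3$, so either the $1$-eigenspace is all of $\RR^3$, forcing $A=I$ and contradicting that $A$ is non-identity, or there is an extreme ray outside $V_1$ whose eigenvalue $\mu$ satisfies $\mu\neq1$. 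In the latter case the eigenvalues of $A$ are $1,1,\mu$ with product $\det A=\pm1$, so $\mu=\pm1$, hence $\mu=-1<0$, contradicting the positivity guaranteed by Theorem~\ref{thm:polyhedral-necessary}. Either way we reach a contradiction, proving the claim.

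Consequently the $m$ extreme rays carry $m$ pairwise distinct eigenvalues of the $3\times3$ matrix $A$, so $m\le3$; combined with $m\ge3$ this gives $m=3$. Eigenvectors attached to distinct eigenvalues are linearly independent, so the three extreme rays are independent and $C$ is simple. The main obstacle is the number-theoretic step isolating $\lambda=1$ from a repeated eigenvalue: it is precisely here that integrality of $A$, unimodularity ($\det A=\pm1$), and the positivity of the eigenvalues on the extreme rays must be used together, while the remaining steps are bookkeeping about eigenspaces and the pointedness of $C$.
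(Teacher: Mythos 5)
Your proof is correct, but it takes a genuinely different route from the paper's. The paper argues by contradiction: a non-simple pointed cone in $\RR^3$ has at least four extreme rays; by pigeonhole two of them share an eigenvalue of the matrix $A$ supplied by \Cref{thm:polyhedral-necessary}, so that eigenspace is at least $2$-dimensional; since an extreme ray of a pointed cone cannot lie in the span of two others, the remaining two rays are confined to the other eigenspaces, whose dimensions sum to at most $1$, making them collinear --- contradicting that they are distinct extreme rays. That argument is purely linear-algebraic and geometric, and never needs to identify the shared eigenvalue. You instead make the arithmetic do the work: a repeated eigenvalue of a $3\times3$ unimodular integer matrix carried by two independent extreme rays must be rational (otherwise its minimal polynomial, of degree $\ge 2$, would divide $p_A$ with exponent $\ge 2$, forcing $\deg p_A\ge 4$), hence equal to $1$ by the rational root theorem with $\det A=\pm1$ and positivity; then $\det A=\pm1$ and positivity of the eigenvalues on extreme rays kill both the case $A=I$ and the spectrum $1,1,\mu$. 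What your route buys: it proves the strictly stronger statement that the eigenvalues attached to the extreme rays are pairwise distinct, which is exactly the extra ingredient the paper must re-derive later (by a separate case analysis on rationality of the eigenvalues) in the proof of \Cref{thm:main3d}; your claim could be reused there. Two minor repairs: you assert that $C$ is full-dimensional, which is not among the hypotheses --- add the observation that a lower-dimensional pointed cone in $\RR^3$ has at most two extreme rays and is trivially simple, so the full-dimensional case is the only one at issue; and note that \Cref{lemma:RationalEigenvalue} as stated assumes a rational eigen\emph{vector} rather than a rational eigen\emph{value}, so your inline proof of the $\lambda=1$ step is indeed necessary rather than a citation, and it is carried out correctly.
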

\begin{proof}
    Suppose that $C$ is not simple.
    Then we can find at least four extreme rays $[u_1],\dots,[u_4]$ of $C$.
    By Theorem~\ref{thm:polyhedral-necessary}, $u_1,\dots,u_4$ must be eigenvectors of a non-identity matrix $A\in\GL(3,\ZZ)$.
    Two of them, say $u_1$ and $u_2$, must share the same eigenvalue.
    Then $u_3$ or $u_4$ cannot lie in $\operatorname{span}\{u_1,u_2\}$ because they define extreme rays.
    As the sum of geometric multiplicities being at most 3, $u_3$ and $u_4$ must be collinear, which is a contradiction to them being extreme rays.
\end{proof}

Given a polyhedral cone $C\subset\RR^n$ with extreme rays $[u_1],\dots,[u_m]$, let $H_C\subset\GL(n,\ZZ)$ be the set of unimodular matrices that have $u_1,\dots,u_m$ as eigenvectors with positive eigenvalues.
Here, the eigenvalues corresponding to rational extreme rays must be 1 by~\Cref{lemma:RationalEigenvalue}.
For any group $G\subset\GL(n,\ZZ)$ fixing the polyhedral cone $C$, $H:=G\cap H_C$ is a normal subgroup of $G$ and determines whether $C$ is $(R,G)$-finitely generated for some finite $R\subset S_C$.
\begin{lemma}\label{lemma:NormalSubgroup}
    Let $C\subset\RR^n$ be a polyhedral cone.
    If $S_C$ is $(R,G)$-finitely generated, then it is also $(R_H,H)$-finitely generated for the normal subgroup $H:= G\cap H_C$ and some finite $R_H\subset S_C$.
\end{lemma}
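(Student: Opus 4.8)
The plan is to realize $H$ as the kernel of a homomorphism to a finite group, and then to reduce $G$-generation of $S_C$ to $H$-generation by a coset argument. First I would record the action of $G$ on extreme rays: by \Cref{lem:extreme_rays}, every $T\in G$ sends extreme rays to extreme rays, hence permutes the finite set $E=\{[u_1],\dots,[u_m]\}$, giving a homomorphism $\phi\colon G\to\operatorname{Sym}(E)\cong S_m$. An element $T\in G$ fixes every ray $[u_i]$ exactly when $\phi(T)=\operatorname{id}$; since each ray is a positive half-line, fixing $[u_i]$ means $T\cdot u_i=\lambda_i u_i$ with $\lambda_i>0$, which is precisely the defining condition for membership in $H_C$. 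Hence $H=G\cap H_C=\ker\phi$, so $H$ is automatically normal in $G$ and $G/H\cong\phi(G)\le S_m$ is finite; writing $k:=[G:H]$, we have $k\le m!$.

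Next I would build $R_H$ from coset representatives. Choose $g_1,\dots,g_k\in G$ with $G=\bigsqcup_{j=1}^{k}Hg_j$, and set $R_H:=\bigcup_{j=1}^{k}g_j\cdot R$. Since $R$ is finite and $G\cdot S_C=S_C$ forces $g_j\cdot r\in S_C$ for each $r\in R$, the set $R_H$ is a finite subset of $S_C$. The key identity is $G\cdot R=H\cdot R_H$: writing any $T\in G$ as $T=hg_j$ with $h\in H$, the left action gives $T\cdot r=h\cdot(g_j\cdot r)\in H\cdot R_H$, and the reverse inclusion is immediate. Thus the (possibly infinite) generating orbit $G\cdot R$ is recovered as the $H$-orbit of the finite seed set $R_H$.

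To finish, I would take any $s\in S_C$, use $(R,G)$-finite generation to write $s=\sum_{i\in K}\lambda_i\,(T_i\cdot r_i)$ with each $T_i\cdot r_i\in G\cdot R=H\cdot R_H$, and rewrite each summand as $h_i\cdot r_i'$ with $h_i\in H$ and $r_i'\in R_H$; this is exactly a representation of the form demanded in \Cref{def:RG-FG} for the pair $(R_H,H)$. The invariance hypotheses transfer for free: since $H\subseteq G$, each $T\in H$ already satisfies $T\cdot C=C$ and $T\cdot S_C=S_C$ (as established at the start of \Cref{sec:properties}), whence $H\cdot C=C$ and $H\cdot S_C=S_C$, completing the verification that $S_C$ is $(R_H,H)$-finitely generated.

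The argument is essentially bookkeeping once finiteness of $[G:H]$ is in hand, and that finiteness comes for free from $|S_m|<\infty$. The only point demanding real care is the choice of right versus left cosets: decomposing $G=\bigsqcup_j Hg_j$ (rather than $\bigsqcup_j g_jH$) is what makes $T\cdot r=h\cdot(g_j\cdot r)$ land in $H\cdot R_H$, so that it is $g_j\cdot r$ — and not $r$ — that gets absorbed into the finite seed set. I would also state the identification $G\cap H_C=\ker\phi$ cleanly, since it is precisely what justifies naming $H$ the normal subgroup in the statement.
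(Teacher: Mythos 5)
Your proof is correct and follows essentially the same route as the paper's: both establish that $H$ has finite index in $G$ via the permutation action on the finitely many extreme rays (your kernel-of-$\phi$ packaging is just a cleaner phrasing of the paper's direct verification that $B^{-1}AB\in H$ and that cosets correspond to permutations), and both then take $R_H$ to be the union of coset representatives applied to $R$ and rewrite each term $T_i\cdot r_i$ via the decomposition $T_i=h_ig_{j(i)}$ with $h_i\in H$.
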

\begin{proof}
    We first note that $H\subseteq G$ is a normal subgroup and each coset of $G/H$ corresponds to a permutation of the extreme rays $[u_1],\dots,[u_m]$ of $C$.
    To see this, take any $A\in H$ and $B\in G$.
    For each extreme ray $[u_i]$, $i=1,\dots,m$, of $C$, by Lemma~\ref{lem:extreme_rays}, there exists $\mu_i>0$ such that $Bu_i=\mu_iu_j$ for some $j=1,\dots,m$.
    By assumption, $Au_j=\lambda_ju_j$ for some $\lambda_j>0$, which implies that $B^{-1}ABu_i=\mu_i^{-1}\lambda_j\mu_iu_i=\lambda_ju_i$ and thus $B^{-1}AB\in H$, proving the normality of $H$.
    Now suppose $B,B'\in G$ induce the same permutation on the set of extreme rays, then for any $u_i$, $B^{-1}B'[u_i]=[u_i]$ and thus $B^{-1}B'\in H$.
    This shows that each coset of $H$ in $G$ corresponds to a permutation of extreme rays, which further shows that $G/H$ is finite.

    Now suppose $S_C$ is $(R,G)$-finitely generated.
    Pick $A_1,\dots,A_k\in G$ representing distinct cosets of $H$ in $G$, for some $k\in\ZZ_{\ge0}$, and define $R_H:=\cup_{i=1}^{k}A_iR$.
    For any $s\in S_C$, by definition, there exists $B_1,\dots,B_l\in G$, $u_1,\dots,u_l\in R$, and $\lambda_1,\dots,\lambda_l\in\ZZ_{\ge0}$, such that $s=\sum_{j=1}^{l}\lambda_jB_ju_j$.
    Since $B_j=B'_jA_{i(j)}$ for some $i(j)\in\{1,\dots,k\}$ and some $B'_j\in H$, the above sum can be rewritten as
    \[
        s =\sum_{j=1}^{l}\lambda_jB'_j(A_{i(j)}u_j),
    \]
    completing the proof as each $A_{i(j)}u_j\in R_H$.
\end{proof}

The above lemma allows us to reduce the question of identifying groups $G$ for $(R,G)$-finite generation to subgroups of $H_C$, i.e., unimodular matrices for which the extreme rays of $C$ are eigenvectors with positive eigenvalues.
Next we will study this group by associating it with the algebraic number fields of the extreme rays, and propose a sufficient condition for $(R,G)$-finite generation of simple polyhedral cones.

\subsection{Sufficient condition for simple polyhedral cones}

Given a simple polyhedral cone $C\subset\RR^n$ with linearly independent extreme rays $[u_1],\dots,[u_n]$, let $E_C\subset\RR^{n\times n}$ denote the subspace of matrices that have $u_1,\dots,u_n$ as its eigenvectors.
Our plan to prove~\Cref{thm:simple-sufficient} consists of the following two steps:
\begin{enumerate}
    \item use the existence of a matrix satisfying the sufficient condition to show that there are sufficiently many matrices in $H_C$, and
    \item leverage the difference of the eigenvalues of matrices in $H_C$ to show that all integer points in $C$ can be generated by a finite subset.
\end{enumerate}
To begin with, we show that the sufficient condition is equivalent to the subspace $E_C$ being rational.
For notational convenience, we denote $E_C(\QQ):= E_C\cap\QQ^{n\times n}$ as the rational matrices in $E_C$.
Recall that a \emph{primary rational canonical form} of a rational matrix $A\in\QQ^{n\times n}$ is a block-diagonal matrix $B\in\QQ^{n\times n}$ that is similar to $A$ over $\QQ$, where each block is a companion matrix corresponding to an irreducible factor of the characteristic polynomial of $A$~\cite[Chapter VII, Corollary 4.7(ii)]{hungerford2012algebra}.

\begin{lemma}\label{lemma:RationalSubspace}
    Suppose $C$ is a simple cone with extreme rays $[u_1],\dots,[u_n]$ where $[u_1],\dots,[u_k]$ are rational.
    The following three conditions are equivalent.
    \begin{enumerate}
        \item The subspace $E_C$ is rational, i.e., $\dim_\QQ(E_C(\QQ))=n$.
        \item There exist $A\in E_C(\QQ)$ and $\lambda_1,\dots,\lambda_n\in\RR_{>0}$ such that $Au_i=\lambda_i u_i$ for each $i=1,\dots,n$ and $\lambda_{k+1},\dots,\lambda_n$ are distinct.
        \item There exist $A\in E_C(\QQ)$ and all distinct $\lambda_1,\dots,\lambda_n\in\RR_{>0}$ such that $Au_i=\lambda_i u_i$ for each $i=1,\dots,n$.
    \end{enumerate}
\end{lemma}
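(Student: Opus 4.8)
The plan is to set up the natural linear-algebraic dictionary and then prove the cycle (3)$\Rightarrow$(2)$\Rightarrow$(1)$\Rightarrow$(3). Write $U=(u_1\mid\cdots\mid u_n)$, which is invertible because $C$ is simple. A matrix $B$ has $u_1,\dots,u_n$ as eigenvectors exactly when $U^{-1}BU$ is diagonal, so $E_C=U\,\mathcal{D}\,U^{-1}$ with $\mathcal{D}$ the space of diagonal matrices; in particular $\dim_\RR E_C=n$, and the eigenvalue map $\phi\colon E_C\to\RR^n$, $\phi(B)=(\lambda_1,\dots,\lambda_n)$ where $Bu_i=\lambda_i u_i$, is a linear isomorphism. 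The one fact I will use repeatedly is that a finite set of rational matrices is $\QQ$-linearly independent iff it is $\RR$-linearly independent; hence $\dim_\QQ E_C(\QQ)=\dim_\RR\operatorname{span}_\RR E_C(\QQ)\le n$, and condition (1) is precisely the statement that $E_C$ is defined over $\QQ$, i.e.\ spanned by its rational points. Note also that for any $A\in E_C(\QQ)$ the whole algebra $\QQ[A]$ lies in $E_C(\QQ)$, with $\phi(A^j)=(\lambda_1^j,\dots,\lambda_n^j)$, and since $A$ is diagonalizable, $\dim_\QQ\QQ[A]$ equals the number of distinct eigenvalues of $A$.

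The implication (3)$\Rightarrow$(2) is immediate. For (1)$\Rightarrow$(3): if $E_C$ is defined over $\QQ$ then $E_C(\QQ)$ is dense in $E_C$ (choose a rational basis, which is then an $\RR$-basis, and observe that $\QQ$-combinations are dense among $\RR$-combinations), so $\phi(E_C(\QQ))$ is dense in $\RR^n$; the set of vectors with pairwise distinct strictly positive coordinates is open and nonempty, hence meets $\phi(E_C(\QQ))$, producing a rational $A$ with all eigenvalues distinct and positive. This settles two of the three arrows in the cycle.

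The crux is (2)$\Rightarrow$(1), which I would prove by Galois descent on the eigenspace decomposition. Fix $A$ as in (2) and a finite Galois extension $L/\QQ$ containing all $\lambda_i$ and all entries of the $u_i$, with group $\Gamma$. Since $A$ is rational, $\sigma(A)=A$ for every $\sigma\in\Gamma$, and applying $\sigma$ to $Au_i=\lambda_i u_i$ gives $A\,\sigma(u_i)=\sigma(\lambda_i)\,\sigma(u_i)$, so the Galois action carries eigenpairs of $A$ to eigenpairs of $A$. The key observation is that every \emph{irrational} eigenvalue has a one-dimensional eigenspace: that eigenspace is the span of those $u_l$ whose eigenvalue equals it, no such $u_l$ can be rational (a rational eigenvector forces a rational eigenvalue), and by the distinctness hypothesis on $\lambda_{k+1},\dots,\lambda_n$ at most one irrational ray qualifies. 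Consequently $\Gamma$ permutes the rays carrying irrational eigenvalues, these eigenvalues group into full Galois orbits $O$, and on each rational invariant subspace $W_O=\bigoplus_{\mu\in O}\ker(A-\mu I)$ the restriction of $\QQ[A]$ is the regular representation of the number field $\QQ(\mu)$, whose $\QQ$-dimension $\lvert O\rvert$ matches $\dim_\RR E_C|_{W_O}$. On the part spanned by rational rays the eigenvalues are rational and the spanning rational eigenvectors let me adjoin rational matrices splitting any repeated eigenvalue; collecting one such block for each orbit and each rational eigenspace yields $n$ $\QQ$-linearly independent matrices in $E_C$, which is (1).

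The step I expect to be the main obstacle is controlling eigenvalues that are simultaneously attached to an irrational extreme ray and rational as numbers: such a ray can a priori sit inside a higher-dimensional rational eigenspace shared with rational rays, and it is exactly the distinctness hypothesis on $\lambda_{k+1},\dots,\lambda_n$ (together with the one-dimensionality argument above) that must be leveraged to rule out an eigenspace that cannot be resolved over $\QQ$. Making the Galois-orbit and idempotent bookkeeping of $\QQ[A]$ fit together with the splitting of the rational eigenspaces, so that the block dimensions sum to exactly $n$ over $\QQ$, is the technical heart of the argument, and the delicate case to watch is precisely an irrational ray whose eigenvalue is nonetheless rational.
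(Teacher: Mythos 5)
Your implications (3)$\Rightarrow$(2) and (1)$\Rightarrow$(3) are correct and essentially the same as the paper's (its proof of (1)$\Rightarrow$(2) is the identical density argument, and in fact already produces pairwise distinct eigenvalues). Where you genuinely diverge is the crux implication: you propose (2)$\Rightarrow$(1) directly, by Galois descent on the eigenspace decomposition, whereas the paper goes (2)$\Rightarrow$(3) by putting $A$ into primary rational canonical form and perturbing the rational $1\times1$ blocks by small rational $\epsilon_j$, and then (3)$\Rightarrow$(1) by noting that $I,A,\dots,A^{n-1}$ are $\QQ$-linearly independent via a Vandermonde determinant. Your orbit/idempotent count ($k$ rational projections for the rational rays plus $\sum_O\vert O\vert=n-k$ dimensions coming from $\QQ[A]$ on the Galois-orbit blocks) is a viable alternative and arguably explains structurally why the lemma holds, but as written it is a sketch with a hole at the decisive point.

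The genuine gap is exactly the case you flag and postpone: an irrational ray $u_l$, $l>k$, whose eigenvalue $\lambda_l$ is a rational number. You say the distinctness hypothesis ``must be leveraged to rule out'' this case, but under the hypothesis as literally stated it cannot be ruled out, because $\lambda_l$ may coincide with the eigenvalue of a rational ray. Concretely, take $n=2$, $k=1$, $u_1=(1,0)$, $u_2=(1,\sqrt2)$, and $A=I_2$: condition (2) holds (the distinctness requirement on the single eigenvalue $\lambda_2$ is vacuous), the eigenspace of the rational eigenvalue $1$ is all of $\RR^2$ and contains the irrational ray, yet $E_C(\QQ)=\QQ\cdot I_2$ has dimension $1$, so (1) fails; with this literal reading the implication (2)$\Rightarrow$(1) is false and no bookkeeping can rescue it. What saves the lemma is reading ``$\lambda_{k+1},\dots,\lambda_n$ distinct'' as: each is a \emph{simple} eigenvalue of $A$, i.e.\ also distinct from $\lambda_1,\dots,\lambda_k$. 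Under that reading your own one-dimensionality observation closes the delicate case in one line: the eigenspace of a rational $\lambda_l$ with $l>k$ would be the one-dimensional rational subspace $\ker(A-\lambda_l I)$, hence spanned by a rational vector, contradicting the irrationality of $[u_l]$; so all of $\lambda_{k+1},\dots,\lambda_n$ are irrational and your orbit decomposition goes through. You need to make this reduction explicit rather than defer it. (For what it is worth, the paper's proof of (2)$\Rightarrow$(3) glosses over the same point: it asserts that the primary rational canonical form of $A$ has exactly $k$ rational $1\times1$ blocks matching $u_1,\dots,u_k$, which presumes precisely that no irrational ray carries a rational eigenvalue.)
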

\begin{proof}
    For 1$\implies$2, consider the linear map $\mu\in\RR^n\mapsto U\Diag(\mu)U^{-1}$, where $U=(u_1,\dots,u_n)\in\RR^{n\times n}$ and $\Diag(\mu)$ is the diagonal matrix constructed from $\mu$.
    By definition of eigenvectors, this is an isomorphism between $\RR^n$ and $E_C$.
    Thus if $E_C$ is rational, all vectors $\mu$ such that $U\Diag(\mu)U^{-1}\in\QQ^{n\times n}$ form a dense subset in $\RR^n$, from which we conclude that there exists $\lambda\in\RR^n_{>0}$ with distinct components such that $A=U\Diag(\lambda)U^{-1}\in\QQ^{n\times n}$.

    For 2$\implies$3, consider the \emph{primary rational canonical form} of $A$: there exists an invertible matrix $V=(v_1,\dots,v_n)\in\QQ^{n\times n}$ such that $A=VBV^{-1}$, where $B\in\QQ^{n\times n}$ is a block-diagonal matrix with the top-left $k\times k$ submatrix being a diagonal matrix consisting of the eigenvalues $\lambda_1,\dots,\lambda_k\in\QQ$.
    Thus $v_j=u_j\in\QQ^n$ for $j=1,\dots,k$, and $\operatorname{span}_\RR\{v_{k+1},\dots,v_n\}=\operatorname{span}_\RR\{u_{k+1},\dots,u_n\}=:L\subset\RR^n$.
    A perturbation $B':=B+\Diag(\epsilon_1,\dots,\epsilon_k,0,\dots,0)$ by some sufficiently small $\epsilon_1,\dots,\epsilon_k\in\QQ$ gives a matrix $A':=VB'V^{-1}\in\QQ^{n\times n}$ with all distinct eigenvalues. 
    The matrix $A'\in E_C$ because for any $j=1,\dots,k$, $A'u_j=A'v_j=(\lambda_j+\epsilon_j)u_j$ and its restriction to the subspace $L$ satisfies $A'\vert_L=A\vert_L$ so $A'u_j=\lambda_ju_j$ for each $j=k+1,\dots,n$.
    
    For 3$\implies$1, consider the subspace $H$ spanned by matrices $I,A,A^2,\dots,A^{n-1}$ over $\QQ$.
    Clearly $H\subseteq E_C(\QQ)$ so it suffices to show that $\dim_\QQ(H)=n$.
    Assume for contradiction that there exist $c_0,\dots,c_{n-1}\in\QQ$ that are not all zero, such that $\sum_{i=0}^{n-1}c_i A^i=0$.
    This implies that $\sum_{i=0}^{n-1}c_i\lambda_j^i=0$ for each $j=1,\dots,n$, which is a contradiction because the Vandermonde matrix
    \begin{equation*}
        \begin{pmatrix}
            1 & \lambda_1 & \lambda_1^2 & \cdots & \lambda_1^{n-1} \\
            1 & \lambda_2 & \lambda_2^2 & \cdots & \lambda_2^{n-1} \\
            \vdots & & & & \vdots \\
            1 & \lambda_n & \lambda_n^2 & \cdots & \lambda_n^{n-1} \\
        \end{pmatrix}
    \end{equation*}
    has full rank when $\lambda_1,\dots,\lambda_n$ are distinct.
\end{proof}

We thus call any matrix with all distinct eigenvalues (condition 3 in~\Cref{lemma:RationalSubspace}) in $E_C(\QQ)$ a \emph{generating matrix}.
The remainder of the first step of the proof is based on the following Dirichlet unit theorem, the proof of which follows from Proposition 6.10 of~\cite{jordan2019analytic}.
Here, we will take the product field $K$ to be $E_C(\QQ)$, the order $\mathcal{O}$ to be a subring generated by a generating matrix over $\ZZ$, and then the unit group $\mathcal{O}^\times$ would correspond exactly to group of unimodular matrices $H_C$ in our context.
\begin{proposition}\label{prop:DirichletUnitTheorem}
    Let $K=K_1\times\cdots\times K_m$ be a product of number fields $K_1,\dots,K_m$, and $\mathcal{O}\subseteq K$ be an order, i.e., a subring of $K$ finitely generated as a $\ZZ$-module such that $\QQ\mathcal{O}=K$.
    Suppose $K\otimes\RR\cong\RR^{r}$.
    Then the unit group $\mathcal{O}^\times$ is a finitely generated Abelian group of rank $r-m$.
\end{proposition}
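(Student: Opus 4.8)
The plan is to reduce to the classical (single-field) Dirichlet unit theorem applied to each factor $K_i$, so that the only genuinely new work is to compare $\mathcal{O}^\times$ with the product of the unit groups of the orders that $\mathcal{O}$ induces on the factors.

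First I would extract the arithmetic meaning of the hypothesis. Setting $V:=K\otimes_\QQ\RR$, an $\RR$-algebra isomorphism $V\cong\RR^r$ means $V$ has no complex factor; since $K_i\otimes_\QQ\RR\cong\RR^{r_1^{(i)}}\times\CC^{r_2^{(i)}}$, this forces $r_2^{(i)}=0$, i.e.\ every $K_i$ is totally real, with $r=\sum_{i=1}^m n_i$ where $n_i:=\dim_\QQ K_i$. Under the identification $V\cong\RR^r$ the coordinates split into blocks $S_1,\dots,S_m$, with $S_i$ indexing the $n_i$ real embeddings of $K_i$; the global norm is $N(x)=\prod_{j=1}^r x_j$ and the factor norm $N_{K_i}$ is $\prod_{j\in S_i}x_j$. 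I would also record the standard fact that for an order $u\in\mathcal{O}^\times$ if and only if $u\in\mathcal{O}$ and $N(u)=\pm1$, which one proves by applying Cramer's rule to the integer matrix of multiplication-by-$u$ in a $\ZZ$-basis of $\mathcal{O}$.

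Next, let $\pi_i\colon K\to K_i$ be the projections and put $\mathcal{O}_i:=\pi_i(\mathcal{O})$. Each $\mathcal{O}_i$ is an order in $K_i$, and $\mathcal{O}\subseteq\prod_{i=1}^m\mathcal{O}_i$ as full $\ZZ$-lattices of rank $r$ in $V$, hence with finite index. Applying $\pi_i$ to $u$ and to $u^{-1}$ shows $\mathcal{O}^\times\subseteq\prod_i\mathcal{O}_i^\times$. To see this inclusion has finite index I would use the conductor $\mathfrak{f}:=\{x\in\prod_i\mathcal{O}_i:x\cdot\prod_i\mathcal{O}_i\subseteq\mathcal{O}\}$, a nonzero ideal of $\prod_i\mathcal{O}_i$ contained in $\mathcal{O}$, and verify that any $u\in\prod_i\mathcal{O}_i^\times$ with $u\equiv1\pmod{\mathfrak{f}}$ already lies in $\mathcal{O}^\times$: indeed $u-1\in\mathfrak{f}\subseteq\mathcal{O}$ gives $u\in\mathcal{O}$, and $u^{-1}-1=-u^{-1}(u-1)\in\mathfrak{f}\subseteq\mathcal{O}$ gives $u^{-1}\in\mathcal{O}$. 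Since $\prod_i\mathcal{O}_i/\mathfrak{f}$ is finite, the subgroup of units congruent to $1$ modulo $\mathfrak{f}$ has finite index in $\prod_i\mathcal{O}_i^\times$ and is contained in $\mathcal{O}^\times$, so $\mathcal{O}^\times$ itself is of finite index and therefore of the same rank.

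Finally, the classical Dirichlet unit theorem applied to the totally real field $K_i$ gives that $\mathcal{O}_i^\times$ is finitely generated abelian of rank $r_1^{(i)}+r_2^{(i)}-1=n_i-1$ (torsion $=\{\pm1\}$). Summing, $\prod_i\mathcal{O}_i^\times$, and hence $\mathcal{O}^\times$, has rank $\sum_{i=1}^m(n_i-1)=r-m$, as claimed. The one genuinely deep ingredient is hidden inside the single-field theorem: the assertion that the logarithmic image of the units is a lattice of full dimension, which rests on Minkowski's convex-body theorem to produce units realizing enough sign patterns. In the reduction above this input is imported wholesale, so the only new step is the finite-index comparison via the conductor. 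Alternatively one could argue directly on $\mathcal{O}$, mapping $u\mapsto(\log|x_j(u)|)_{j=1}^r$ into the subspace $\{y\in\RR^r:\sum_{j\in S_i}y_j=0,\ i=1,\dots,m\}$ of dimension $r-m$, showing the kernel is finite (roots of unity) and the image discrete by properness, and then re-proving fullness by Minkowski in the product setting — which is exactly the step I expect to be the main obstacle if one avoids the single-field theorem.
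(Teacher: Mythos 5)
Your argument is correct, but it is genuinely different from what the paper does: the paper offers no proof of this proposition at all, deferring it entirely to Proposition 6.10 of \cite{jordan2019analytic}, where the product-of-fields version is proved directly (via the logarithmic embedding of $\mathcal{O}^\times$ into the trace-zero-type subspace and a geometry-of-numbers argument). You instead reduce to the classical single-field Dirichlet unit theorem: you correctly read the hypothesis $K\otimes\RR\cong\RR^r$ as an $\RR$-algebra isomorphism forcing each $K_i$ to be totally real with $r=\sum_i n_i$; you project $\mathcal{O}$ to orders $\mathcal{O}_i:=\pi_i(\mathcal{O})\subseteq K_i$, get $\mathcal{O}^\times\subseteq\prod_i\mathcal{O}_i^\times$, and use the conductor $\mathfrak{f}$ to show the congruence subgroup $\{u\equiv 1\pmod{\mathfrak{f}}\}$ of $\prod_i\mathcal{O}_i^\times$ lands inside $\mathcal{O}^\times$, so the inclusion has finite index and the ranks agree, giving $\sum_i(n_i-1)=r-m$. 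All of this checks out. Two small points you should make explicit: (i) $\mathfrak{f}\neq 0$ because the finite index $N=[\prod_i\mathcal{O}_i:\mathcal{O}]$ satisfies $N\prod_i\mathcal{O}_i\subseteq\mathcal{O}$, hence $N\in\mathfrak{f}$; and (ii) the ``classical'' Dirichlet unit theorem is usually stated for the maximal order $\mathcal{O}_{K_i}$, and $\mathcal{O}_i$ need not be maximal --- but this is closed either by citing the standard version for orders or by running your conductor argument once more for the inclusion $\mathcal{O}_i\subseteq\mathcal{O}_{K_i}$, so it is a presentational gap, not a mathematical one. As for what each route buys: the paper's citation imports the result in exactly the generality needed (and that reference also covers complex places), whereas your reduction makes the proposition self-contained modulo the most standard textbook theorem, which is arguably better suited to the paper's purpose since only the rank count $r-m$ is ever used.
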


Next we set up the second step of the proof for~\Cref{thm:simple-sufficient}.
We introduce the following notation that can be used to characterize the size of $H_C$ for a simple cone $C\subset\RR^n$.
Fix the order of the extreme ray vectors $[u_1],\dots,[u_n]$. 
Then for any $A\in H_C$, we use $\lambda_{C}(A)\in\RR_{>0}^n$ to denote the vector of eigenvalues corresponding to $u_1,\dots,u_n$.
Any subgroup $G\subset H_C$ then acts on $C$ through componentwise multiplication of eigenvectors, i.e., for any $A\in G$ and $x=\sum_{i=1}^{n}x_iu_i$, $(x_1,\dots,x_n)\in\RR_{\ge0}^n$, $Ax=\sum_{i=1}^{n}x_i(Au_i)=\sum_{i=1}^{n}\lambda_C(A)_i x_iu_i$.
To describe the relation among elements of $G$, we say $A_1,\dots,A_k$ have \emph{log-linearly independent} eigenvalues for some $k\in\ZZ_{\ge1}$ if the componentwise logarithm vectors $\log(\lambda_C(A_1)),\dots,\log(\lambda_C(A_k))\in\RR^n$ are linearly independent.

\begin{lemma}\label{lemma:HighDimBalancing}
    Let $C\subset\RR^n$ be a simple polyhedral cone and $F\subseteq C$ be a $d$-dimensional face of $C$ with $d\le n$.
    Suppose $G\subseteq H_C$ a group that contains $d-1$ log-linearly independent matrices $A_1,\dots,A_{d-1}$ such that $A_iu=u$ for any extreme ray $u\in C\setminus F$. 
    Then there exists a rational polyhedral cone $P\subset F$ such that for any $x\in F\cap\ZZ^n$, there exists $A\in G$, $Ax\in P$.
\end{lemma}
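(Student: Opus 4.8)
The plan is to work in the eigen-coordinates supplied by the extreme rays of $F$. After relabeling, let $F$ be the face spanned by $u_1,\dots,u_d$, so that $u_{d+1},\dots,u_n$ are precisely the extreme rays of $C\setminus F$, which each $A_i$ fixes. In these coordinates every $A\in G$ acts on $F$ diagonally: writing $x=\sum_{j=1}^d x_ju_j$ with $x_j\ge0$, we have $Ax=\sum_{j=1}^d\lambda_C(A)_j\,x_ju_j$. The first thing I would record is that unimodularity forces $\prod_{j=1}^n\lambda_C(A)_j=\det(A)=1$, since $u_1,\dots,u_n$ form an eigenbasis, all eigenvalues are positive, and $\lvert\det(A)\rvert=1$. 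Because $\lambda_C(A_i)_j=1$ for $j>d$, this yields $\sum_{j=1}^d\log\lambda_C(A_i)_j=0$ for each $i$, so the vectors $\ell_i:=(\log\lambda_C(A_i)_1,\dots,\log\lambda_C(A_i)_d)$ all lie in the hyperplane $V_0:=\{v\in\RR^d:\sum_j v_j=0\}$, of dimension $d-1$.

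Next I would exploit the log-linear independence hypothesis. Since the coordinates of $\log\lambda_C(A_i)$ beyond index $d$ vanish, log-linear independence of $A_1,\dots,A_{d-1}$ is exactly linear independence of $\ell_1,\dots,\ell_{d-1}$ in $\RR^d$; together with $\ell_i\in V_0$ and $\dim V_0=d-1$, they form a basis of $V_0$. Hence $\Lambda:=\ZZ\ell_1+\cdots+\ZZ\ell_{d-1}$ is a full-rank lattice in $V_0$, and I fix a bounded fundamental domain $D$ for it. On $\operatorname{relint}(F)$ define a log-ratio map sending $x=\sum_{j=1}^d x_ju_j$ (all $x_j>0$) to the image of $(\log x_1,\dots,\log x_d)$ in $\RR^d/\RR\mathbf 1\cong V_0$. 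Applying $A=\prod_i A_i^{m_i}$ translates this image by $\sum_i m_i\ell_i\in\Lambda$, so for every interior integer point $x$ there is an $A\in G$ whose log-ratio image lies in $D$; boundedness of $D$ then means all ratios $x_i/x_j$ of the balanced point $Ax$ are pinched between two positive constants.

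Finally I would convert this balanced region into a rational polyhedral cone. The set $Q$ of points of $\operatorname{relint}(F)$ with log-ratio in $D$ is a cone with compact cross-section sitting strictly inside $\operatorname{relint}(F)$, since bounded ratios keep it away from every facet of $F$. To enclose it rationally I would pass to the subspace $W:=\operatorname{span}_\RR(\operatorname{span}(F)\cap\ZZ^n)$, which contains every integer point of $F$, is rational by construction, and is $G$-invariant because each $A_i\in\GL(n,\ZZ)$ preserves both $\ZZ^n$ and $F$. Inside $W$ rational points are dense and $\operatorname{relint}(F)\cap W$ is relatively open, so the compact cross-section of $\overline{Q}\cap W$ can be covered by the convex hull of finitely many rational points of $\operatorname{relint}(F)$; taking $P$ to be the cone over these points gives a rational polyhedral cone with $\overline{Q}\cap W\subseteq P\subseteq F$, which handles every integer point in the relative interior of $F$.

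The step I expect to be the main obstacle is handling integer points on the boundary of $F$, that is, those lying on a proper face $F'$ (some $x_j=0$), for which the balancing takes place in a lower-dimensional sum-zero hyperplane and the orbit never enters $\operatorname{relint}(F)$. I would address this by arranging $P$ so that each face $P\cap F'$ is itself a balanced rational subcone of $F'$, via an induction on $\dim F$ in which the restrictions of $A_1,\dots,A_{d-1}$ supply balancing directions on each facet, and then gluing these face-cones into a single rational polyhedral $P$. Guaranteeing simultaneously that $P$ is rational, contained in $F$, and met by every orbit (interior and boundary alike) is the delicate part; the algebraic input that makes it possible is exactly the identity $\prod_{j=1}^d\lambda_C(A_i)_j=1$, which ensures that $d-1$ log-linearly independent matrices already produce a \emph{full-rank} translation lattice in the relevant $(d-1)$-dimensional hyperplane.
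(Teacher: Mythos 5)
Your treatment of integer points in the relative interior of $F$ is correct and is essentially the paper's argument for the top-dimensional stratum: the identity $\prod_{j\le d}\lambda_C(A_i)_j=\det(A_i)=1$ places the vectors $\ell_i$ in the sum-zero hyperplane $V_0$, log-linear independence makes them a basis of $V_0$, and the resulting full-rank lattice lets you balance every interior integer point into a region of bounded coordinate ratios, which you then enclose in a rational polyhedral cone (the paper does the same, except that instead of quotienting by $\RR\mathbf{1}$ it adjoins the scaled identity $A_0=aI$ as an extra generator). However, the case you defer --- integer points lying on proper faces of $F$ --- is not a finishing detail; it is where the remaining content of the lemma lies, and your sketch for it does not go through as stated. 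A literal induction on $\dim F$ would require, for a facet $F'$ of $F$, some $\dim F'-1$ log-linearly independent matrices in $G$ fixing every extreme ray of $C\setminus F'$. Such matrices need not exist in $G$: they correspond to points of $\Lambda$ lying in the proper subspace $\{v\in V_0: v_j=0 \text{ whenever } u_j\in F\setminus F'\}$, and a full-rank lattice in $V_0$ will generically meet such a subspace only in $\{0\}$. So the lemma cannot simply be invoked one dimension down, and "gluing" face-cones that you have not yet constructed is circular.

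What actually works --- and is what the paper does --- is to run the same balancing construction on the relative interior $F_J$ of \emph{every} face of $F$ using the same matrices $A_1,\dots,A_{d-1}$: a point $x=\sum_{i\in J}x_iu_i$ with all $x_i>0$ only feels the eigenvalues indexed by $J$, so the relevant translation group on $(\log x_i)_{i\in J}$ is the coordinate projection of $\Lambda$ (together with the all-ones direction) to $\RR^J$. The saving fact, which your proposal never states, is that this projection spans: the image of $V_0\subset\RR^d$ under projection onto any $m<d$ coordinates is all of $\RR^m$, so the projected group is co-compact even though the hypotheses of the lemma are not inherited by the face. One must also note that a projected lattice need not be discrete --- it can be dense --- so the fundamental-domain argument has to be replaced by a compact set meeting every coset of the closure of the projected subgroup, which the spanning property provides. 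With a rational cone $P_J$ produced this way for each face satisfying $F_J\cap\QQ^n\neq\varnothing$, the paper concludes by taking $P:=\conv\bigl(\cup_J P_J\bigr)$, which is rational polyhedral and contained in $F$. Until you carry out this per-face argument (or an equivalent substitute), your proof covers only $\operatorname{relint}(F)\cap\ZZ^n$, which is a genuine gap.
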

\begin{proof}
    Let $u_1,\dots,u_d\in\RR^n$ denote the extreme rays of $F$.
    Take any subset $J\subset[n]:=\{1,\dots,n\}$ such that the cone $F_J:=\{\sum_{i\in J}c_iu_i:c_i>0,\,i\in J\}$, which is the relative interior of a face of $F$, satisfies $F_J\cap\QQ^n\neq\varnothing.$
    We claim that there exists a rational polyhedral cone $P_J\subset F_J$ such that for any $x\in F_J\cap\ZZ^n$, we can find $A\in G$ with $Ax\in P_J$.
    The assertion then follows by taking the convex hull of the union of such rational polyhedral cones $P:=\conv(\cup_{J\in 2^{[n]}}P_J)\subset F$.

    By assumption, let $A_1,\dots,A_{d-1}\in G$ be matrices such that $\log(\lambda_C(A_1)),\dots,\log(\lambda_C(A_{d-1}))\in\RR^n$ are linearly independent. 
    Moreover, let $A_0:=aI$ be a scaled identity matrix with $a>1$, so $\log(\lambda_C(A_0))=\log(a)(1,\dots,1)$ and thus perpendicular to $\log(\lambda_C(A_i))$ because $\det(A_i)=1$ for any $i=1,\dots,n-1$.
    Let $b_0,\dots,b_{d-1}$ be the restrictions of $\log(\lambda_C(A_0)),\dots,\log(\lambda_C(A_{d-1}))$ to their coordinates with indices in $J$, which by the definition above are linearly independent, so they generate a full dimensional lattice $L\subset\RR^m$, $m=|J|$.
    We denote the closure of the fundamental parallelepiped of $L$ as $B\subset\RR^{m}$.
    Since $B$ is compact, the image of $B$ under the continuous map $\eta:(t_i)_{i\in J}\mapsto\sum_{i\in J}\exp(t_i)u_i$ is also compact.
    Then both the set $\eta(B)\cap\QQ^n\subset F_J$ and its convex hull are bounded. 
    Thus by the denseness of rational points in the subspace $\operatorname{span}(F_J\cap\QQ^n)$, there exists a rational polyhedral cone $P_J\subset F_J$ such that $\eta(B)\cap\QQ^n\subset P_J$.
    This implies that for any $x=\sum_{i\in J}x_iu_i\in F_J\cap\ZZ^n$, there exists a vector $b=(\beta_i)_{i\in J}\in L$ such that $(\log(x_i)+\beta_i)_{i\in J}\in B$. 
    Consequently, there is a matrix $A\in G$ and $k\in\ZZ$ such that $a^kAx\in \eta(B)\cap\ZZ^n\subset P_J$, and thus $Ax\in P_J$. 
\end{proof}

An illustration of the set $\eta(B)$ in the above proof is presented in Figure~\ref{fig:mainfigure} (with construction in Example~\ref{ex:SOCSlice}).
We next extend the argument to partitions of extreme rays of the simple cone.
To simplify the notation, for a subset $J\subseteq[n]:=\{1,\dots,n\}$, let $H_C(J)\subset\GL(n,\ZZ)$ denote all unimodular matrices $A$ such that $Au_i=u_i$ for any $i\notin J$ (i.e., the eigenvalues of $A$ corresponding to $u_i$, $i\notin J$ are all 1).

\begin{lemma}\label{lemma:ReducibleBalancing}
    Let $C\subset\RR^n$ be a simple cone with extreme rays $[u_1],\dots,[u_n]$ for some $u_1,\dots,u_n\in\RR^n$.
    If there exists a partition $J_1,\dots,J_k$ of $[n]$ such that for each $i=1,\dots,k$, there are $l_i:=\vert J_i\vert-1$ matrices $A_{i1},\dots,A_{il_i}\in H_C(J_i)$, the vectors of eigenvalues of which $\lambda_C(A_{i1}),\dots,\lambda_C(A_{il_i})$ are log-linearly independent, then $S_C$ is $(R,G)$-finitely generated for the group $G\subset\GL(n,\ZZ)$ generated by $\cup_{i=1}^{k}\{A_{i1},\dots,A_{il_i}\}$.
\end{lemma}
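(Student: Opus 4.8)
The plan is to reduce to the single-block case already handled by \Cref{lemma:HighDimBalancing}, by showing that the blocks $J_1,\dots,J_k$ induce a \emph{rational} direct-sum decomposition of $\RR^n$, and then to correct for a finite-index lattice discrepancy with a finite set of extra generators. Write $G_i:=\langle A_{i1},\dots,A_{il_i}\rangle$ and $V_i:=\operatorname{span}_\RR\{u_l:l\in J_i\}$; each $A\in G_i$ fixes every $u_l$ with $l\notin J_i$, so the $G_i$ act on pairwise complementary blocks of coordinates and $G=\langle G_1,\dots,G_k\rangle$. First I would prove that every $V_i$ is rational and $\RR^n=\bigoplus_{i=1}^{k}V_i$. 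By \Cref{lemma:RationalEigenvalue} no non-singleton block can contain a rational ray (its $J_i$-eigenvalues would be forced to be $1$, contradicting log-linear independence), so each non-singleton block is irrational while the rational rays sit in singleton blocks. Since $\log\lambda_C(A_{i1}),\dots,\log\lambda_C(A_{il_i})$ are linearly independent, supported on $J_i$, and have coordinate sum $0$ (as $\det A_{ij}=1$), they span the whole hyperplane $\{v\in\RR^{J_i}:\sum_l v_l=0\}$, which lies in no coordinate hyperplane; hence some integer product $A=\prod_j A_{ij}^{m_j}\in G_i$ has all its $J_i$-eigenvalues different from $1$, so $\ker(A-I)=\bigoplus_{i'\neq i}V_{i'}$ is rational. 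Intersecting these rational subspaces over $i$ shows each $V_i$ is rational.

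Set $L_i:=\ZZ^n\cap V_i$ (a full-rank lattice in $V_i$) and $\Lambda_0:=\bigoplus_i L_i\subseteq\ZZ^n$, of finite index. Writing the block face $F_{J_i}:=C\cap V_i$, we have $C=F_{J_1}+\cdots+F_{J_k}$ and $\Lambda_0\cap C=\bigoplus_i(L_i\cap F_{J_i})$. For each $i$ I would apply \Cref{lemma:HighDimBalancing} inside $V_i$ to the $l_i=|J_i|-1$ log-linearly independent matrices $A_{ij}$, obtaining a rational polyhedral cone $P_i\subset F_{J_i}$ into which every point of $L_i\cap F_{J_i}$ is carried by some element of $G_i$; taking $R_i$ to be a Hilbert basis of $P_i$ in $L_i$ and pulling back through the balancing element gives $L_i\cap F_{J_i}\subseteq\langle G_i\cdot R_i\rangle$. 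Summing over blocks yields $\Lambda_0\cap C\subseteq\langle G\cdot R\rangle$ with $R:=\bigcup_i R_i$.

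It remains to reach all of $S_C=\ZZ^n\cap C$. The key structural fact is that integer points have \emph{block-closed support}: within a block the coordinates $(x_l)_{l\in J_i}$ of an integer point are Galois conjugates of a single algebraic number, so they vanish simultaneously, and the minimal face containing any $x\in S_C$ is therefore $F_T$ with $T=\bigcup_{i\in I}J_i$ a union of blocks, whose span $V_T=\bigoplus_{i\in I}V_i$ is rational. Thus $\Lambda_0\cap V_T$ is a full-rank lattice in $V_T$ with finite covering radius, so each $x\in F_T\cap\ZZ^n$ can be written $x=a+d$ with $a\in\Lambda_0\cap F_T$ and $d$ in a bounded set of integer points of $F_T$. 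Collecting these finitely many remainders over the finitely many union-of-block faces produces a finite $D\subseteq S_C$ with $S_C\subseteq D+(\Lambda_0\cap C)$, and then $R':=R\cup D$ together with $G$ witnesses $(R',G)$-finite generation.

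The hard part will be this final passage from $\Lambda_0\cap C$ to $S_C$. The tempting shortcut, balancing a whole integer point into one rational cone and using a single Hilbert basis, genuinely fails: points whose mass is spread across several blocks accumulate on the \emph{irrational} block faces, so their cone has irrational extreme rays and no finite Hilbert basis. What rescues the argument is exactly the rational splitting $\RR^n=\bigoplus_i V_i$ together with block-closed support of lattice points; the delicate points are to keep each remainder $d=x-a$ inside $C$ on the irrational faces and to bound the number of remainders uniformly across all faces.
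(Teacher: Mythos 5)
Your first two steps (rationality of the block spans $V_i$, and generating $\Lambda_0\cap C$ blockwise from Lemma~\ref{lemma:HighDimBalancing}) are reasonable, but the final passage from $\Lambda_0\cap C$ to $S_C$ --- the step you yourself flag as the hard part --- is not merely delicate: the claimed inclusion $S_C\subseteq D+(\Lambda_0\cap C)$ is \emph{false} for every finite $D$. Concretely, take $n=4$, $u_1=(1,\sqrt2,1,\sqrt2)$, $u_2=(1,-\sqrt2,1,-\sqrt2)$, $u_3=(1,\sqrt2,-1,-\sqrt2)$, $u_4=(1,-\sqrt2,-1,\sqrt2)$, with blocks $J_1=\{1,2\}$, $J_2=\{3,4\}$. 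Then $V_1=\{(a,b,a,b)\}$ and $V_2=\{(a,b,-a,-b)\}$ are rational, and the hypothesis holds: the map acting as the Pell matrix $\begin{pmatrix}3&2\\4&3\end{pmatrix}$ on $V_1$ (in the coordinates $(a,b)$) and as the identity on $V_2$ is an integral unimodular matrix $A_1\in H_C(J_1)$, and symmetrically one gets $A_2\in H_C(J_2)$. Write $(\gamma_x,\delta_x)$ for the $(u_3,u_4)$-coordinates of $x$; the product $\gamma_x\delta_x$ is invariant under $G=\langle A_1,A_2\rangle$. A direct computation gives: for $x\in\ZZ^4$, $\gamma_x\delta_x\in\tfrac1{32}\ZZ$, and it vanishes only if the whole $V_2$-component of $x$ vanishes; for $a\in\Lambda_0=L_1\oplus L_2$, $\gamma_a\delta_a\in\tfrac18\ZZ$. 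The point $x_0=(1,1,0,0)$ lies in the interior of $C$ with $\gamma_{x_0}\delta_{x_0}=\tfrac1{32}$, and $x_k:=A_2^kx_0\in S_C$ satisfies $\gamma_{x_k}\delta_{x_k}=\tfrac1{32}$ while $\delta_{x_k}=(3-2\sqrt2)^k\delta_{x_0}\to0$. Suppose $x_k=d+a$ with $d\in D$, $a\in\Lambda_0\cap C$; by simplicity $\delta_d+\delta_a=\delta_{x_k}$ with both terms nonnegative. If the $V_2$-component of $d$ is nonzero then $\gamma_d\delta_d\ge\tfrac1{32}$ with $\gamma_d$ bounded (since $D$ is finite), so $\delta_d$ is bounded below by a positive constant independent of $k$ --- impossible for large $k$. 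Hence the $V_2$-component of $d$ is zero and $a$ inherits the $(u_3,u_4)$-coordinates of $x_k$, whose product $\tfrac1{32}$ is not in $\tfrac18\ZZ$: contradiction. Note the lemma is \emph{true} for this cone (it satisfies Theorem~\ref{thm:simple-sufficient}); what fails is your intermediate claim: the coset of $x$ modulo $\Lambda_0$ is an obstruction that survives arbitrarily close to a block face, where no bounded correction inside $C$ can absorb it.

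The paper's proof is essentially the ``tempting shortcut'' you reject, and your objection to it does not apply. By induction on the number of blocks it produces rational polyhedral cones $P'\subset C'$, $P''\subset C''$, and for $x=x'+x''$ it picks a balancing element $A'\in G'$ for $x'$ and $A''\in\langle A_{k1},\dots,A_{kl_k}\rangle$ for $x''$; since $A'$ fixes $x''$ and $A''$ fixes $x'$, the single element $A=A'A''\in G$ carries $x$ into the rational polyhedral cone $P=\conv(P'\cup P'')$, whose Hilbert basis serves as $R$. The ``accumulation on irrational block faces'' you worry about is exactly what the per-block choice of $A',A''$ undoes, and the definition of $(R,G)$-generation lets the group element depend on $x$, so one never needs $x$ itself (or $x$ minus a bounded remainder) to split over $\Lambda_0$. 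Two further points on your write-up: your step 1, rationality of the $V_i$, is genuinely useful --- the paper silently applies Lemma~\ref{lemma:HighDimBalancing} to the components $x',x''$, which are only rational, not integral, and rationality of the block spans is what makes that legitimate --- but your kernel-intersection argument breaks when some other block is a singleton (use instead that $V_i$ is the column span of $A^{(i)}-I$ for $A^{(i)}\in G_i$ having no eigenvalue $1$ on $J_i$); and the ``block-closed support''/Galois-conjugacy fact is not automatic but a small theorem, requiring the Dirichlet rank count to show that the $\QQ$-algebra generated by $G_i$ on $V_i$ is a single number field.
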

\begin{proof}
    Let $G\subset\GL(n,\ZZ)$ be the subgroup generated by $\cup_{i=1}^{k}\{A_{i1},\dots,A_{il_i}\}$.
    We claim that there exists a rational polyhedral cone $P\subset C$ such that for any $x\in C\cap\ZZ^n$, there exists $A\in G$ such that $Ax\in P$.
    Given this claim, it is easy to see that $S_C=C\cap\ZZ^n$ is $(R,G)$-finitely generated where $R$ can be taken to be a Hilbert basis of $P\cap\ZZ^n$.
    
    We prove this claim by induction on the size of the partition $k$.
    When $k=1$, $l_1=n-1$, and this is the case proved in Lemma~\ref{lemma:HighDimBalancing}.
    Now assume that the lemma is true for any $k-1$ partitions, and we consider the case $k\ge2$.
    Let $J'=\cup_{i=1}^{k-1}J_i$, $C':=\{\sum_{j\in J'}x_ju_j:x_j\ge0,\,j\in J'\}$, and $G'$ be the group generated by $\cup_{i=1}^{k-1}\{A_{i1},\dots,A_{il_i}\}$.
    By the induction hypothesis, there exists a polyhedral cone $P'\subset C'$, such that for any $x'\in C'\cap\ZZ^n$, there exists $A'\in G'$ such that $A'x\in P'$.
    Again by Lemma~\ref{lemma:HighDimBalancing}, there exists a rational polyhedral cone $P''\subset C'':=\{\sum_{j\in J_k}x_ju_j:x_j\ge0,\,j\in J_k\}$ such that for any $x''\in C''$, there exists a matrix $A''\in\langle A_{k1},\dots,A_{kl_k}\rangle$ such that $A''x''\in P''$.
    We define $P=\conv(P'\cup P'')$, and for any $x\in C\cap\ZZ^n$, let $A'$ and $A''$ be the corresponding matrices defined above, and $A:=A'A''$.
    Since we can write $x=x'+x''$ where $x'\in C'$ and $x''\in C''$, due to the simplicity of $C$, we have $Ax=A'A''x'+A''A'x''=A'x'+A''x''\in P'+P''\subset P$ because $A''x'=x'$ and $A'x''=x''$ by our assumption and induction hypothesis.
\end{proof}

Now we are ready to prove Theorem~\ref{thm:simple-sufficient}. 
\begin{proof}[Proof of Theorem~\ref{thm:simple-sufficient}]
    By~\Cref{lemma:RationalSubspace}, we can take $A$ to be a generating matrix of $E_C(\QQ)$ with distinct eigenvalues, and further assume $A\in\ZZ^{n\times n}$ through scaling.
    Let $p_A(t)\in\ZZ[t]$ denote the characteristic polynomial of $A$, and suppose it factors into $p_A(t)=\prod_{i=1}^{m}q_{A,i}(t)$ over $\QQ$ with $d_i:=\deg(q_{A,i})$ for $i=1,\dots,m$.
    Here $q_{A,1},\dots,q_{A,m}$ must be pairwise comaximal since they do not share roots in $\RR[t]$.
    Then $E_C(\QQ)$ has a $\QQ[t]$-module structure in the following way: any $g\in\QQ[t]$ acts on $M\in E_C(\QQ)$ by $g(t)\cdot M=g(A)M$.
    \Cref{lemma:RationalSubspace} asserts that $E_C(\QQ)$ is a cyclic module, and is thus isomorphic to $\QQ[t]/(p_A(t))$, which is further isomorphic to $\prod_{i=1}^{m}\QQ[t]/(q_{A,i}(t))$ by the Chinese remainder theorem.
    In this way, we can view $E_C(\QQ)$ as a product of algebraic number fields $\QQ[t]/(q_{A,i}(t))\cong\QQ(\alpha_i)$, for some root $\alpha_i\in\RR$ of $q_{A,i}(t)$.

    Consider the subring $\mathcal{O}\subset E_C(\QQ)$ generated by $\ZZ[t]\subset\QQ[t]$ acting on the $n\times n$ identity matrix $I\in E_C(\QQ)$, which is finitely generated by $I,A,\dots,A^{n-1}\in E_C(\QQ)$ as a $\ZZ$-module and $\QQ\mathcal{O}=E_C(\QQ)$ by~\Cref{lemma:RationalSubspace}.
    Thus $\mathcal{O}$ is an order of $E_C(\QQ)$.
    By Dirichlet's unit theorem (\Cref{prop:DirichletUnitTheorem}), the unit group $\mathcal{O}^\times$ of $\mathcal{O}$ is a finitely generated Abelian group of rank $n-m$.
    Let $\mathcal{O}_{\max}$ be the maximal order of $E_C(\QQ)$, which by the same theorem has a unit group of the same rank.
    This means $[\mathcal{O}_{\max}^\times:\mathcal{O}^\times]=:l<\infty$, and thus for each generator of $\mathcal{O}_{\max}^\times$, its $l$-th power lies in $\mathcal{O}^\times$.
    To be more specific, let $\{\beta_{i,j}\}_{j=1}^{d_i-1}$ be the $d_i-1$ generators of the unit group of the maximal order $\ZZ[\alpha_i]\subset\QQ(\alpha_i)$. 
    Then the vector $b_{i,j}:=(1,\dots,1,\beta_{i,j}^l,1,\dots,1)$ ($\beta_{i,j}^l$ appears on $i$-th coordinate and 1 on all others) lies in $\mathcal{O}^\times$.
    The vector $b_{i,j}$, viewed as an automorphism of $\prod_{i=1}^{m}\QQ(\alpha_i)$, defines a unimodular matrix $B_{i,j}$ in $\GL(n,\QQ)$, and is further integral since the order $\mathcal{O}$ is generated by integral matrices $I,A,\dots,A^{n-1}$.  
    In particular, note that $B_{i,j}\in H_C(J_i)$ by construction, where $J_i\subset[n]$ are the indices of eigenvectors corresponding to the factor $q_{A,i}$.
    Therefore, by Lemma~\ref{lemma:ReducibleBalancing}, $S_C$ is $(R,G)$-finitely generated where $G$ can be generated by $\{A_{i,1},\dots,A_{i,d_i-1}\}_{i=1}^{m}$.
\end{proof}

\subsection{\texorpdfstring{$(R,G)$}{(R,G)}-finitely generated cones in the plane}
\label{sec:2dim}

The cone in dimension $n=2$ is either a pointed simple cone or a half-space, so we want to fully characterize the $(R,G)$-finitely generated cones in dimension $n=2$.
In this subsection, we first prove a sufficient and necessary condition for $(R,G)$-finite generation in Theorem~\ref{thm:main2d}. We also show some examples, and prove all the possible groups $G$ for $(R,G)$-finitely generated cones.
Then we show that the $(R,G)$-finitely generated non-pointed cones in dimension two (i.e., half-spaces) can also be characterized similarly.

\subsubsection{Pointed cone case}
By Lemma~\ref{lem:no_scaling}, a non-identity matrix in $\mathrm{GL}(2,\ZZ)$ must have distinct eigenvalues, thus the necessary condition in Theorem~\ref{thm:polyhedral-necessary} is also sufficient.
\begin{theorem}\label{thm:main2d}
    Suppose $C\in\RR^2$ is a pointed convex cone with extreme rays $[u_1], [u_2]$, where $u_1, u_2\in\RR^2$. Then $S_C$ is $(R,G)$-finitely generated, if and only if either $[u_1], [u_2]$ are both rational; or $u_1, u_2$ are two eigenvectors with positive eigenvalues of a non-identity unimodular matrix $A\in\mathrm{GL}(2,\ZZ)$.
\end{theorem}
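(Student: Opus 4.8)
The plan is to observe that one direction is essentially free, and the other follows by specializing earlier machinery to $n=2$. First I would note that $C\subset\RR^2$ pointed means its two extreme rays $[u_1],[u_2]$ are linearly independent, so $C$ is automatically a \emph{simple} cone. The forward direction (necessity) is immediate from \Cref{thm:polyhedral-necessary}: if $S_C$ is $(R,G)$-finitely generated and not both rays are rational, that theorem yields a non-identity $A\in\GL(2,\ZZ)$ with $Au_i=\lambda_i u_i$, $\lambda_i>0$, which is exactly the stated alternative. So the only real content is the backward direction (sufficiency).

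For sufficiency, I would split into the two cases of the hypothesis. If both $[u_1],[u_2]$ are rational, then $C$ is a rational polyhedral cone and $S_C$ is finitely generated by its Hilbert basis with the trivial group $G=\{I_2\}$, as already remarked after \Cref{def:RG-FG}. The interesting case is when there is a non-identity $A\in\GL(2,\ZZ)$ with $Au_1=\lambda_1 u_1$, $Au_2=\lambda_2 u_2$, $\lambda_1,\lambda_2>0$. The key point, flagged in the sentence immediately preceding the theorem, is that by \Cref{lem:no_scaling} a non-identity unimodular matrix cannot act as a scalar, so $A\neq \lambda I_2$; hence $\lambda_1\neq\lambda_2$, i.e. $A$ has \emph{distinct} eigenvalues. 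This means $u_1,u_2$ are eigenvectors of $A$ with distinct positive eigenvalues, so $A\in E_C(\QQ)$ is a generating matrix in the sense of \Cref{lemma:RationalSubspace} (condition 3), and the hypothesis of \Cref{thm:simple-sufficient} is met with $n=2$. Applying \Cref{thm:simple-sufficient} then gives that $S_C$ is $(R,G)$-finitely generated, completing this direction.

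The main technical wrinkle to verify is that $A$ indeed has \emph{rational} entries, since \Cref{thm:simple-sufficient} requires $A\in\QQ^{n\times n}$: but $A\in\GL(2,\ZZ)\subset\QQ^{2\times 2}$ by definition, so this is automatic. I should also confirm the eigenvalue positivity and distinctness feed correctly into the distinctness requirement of \Cref{thm:simple-sufficient}: there $\lambda_{k+1},\dots,\lambda_n$ (the eigenvalues on irrational rays) must be distinct. In the mixed case where exactly one ray is rational, \Cref{lemma:RationalEigenvalue} forces the rational ray's eigenvalue to be $1$, and distinctness of the single remaining eigenvalue is vacuous; in the fully irrational case distinctness of $\lambda_1,\lambda_2$ is exactly what \Cref{lem:no_scaling} provided. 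I do not anticipate a genuine obstacle here — the real work has been front-loaded into \Cref{thm:polyhedral-necessary} and \Cref{thm:simple-sufficient}, and in dimension two the gap between the necessary and sufficient conditions collapses precisely because $\GL(2,\ZZ)$ admits no nontrivial scalar action, forcing distinct eigenvalues automatically.
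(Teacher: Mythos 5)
Your proposal is correct and takes essentially the same route as the paper: the paper disposes of \Cref{thm:main2d} in one line by noting (via \Cref{lem:no_scaling}) that a non-identity matrix in $\mathrm{GL}(2,\ZZ)$ with two independent eigenvectors and positive eigenvalues must have distinct eigenvalues, so that necessity is \Cref{thm:polyhedral-necessary} and sufficiency is \Cref{thm:simple-sufficient}. Your extra verifications (automatic rationality of $A$, and the vacuous distinctness requirement when one ray is rational) are consistent with, and slightly more careful than, the paper's one-sentence justification.
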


The example in \cite[Example 1]{deLoera2025integer} is a cone generated by irrational rays in the plane, from which one might want to attribute the $(R,G)$-finite generation with irrationality of the cone $C$.
We present in Example~\ref{ex:SOCSlice} another cone generated by irrational rays that has its conical semigroup to be $(R,G)$-finitely generated.

\begin{example}\label{ex:SOCSlice}
    Let $C=\text{cone}(u_1, u_2)$, where $u_1 = (1,\sqrt{2}), u_2 = (-1,\sqrt{2})$. 
    By Theorem~\ref{thm:main2d}, we know that $S_C$ is $(R,G)$-finitely generated because there exists $A = \begin{pmatrix} 3 & 2\\ 4 & 3\end{pmatrix}\in \mathrm{GL}(2, \ZZ)$ such that the two extreme rays of $C$ are eigenvectors of $A$ with positive eigenvalues. 

To find such a matrix $A$, let $A=\begin{pmatrix}
        a_{11} & a_{12}\\
        a_{21} & a_{22}
    \end{pmatrix}\in\mathrm{GL}(2,\ZZ)$ with eigenvalues $\lambda_1$, $\lambda_2$.
Then noting that $\lambda_1\lambda_2 = 1$ and $u_1, u_2$ are two eigenvectors of $A$, we must have
\begin{equation*}
\begin{cases}
    a_{11} = a_{22}, 2a_{12} = a_{21},\\
    a_{11}a_{22} - a_{12} a_{21} = 1.\\
\end{cases}
\end{equation*}
Therefore, we just find integer solutions to the Pell's equation $x^2 - 2y^2 = 1$ such that $x-\sqrt{2} y\ge 0$. A solution $(3, 2)$ will construct the given matrix $A$.

Once the group $G=\langle A\rangle$ is given, we want to use this as an example to show the construction of a rational polyhedral cone $P$ to obtain $R$ as in the proof of Lemma~\ref{lemma:HighDimBalancing}.
Because $[u_1], [u_2]$ are irrational, we only consider the interior of the cone $C$, which contains rational points. By construction, $A$ is a matrix with $\log(\lambda_C(A)) = (\log\lambda_1, \log\lambda_2)\ne 0$ and $\log\lambda_1 + \log\lambda_2=0$. Let $A_0:= aI$ with $\log(\lambda_C(A_0)) = (\log a, \log a)$ and $a>1$. Then the closure $B$ of the fundamental parallelepiped of the lattice $L$ generated by $\log(\lambda_C(A_0)),\log(\lambda_C(A))$ is $\{((\log\lambda_1) x_1 + (\log a) x_2, (\log\lambda_2) x_1 + (\log a) x_2): 0\le x_1\le 1, 0\le x_2\le 1\}$. Under the continuous map $\eta: (t_1, t_2)\mapsto (\exp{t_1}) u_1 + (\exp{t_2}) u_2$, $\eta(B) = \{\lambda_1^{x_1} a^{x_2} u_1 + \lambda_2^{x_1} a^{x_2} u_2: 0\le x_1\le 1, 0\le x_2\le 1\}\subset C$.
Because the rational points in the plane is dense, we can construct a rational polyhedral cone $P$ such that $\eta(B)\cap \QQ^2\subset P\subset C$.
One construction of $P$ is that $P=cone(t_1, t_2)$, where $t_1 = (0, 1)$, $t_2 = A t_1 = (2, 3)$.
Thus $S_C$ is $(R,G)$-finitely generated by $G=\left\langle A\right\rangle$ and $R$ that is the Hilbert basis of the rational cone spanned by $t_1, t_2$. We can also translate the fundamental parallelepiped to obtain a different construction of $P$.
\end{example}

\begin{figure}[H]
    \centering
    \subfigure{\includegraphics[height=4cm]{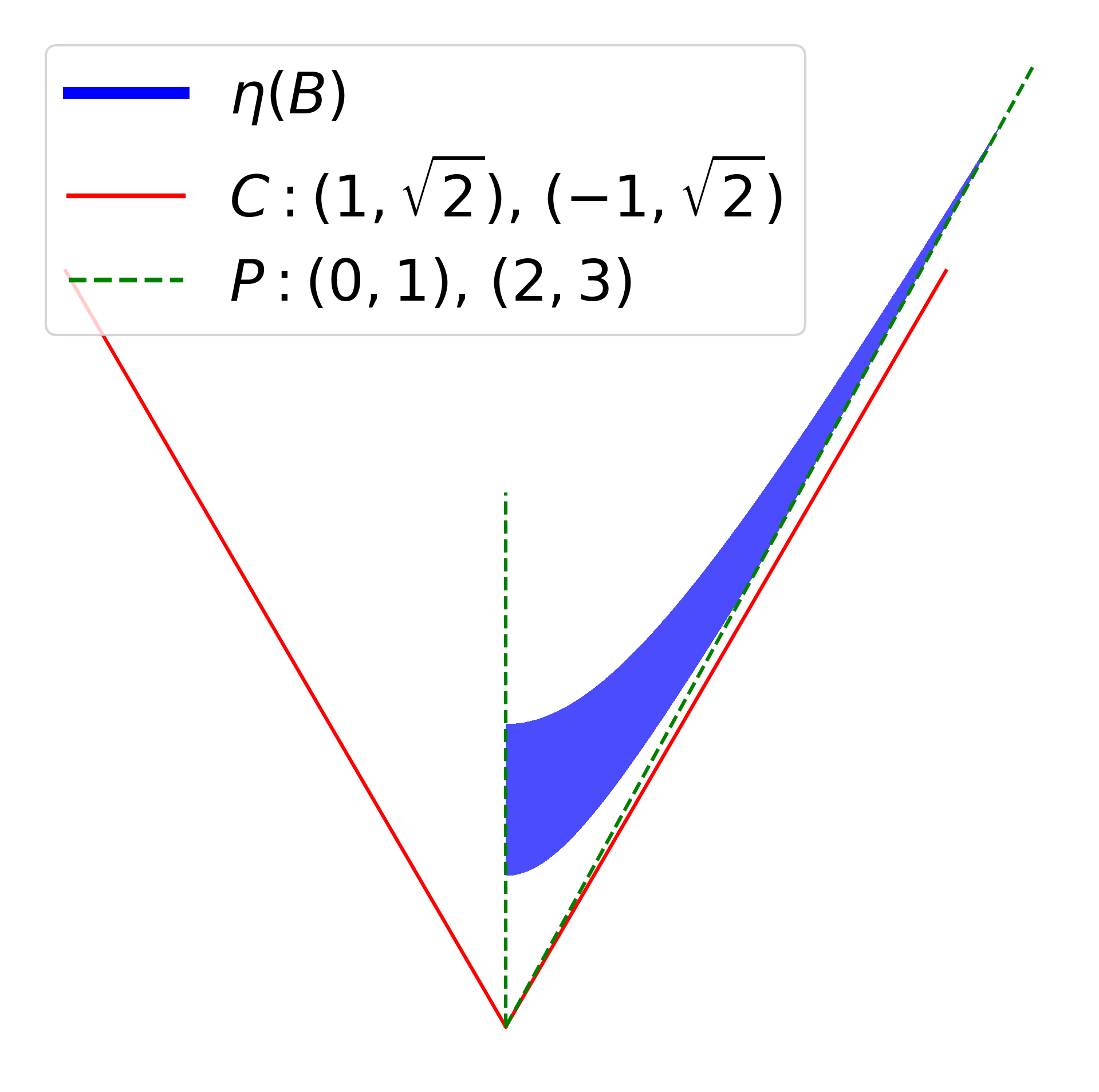}\label{fig:subfig1}}
    \hspace{2cm}
    \subfigure{\includegraphics[height=4cm]{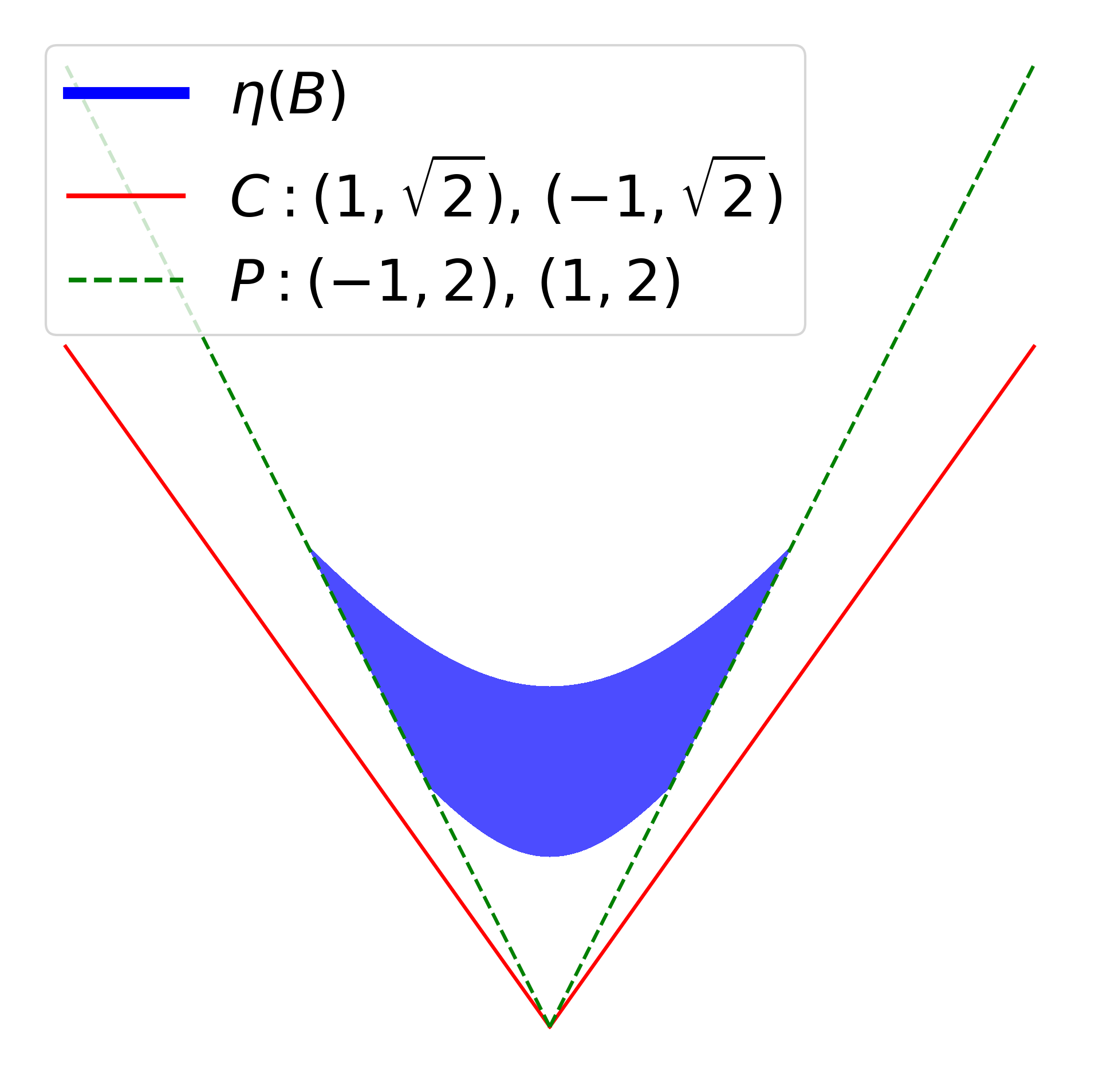}\label{fig:subfig2}}
    \caption{Two constructions of $P$ in Example \ref{ex:SOCSlice}.}
    \label{fig:mainfigure}
\end{figure}

Observe that $C$ in Example~\ref{ex:SOCSlice} is symmetric with respect to the $y$-axis. We can also choose
$$
G= \left\langle \begin{pmatrix} 3 & 2\\ 4 & 3\end{pmatrix},  
    \begin{pmatrix} -1 & 0\\ 0 & 1\end{pmatrix}\right\rangle.
$$
In the following theorem, we characterize all possible symmetric groups for $(R,G)$-finitely generated cones in dimension $n=2$.

\begin{theorem}\label{thm:group}
Suppose $C\in\RR^2$ is a pointed convex cone and let $G$ be the group of integer matrices fixing $C$ and $S_C$.
Then $G$ has four possibilities: 
\begin{enumerate}
    \item the trivial group (only identity),
    \item the finite group $\ZZ_2$ (generated by one matrix switching extreme rays),
    \item the infinite group $\ZZ$ (generated by one matrix fixing extreme rays),
    \item the infinite dihedral group (generated by two matrices, one switching extreme rays, and one fixing extreme rays).
\end{enumerate}
\end{theorem}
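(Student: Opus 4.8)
The plan is to analyze $G$ through its action on the two extreme rays $[u_1],[u_2]$ of $C$. By Lemma~\ref{lem:extreme_rays}, every $A\in G$ permutes the set $\{[u_1],[u_2]\}$, so there is a homomorphism $\phi\colon G\to\ZZ_2$ recording whether $A$ fixes both rays or swaps them. I would write $H:=\ker\phi$ for the subgroup fixing each ray; since $\phi(G)$ is a subgroup of $\ZZ_2$ it is either trivial or all of $\ZZ_2$. Combined with the two possibilities for $H$ established below, the $2\times2$ array of cases yields exactly the four listed outcomes. I would first record that $G\subseteq\GL(2,\ZZ)$ (an invertible integer matrix preserving $S_C$ with integer inverse is unimodular, consistent with Lemma~\ref{lem:unimodular}), and that $H$ coincides with the set of unimodular matrices having $u_1,u_2$ as eigenvectors with positive eigenvalues: any such matrix sends $C=\mathrm{cone}(u_1,u_2)$ to itself and, being unimodular, preserves $\ZZ^2$ and hence $S_C$, so it lies in $G$.

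Next I would pin down the structure of $H$. For $A\in H$ with $Au_i=\lambda_iu_i$, $\lambda_i>0$, unimodularity forces $\det A=\lambda_1\lambda_2=1$, so $\lambda_2=\lambda_1^{-1}$. The assignment $A\mapsto\log\lambda_1(A)$ is then a homomorphism $H\to(\RR,+)$, and it is injective, since $\lambda_1(A)=1$ gives two independent eigenvectors with eigenvalue $1$, forcing $A=I$. The crux of the argument is that this image is a \emph{discrete} subgroup: $\operatorname{tr}(A)=\lambda_1+\lambda_1^{-1}$ is an integer that is at least $2$, with equality only when $\lambda_1=1$, so every non-identity $A\in H$ has $\operatorname{tr}(A)\ge 3$ and hence $\lambda_1(A)\ge(3+\sqrt5)/2$, keeping $\lvert\log\lambda_1(A)\rvert$ bounded away from $0$. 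A discrete subgroup of $\RR$ is trivial or infinite cyclic, so $H\cong\{1\}$ or $H\cong\ZZ$.

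Finally I would assemble the four cases. If $\phi(G)$ is trivial then $G=H$, giving the trivial group or $\ZZ$, i.e. possibilities (1) and (3). If $\phi(G)=\ZZ_2$, choose $B\in G$ swapping the rays, say $Bu_1=c_1u_2$ and $Bu_2=c_2u_1$ with $c_1,c_2>0$. A direct computation then shows that for $A\in H$ the conjugate $BAB^{-1}$ acts on $(u_1,u_2)$ with eigenvalues $(\lambda_1^{-1},\lambda_1)$, that is $BAB^{-1}=A^{-1}$. Since $B^2$ fixes both rays, $B^2\in H$; writing $B^2=A^k$ and conjugating by $B$ yields $A^k=A^{-k}$, whence $A^{2k}=I$ and $k=0$, so $B^2=I$. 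Thus when $H$ is trivial $G=\{I,B\}\cong\ZZ_2$ (possibility (2)), and when $H\cong\ZZ$ we obtain $G=\langle A,B\mid B^2=1,\ BAB^{-1}=A^{-1}\rangle$, the infinite dihedral group (possibility (4)). I expect the discreteness step for $H$ to be the main obstacle, as it is precisely what rules out a dense subgroup of $\RR$ and forces $H$ to be cyclic; the verification that $B^2=I$ in the swapping case is the other point requiring care.
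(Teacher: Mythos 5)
Your proposal is correct and follows the same overall decomposition as the paper: split $G$ via its action on the two extreme rays (Lemma~\ref{lem:extreme_rays}), analyze the ray-fixing subgroup $H$, then adjoin a swapping matrix. The genuine difference is in the central step, the proof that $H$ is trivial or infinite cyclic. The paper sets $\lambda_0=\inf\{\lambda_1(T):T\in H,\ \lambda_1(T)>1\}$, argues the infimum is attained because otherwise infinitely many distinct integer matrices with the same eigenvectors would accumulate at a limit matrix (contradicting discreteness of $\ZZ^{2\times 2}$), and then shows by a division-type argument that the minimizer generates $H$. You instead note that for non-identity $A\in H$ the trace $\operatorname{tr}(A)=\lambda+\lambda^{-1}$ is an integer strictly greater than $2$, hence at least $3$, giving the explicit spectral gap $\max(\lambda,\lambda^{-1})\ge(3+\sqrt{5})/2$; so the image of the injective homomorphism $A\mapsto\log\lambda_1(A)$ is a subgroup of $(\RR,+)$ whose nonzero elements are bounded away from $0$, hence discrete, hence trivial or cyclic. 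This is more elementary and quantitative: one integrality observation replaces both the compactness/limit argument and the argument that the minimal element generates. (One small slip: since $\lambda_1(A)$ denotes the eigenvalue on $u_1$, which may be less than $1$, you should write $\max\bigl(\lambda_1(A),\lambda_1(A)^{-1}\bigr)\ge(3+\sqrt{5})/2$ rather than $\lambda_1(A)\ge(3+\sqrt{5})/2$; your subsequent statement that $\lvert\log\lambda_1(A)\rvert$ is bounded away from $0$ is the correct one.) Your treatment of the swapping case also differs slightly: the paper obtains $Q^2=I$ by observing that $Q^2$ is a positive scalar matrix and invoking Lemma~\ref{lem:no_scaling}, while you derive $B^2=I$ from $B^2=A^k$ together with the conjugation relation $BA^kB^{-1}=A^{-k}$; both are valid, and both routes then identify $G$ as trivial, $\ZZ_2$, $\ZZ$, or infinite dihedral in the same way.
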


\begin{proof}
By Lemma \ref{lem:extreme_rays}, we know that for any $g\in G$, $g$ either fixes the two extreme rays of $C$, or switches them.
Let $H:=G\cap H_C$ be the subgroup of $G$ consisting of matrices fixing the extreme rays of $C$.

If there exists a non-identity matrix in $H$, then we claim that $H$ is isomorphic to $\ZZ$, i.e. there is essentially a unique matrix $A$ (can also take $A^{-1}$) that generates $H$. 
By Lemma \ref{lem:unimodular}, the two eigenvalues of $T$ in $H$ are $\lambda$ and $1/\lambda$. Denote $\lambda_1(T)=\max\{\lambda,1/\lambda\}\ge 1$. If $\lambda_1(T)=1$ then $T$ is the identity matrix. Let $\lambda_0=\inf\{\lambda_1(T):~T\in H, \lambda_1(T)>1\}$. We claim that there exists a matrix $A\in H$ such that $\lambda_1(A)=\lambda_0$. Suppose no such matrix exists, then there are matrices $T_i$ in $H$ for which $\lambda_i=\lambda_1(T_i)>1$ converge to some number $\lambda_0$. Then these matrices $T_i$ approach some matrix $A$ (the eigenvectors are fixed, and eigenvalues approach $\lambda_0$ and $1/\lambda_0$). However, $T_i$ are integer matrices and form a discrete set, a contradiction. Then it remains to show that $A$ generates all of $H$.
Suppose, for the sake of contradiction, there is $B\in H$ such that its largest eigenvalue is not $\lambda_0^k$ for some $k\in\ZZ$. Then it lies between $\lambda_0^k$ and $\lambda_0^{k+1}$ for some $k$. By considering $B A^{-k}$ or $B A^k$, we find a matrix whose largest eigenvalue is less than $\lambda_0$ and larger than 1, which is contradiction to the minimality of $\lambda_0$. Therefore, if $H$ is not trivial group, then $H$ is isomorphic to $\ZZ$.

If there exists a matrix $Q$ that switches two extreme rays $[u_1], [u_2]$. Then $Q^2=I_2$ by Lemma~\ref{lem:no_scaling}. Suppose that we have two such matrices $Q$ and $B$, i.e., $Qu_1 = \lambda u_2$, $Bu_1 = \mu u_2$, $\lambda\ne\mu$. 
Then $QB u_1 = (\mu/\lambda) u_1$, $QB u_2 = (\lambda/\mu) u_2$, which implies $QB\in H$. This shows that $Q$ and $H$ generate all of $G$, because other element $B$ that switches extreme rays can be generated by $B = Q^{-1} (QB)$.

Therefore, if $H$ is trivial and there is one matrix switching extreme rays, then $G$ is the finite group $\ZZ_2$. If $H=\langle A\rangle$ is not trivial and there is one matrix $Q$ switching extreme rays, then $G=\langle A, Q\rangle$ is the infinite dihedral group because $Q^2 = I_2$ and $Q A Q = A^{-1}$.
\end{proof}

Let $s_1, s_2$ ($s_1\ne s_2$) be the slopes of the two extreme rays of cone $C$.
By Theorem~\ref{thm:main2d}, we know that a non-identity matrix in $H$ exists if and only if
\begin{equation*}\label{eqn:cond1}
\begin{cases}
    s_1 + s_2 = -\frac{a_{11}-a_{22}}{a_{12}},\\
    s_1 s_2 = -\frac{a_{21}}{a_{12}},\\
\end{cases}
~\text{where}~A=\begin{pmatrix}
        a_{11} & a_{12}\\
        a_{21} & a_{22}
    \end{pmatrix}\in\mathrm{GL}(2,\ZZ).
\end{equation*}
For integer matrix switching extreme rays, it satisfies $Q^2= I_2$.
Such matrix is called an \emph{involutory} matrix, and is in the form 
$\begin{pmatrix}
    a & b\\
    c & -a
\end{pmatrix}$, where $a^2 + bc = 1$ with the eigenvalues 1 and -1.
We can easily verify that
\begin{itemize}
    \item If $s_1s_2 = 1$, then $a=0$, $b = c = 1$;
    \item If $s_1 + s_2\in\ZZ$, then $b=0$, $a=1$, $c=(s_1 + s_2)$;
    \item If $1/s_1 + 1/s_2 \in \ZZ$, then $c=0$, $a=1$, $b=-(1/s_1 + 1/s_2)$.
\end{itemize}

Example \ref{ex:SOCSlice} is an example of case 4. We give examples for the other three cases.
\begin{example} 
Let $u_1 = (2,1), u_2 = (3,5)$. The potential matrix switching two extreme rays is $\begin{pmatrix}
        \frac{13}{7} & -\frac{5}{7}\\
        \frac{24}{7} & -\frac{13}{7}
    \end{pmatrix}$ is rational, but not integer. The group $G$ is the trivial group.
    
\end{example}
\begin{example}
Let $u_1 = (1,1), u_2 = (-1,1)$. The matrix switching two extreme rays is $\begin{pmatrix}
        1 & 0\\
        0 & -1
    \end{pmatrix}$. The group $G$ is isomorphic to $\ZZ_2$.

\end{example}
\begin{example}
Let $u_1 = (1,\frac{8-3\sqrt{11}}{5}), u_2 =(1,\frac{8+3\sqrt{11}}{5})$. The matrix fixing two extreme rays is $A=\begin{pmatrix}
        2 & 5\\
        7 & 18
    \end{pmatrix}$, but no integer matrix switching two extreme rays. The group $G$ is generated by one matrix $A$, and is isomorphic to $\ZZ$.

\end{example}

\subsubsection{Half-space case}
Next, we consider the half-space case.

\begin{theorem}\label{thm:halfspace}
    Suppose $C\in\RR^2$ is a half-space associated with the line spanned by $r\in\RR^2$. Then $S_C$ is $(R,G)$-finitely generated, if and only if $r$ is rational or $r$ is the eigenvector with positive eigenvalue of a non-identity unimodular matrix $A\in\mathrm{GL}(2, \ZZ)$ with $\det(A)=1$.
\end{theorem}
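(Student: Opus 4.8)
The plan is to treat a half-space in $\RR^2$ as the degenerate boundary case of the pointed-cone analysis, reducing it as much as possible to the machinery already developed for Theorem~\ref{thm:main2d} and Theorem~\ref{thm:group}. Write $C=\{x\in\RR^2:\langle n,x\rangle\ge0\}$ where $n$ is normal to the line spanned by $r$. The maximal subspace contained in $C$ is the line $L=\operatorname{span}_\RR\{r\}$, so by Lemma~\ref{lem:subspace} every $T\in G$ satisfies $T\cdot L=L$, i.e.\ $Ar=\lambda r$ for some $\lambda\in\RR$ (and $\lambda>0$ since $T$ must also fix the half-space $C$, hence its boundary ray direction, up to positive scaling). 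Thus $r$ is necessarily an eigenvector of every $A=A_T\in\GL(2,\ZZ)$ representing an element of $G$ (Lemma~\ref{lem:unimodular}).

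For the \textbf{necessity} direction, I would argue as follows. If $r$ is rational there is nothing to prove, so assume $r$ is irrational. I claim $G$ must be infinite: otherwise Lemma~\ref{lem:finite_G} forces $C$ to be a rational polyhedral cone, contradicting that the bounding line (spanned by irrational $r$) is not rational. Since $G$ is infinite but each $A\in G$ fixes the single line $L=\operatorname{span}\{r\}$, there must exist a non-identity $A\in G$ with $Ar=\lambda r$, $\lambda>0$. If $\det(A)=1$ we are done. If instead $\det(A)=-1$, then $A^2\in G$ has $\det(A^2)=1$ and $A^2r=\lambda^2 r$; one then checks $A^2\neq I_2$ (otherwise the eigenvalues of $A$ are $\pm1$, and combined with $Ar=\lambda r$, $\lambda>0$ this forces $\lambda=1$ and $A$ an involution, which after accounting for finitely many such reflections cannot generate an infinite group together with the orientation-reversing constraint), so $A^2$ is the desired non-identity unimodular matrix with determinant $1$ having $r$ as a positive eigenvector.

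For the \textbf{sufficiency} direction, the rational case follows from Hilbert's theorem applied to the two rational rays bounding the half-space (it is a rational polyhedral cone, hence $(R,\{I\})$-finitely generated). In the irrational case, suppose $r$ is the positive eigenvector of a non-identity $A\in\GL(2,\ZZ)$ with $\det(A)=1$; then the two eigenvalues are $\lambda,1/\lambda$ with $\lambda\neq1$ (Lemma~\ref{lem:no_scaling}), and the second eigenvector $r'$ is transverse to $L$. I would then mimic the balancing construction of Lemma~\ref{lemma:HighDimBalancing}: split $C$ into the line $L$ (which contributes a rational sublattice once we adjoin a lattice basis of $L\cap\ZZ^n$, noting $L$ itself is one-dimensional over the eigendirection but the half-space allows both signs along $L$) and the open half determined by $r'$. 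The action $x\mapsto Ax$ scales the $r'$-coordinate by $\lambda$, so using $A$ together with a scaling $A_0=aI$ as in the pointed case, one constructs a fundamental parallelepiped in the $\log$-eigenvalue coordinate whose image under $\eta$ captures every integer point up to the $G$-action inside a bounded, hence rationally enclosable, region. The finite set $R$ is then a Hilbert basis of the resulting rational polyhedral cone together with generators along $L$.

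The main obstacle I expect is the \textbf{lineality direction}: unlike the pointed case, the half-space contains the full line $L$, so the "balancing" argument must handle a one-dimensional lineality space along which $A$ acts nontrivially (scaling by $\lambda$), while integer points on $L$ itself are governed by the rational structure of $L\cap\ZZ^2$. One must verify that these boundary integer points can still be finitely generated up to the $G$-action — the key point being that $\det(A)=1$ (so the determinant condition in the statement is sharp) is exactly what makes $A$ preserve orientation on $L$ and allows both rays of $L$ to be balanced simultaneously, whereas a $\det=-1$ matrix would swap the two half-lines of $L$ and require separate care. Reconciling the half-space geometry with the eigenvalue-logarithm lattice argument of Lemma~\ref{lemma:HighDimBalancing} is the delicate step.
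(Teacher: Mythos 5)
Your necessity argument rests on a false claim: that every $T\in G$ must act on $r$ with a \emph{positive} eigenvalue ``since $T$ fixes the half-space $C$, hence its boundary ray direction, up to positive scaling.'' This is wrong. The boundary of $C$ is the full line $L=\operatorname{span}\{r\}$, not a ray, and an element of $G$ may reverse it: for $C=\{(x,y):y\ge0\}$ and $r=(1,0)$, the matrix $\begin{pmatrix}-1&0\\0&1\end{pmatrix}$ preserves $C$ and $S_C$ but sends $r\mapsto -r$. The case your claim erases is precisely the crux of the paper's proof, which runs a case analysis on the sign of $\lambda$ and on $\det(A_T)$: the genuinely delicate situation is when every non-identity element of $G$ is a reflection with $\lambda<0$, $A_T^2=I_2$, $\det(A_T)=-1$. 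There the paper argues that if $G$ contains only one such reflection it is finite, so Lemma~\ref{lem:finite_G} makes $C$ rational polyhedral and $r$ rational; while if it contains two distinct reflections $A_T\neq A_{T'}$, the product $A_{T'}A_T$ is a non-identity matrix with determinant $1$ and $A_{T'}A_Tr=\lambda\lambda' r$ with $\lambda\lambda'>0$, which is the required $A$. Your infinite-$G$ argument never confronts this case, and your handling of $\det(A)=-1$ via $A^2$ still presupposes $\lambda>0$; in fact the paper shows that $\lambda>0$ together with $\det(A_T)=-1$ is impossible, because the second eigenvector then has a negative eigenvalue, contradicting $T\cdot C=C$.

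On sufficiency, your plan to rerun the balancing construction of Lemma~\ref{lemma:HighDimBalancing} directly on the half-space, treating the lineality space by hand, is left unfinished (you yourself flag it as ``the delicate step''), and it is more laborious than necessary. The paper sidesteps the lineality issue entirely: split $C$ into the two pointed simple cones $\operatorname{cone}(r,r')$ and $\operatorname{cone}(-r,r')$, where $r'\in C$ is the other eigenvector of $A$. Both $r$ and $-r$ are eigenvectors of $A$ with the same positive eigenvalue, and the two eigenvalues $\lambda$ and $1/\lambda$ are distinct in the irrational case (if $\lambda=1$ with $A\neq I_2$, the eigenspace of $1$ is a rational subspace, forcing $r$ rational), so Theorem~\ref{thm:simple-sufficient} applies to each piece; since every integer point of $C$ lies in one of the two pieces, the union of the resulting generating sets together with the group $\langle A\rangle$ finishes the proof.
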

\begin{proof}[Proof of Theorem \ref{thm:halfspace}]
For the necessary condition, by Lemma \ref{lem:subspace}, we know that for any $T\in G$, $T\cdot L = L$, where $L=\mathrm{span}(r)$. For any non-identity $T\in G$, we have $T\cdot r = \lambda r$ for some $\lambda\in \RR$.
By Lemma \ref{lem:unimodular}, we know that $T\cdot r = A_T r$, where $A_T\in \mathrm{GL}(2,\ZZ)$. Therefore, $r$ is an eigenvector of $A_T$.

If $\lambda>0$ and $\det(A_T)=1$, then $A=A_T$ satisfies the condition.

If $\lambda>0$ and $\det(A_T)=-1$, then $A_T$ has another eigenvector $r'\in C$ such that $A_T r' = \lambda' r'$, $\lambda'<0$, which contradicts $T\cdot C = C$.

If $\lambda<0$ and $A_T^2 \ne I_2$, then $A = A_T^2$ satisfies the condition, because $A_T^2 r = \lambda^2 r$ and $\det(A_T^2)=1$.

If $\lambda<0$, $A_T^2=I_2$ and $\det(A_T)=1$, then $A_T$ has another eigenvector $r'\in C$ such that $A_T r' = \lambda' r'$, $\lambda'=1/\lambda<0$, which contradicts $T\cdot C = C$.

Thus, we only need to consider the case where $\lambda<0$, $A_T^2=I_2$, $\det(A_T)=-1$ for all $T\in G$.
If there is only one such matrix, then $G$ is finite. By Lemma \ref{lem:finite_G}, we know that $r$ is rational. If there exist distinct $A_{T}$ and $A_{T'}$ such that $A_{T}^2= I_2$, $A_{T'}^2= I_2$, $A_{T} r=\lambda r$, $A_{T'} r= \lambda' r$, $\lambda,\lambda'<0$, $\det(A_T)=\det(A_{T'})=1$. Then we have $A_{T'}A_T\ne I_2$, $\det(A_TA_{T'})=1$ and $A_{T'}A_T r = \lambda\lambda' r$, where $\lambda\lambda'>0$. Therefore, $A=A_{T'}A_T$ satisfies the condition.

Therefore, $r$ is the eigenvector with positive eigenvalue of a non-identity unimodular matrix $A$.

The sufficient condition follows from Theorem~\ref{thm:simple-sufficient} after dividing the half-space into two pointed cones with extreme rays $r$, $r'$ and $-r$, $r'$, respectively, where $r'\in C$ is the other eigenvector of $A$.
\end{proof}

\subsection{Three-dimensional \texorpdfstring{$(R,G)$}{(R,G)}-finitely generated pointed polyhedral cones}
\label{sec:3dim}

In this section, we want to extend Theorem~\ref{thm:main2d} to 3-dimensional cones by examining the distinct eigenvalues.

\begin{theorem}\label{thm:main3d}
    Suppose $C\subset\RR^3$ is a polyhedral cone. Then $S_C$ is $(R,G)$-finitely generated, if and only if either all extreme rays are rational, or $C$ is simple with extreme rays $[u_1],[u_2],[u_3]$ that are eigenvectors with distinct positive eigenvalues of a non-identity unimodular matrix $A\in\operatorname{GL}(3,\ZZ)$.
\end{theorem}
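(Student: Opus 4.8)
The plan is to prove Theorem~\ref{thm:main3d} by assembling results already established, handling the necessary and sufficient directions separately. For the \emph{sufficient} direction, the statement is essentially immediate from~\Cref{thm:simple-sufficient}: if $C$ is simple with extreme rays $[u_1],[u_2],[u_3]$ that are eigenvectors of a non-identity $A\in\GL(3,\ZZ)$ with distinct positive eigenvalues $\lambda_1,\lambda_2,\lambda_3$, then taking this same $A\in\QQ^{3\times3}$ (unimodular integer matrices are rational) witnesses the hypothesis of~\Cref{thm:simple-sufficient}. One must only verify that the distinctness hypothesis on the irrational eigenvalues $\lambda_{k+1},\dots,\lambda_n$ is met; but here all three eigenvalues are distinct, so this holds regardless of how many rays are rational. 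The all-rational case is the classical Hilbert basis result. Thus the sufficient direction reduces to citing earlier theorems.

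The \emph{necessary} direction is where the real work lies, and I would split it according to whether $C$ has any irrational extreme ray. If all extreme rays are rational we are done, so assume at least one is irrational. First I would invoke~\Cref{cor:3DSimple} to conclude that $C$ must be simple, hence has exactly three extreme rays $[u_1],[u_2],[u_3]$. Next, by~\Cref{thm:polyhedral-necessary}, there exists a non-identity matrix $A\in\GL(3,\ZZ)$ with $Au_i=\lambda_iu_i$ for positive $\lambda_i$, $i=1,2,3$. The remaining task — and the main obstacle — is to upgrade this to a matrix whose three positive eigenvalues are \emph{distinct}. The matrix furnished by~\Cref{thm:polyhedral-necessary} could a priori have repeated eigenvalues.

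To overcome this, I would argue that among the matrices in $H_C$ (the unimodular matrices fixing all three rays as eigendirections) there is one with distinct eigenvalues, provided $H_C$ is nontrivial. The key structural fact is that since at least one ray is irrational, the eigenvalue attached to it cannot be $1$ (by~\Cref{lemma:RationalEigenvalue}, a rational eigenvector forces eigenvalue $1$, so an eigenvalue $\neq 1$ signals irrationality), and the constraint $\det(A)=\lambda_1\lambda_2\lambda_3=\pm1$ together with~\Cref{lem:no_scaling} rules out $A=\lambda I$. The crucial observation is that $H_C$ cannot be finite: if it were, then by~\Cref{lemma:NormalSubgroup} the cone would be $(R,H)$-finitely generated for a finite group, and~\Cref{lem:finite_G} would force $C$ to be rational polyhedral, contradicting irrationality. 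Since $H_C$ is infinite and abelian (its elements are simultaneously diagonalized by $U=(u_1,u_2,u_3)$), and each element's eigenvalue vector is determined by its action, I would analyze the possible eigenvalue patterns: if two eigenvalues always coincided across all of $H_C$, the simultaneous constraint would confine $H_C$ to a rank-one family in which the repeated eigenvalue pair corresponds to a two-dimensional invariant subspace. I would then show this two-dimensional block, being fixed setwise and rational in its rational canonical structure, forces the two rays spanning it to be rational or to be swapped, and a careful eigenvalue count (mirroring the argument in~\Cref{cor:3DSimple}) produces a matrix in $H_C$ with three distinct eigenvalues by taking suitable products or powers of generators. The hardest part is ruling out the degenerate configuration of a persistently repeated eigenvalue; here I expect to use that an infinite abelian subgroup of $\GL(3,\ZZ)$ acting with a repeated positive eigenvalue on a pointed irrational cone cannot exist, because the two-dimensional eigenspace would be rational (its orthogonal complement being the third eigendirection, which must then also be rational), contradicting irrationality of the cone.
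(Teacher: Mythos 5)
Your sufficiency argument and the opening of your necessity argument (citing \Cref{cor:3DSimple} and then \Cref{thm:polyhedral-necessary}) coincide with the paper's proof, and you correctly identify the one remaining issue: the matrix $A$ supplied by \Cref{thm:polyhedral-necessary} could a priori have a repeated eigenvalue. However, your proposed resolution of that issue is where the proof breaks down, and it misses the short argument the paper actually uses: in dimension $3$ this $A$ \emph{automatically} has distinct eigenvalues, by a case analysis on its characteristic polynomial $p_A(t)\in\ZZ[t]$. If no eigenvalue is rational, then $p_A$ is irreducible over $\QQ$ (a reducible cubic has a rational root), hence separable, so the eigenvalues are distinct. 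If exactly one eigenvalue is rational, it must equal $1$ (its eigenspace is a rational line, so \Cref{lemma:RationalEigenvalue} applies; alternatively, a positive rational root of a monic integer polynomial with constant term $\pm1$ is $1$), and $p_A(t)=(t-1)q(t)$ with $q$ an irreducible quadratic whose two roots are distinct irrationals, so again all three eigenvalues are distinct. If at least two eigenvalues are rational, then all three are, hence all equal $1$, and since $A$ is diagonalizable (it has the three independent eigenvectors $u_1,u_2,u_3$) this forces $A=I$, contradicting non-identity. No further group theory is needed.

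By contrast, several steps in your group-theoretic route do not hold as stated. (i) You use \Cref{lemma:RationalEigenvalue} in the converse direction: the lemma says a rational eigenvector forces eigenvalue $1$, not that an irrational eigenvector forces eigenvalue $\neq1$; an irrational vector can perfectly well lie in a two-dimensional eigenspace of eigenvalue $1$, which is exactly the degenerate configuration you need to exclude. (ii) The crux of your argument --- that a persistently repeated eigenvalue yields a \emph{rational} two-dimensional eigenspace --- is asserted without proof; rationality of $\ker(A-\lambda I)$ is equivalent to rationality of $\lambda$, and showing the repeated eigenvalue must be rational is precisely the content of the minimal-polynomial argument above (if $\lambda$ were a repeated irrational root of the cubic $p_A$, its degree-two minimal polynomial would divide $p_A$ with a rational linear cofactor equal to $t-\lambda$, a contradiction). (iii) The claim that the third eigendirection is ``the orthogonal complement'' of that eigenspace is false in general, since eigenvectors of a nonsymmetric matrix are not orthogonal; the correct statement is that the third eigenvalue $1/\lambda^2$ is rational whenever $\lambda$ is, making its eigenspace a rational line. (iv) Even granting a rational two-dimensional eigenspace, ``forces the two rays spanning it to be rational'' is a non sequitur: an irrational ray such as $[(1,\sqrt2,0)]$ lies in the rational plane $\{z=0\}$. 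To derive a contradiction there you would need an additional argument (for instance, that the group fixes that face pointwise, so the face's conical semigroup would have to be finitely generated in Hilbert's classical sense, contradicting irrationality of its extreme rays), which your proposal does not supply. As written, the necessity direction is therefore not closed.
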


\begin{proof}[Proof of Theorem~\ref{thm:main3d}]
    The sufficiency follows from~\Cref{thm:simple-sufficient}.
    For the necessity, if the extreme rays of $C$ are not all rational, then by by~\Cref{cor:3DSimple}, the only possibilities for an irrational polyhedral cone to be $(R,G)$-finitely generated is simple. Applying \Cref{thm:polyhedral-necessary} shows that $u_1, u_2, u_3$ are eigenvectors with positive eigenvalues of a non-identity unimodular matrix $A\in\operatorname{GL}(3,\ZZ)$. We show that the eigenvalues $\lambda_1,\lambda_2,\lambda_3$ of $A$ are distinct.

    If $\lambda_1, \lambda_2, \lambda_3\notin \QQ$, then the characteristic polynomial $p_A(t)\in\ZZ[t]$ is irreducible over $\QQ$, and all eigenvalues of $A$ are distinct.
    
    If only one of $\lambda_1,\lambda_2,\lambda_3$ is in $\QQ$, then by~\Cref{lemma:RationalEigenvalue}, $p_A(t)=(t-1)q(t)$ where $q(t)$ is an irreducible quadratic polynomial, the roots of which must be irrational and distinct. Thus, all eigenvalues of $A$ are distinct.

    If at least two of $\lambda_1,\lambda_2,\lambda_3$ are in $\QQ$, then all eigenvalues of $A$ are rational, and so are the eigenvectors $u_1,u_2,u_3$. \qedhere
\end{proof}

\section{Non-finitely generated irrational simple cones}
\label{sec:4dim-example}

While in~ 2- and 3-dimensional cases (\Cref{sec:2dim,sec:3dim}), the necessary condition (\Cref{thm:polyhedral-necessary}) is also sufficient for a simple cone to be $(R,G)$-finitely generated, in this section we show that this is no longer true in the 4-dimensional case.

Let $C=\begin{pmatrix}3&2\\4&3 \end{pmatrix}$. The eigenvectors of $C$ are $e_1=\begin{pmatrix} 1\\ \sqrt{2}\end{pmatrix}$ and $e_2=\begin{pmatrix} -1\\ \sqrt{2}\end{pmatrix}$. Let $u_1=\begin{pmatrix} e_1\\0\end{pmatrix}$, $u_2=\begin{pmatrix} 0\\e_1\end{pmatrix}$, $u_3=\begin{pmatrix} e_2\\2e_2\end{pmatrix}$, $u_4=\begin{pmatrix} -e_2\\-e_2\end{pmatrix}$. Observe that $u_i$ are eigenvectors of the $4\times 4$ block-diagonal matrix $A=\begin{pmatrix} C&0\\0&C\end{pmatrix}$, where $u_1, u_2$ have the same eigenvalues $3+2\sqrt{2}$ and $u_3, u_4$ have the same eigenvalues $3-2\sqrt{2}$.

Let $M=\begin{pmatrix} 1& 0&-1&1\\ \sqrt{2}&0&\sqrt{2}&-\sqrt{2}\\0&1&-2&1\\0&\sqrt{2}&2\sqrt{2}&-\sqrt{2} \end{pmatrix}$
whose columns are $u_1,u_2,u_3,u_4$. Then coordinates $\alpha=(\alpha_1,\alpha_2,\alpha_3,\alpha_4)$ in the eigenvector basis translate into the vector $M\alpha$. Since we are interested in the case where $M\alpha \in \ZZ^4$ we may assume that $\alpha_i$ lie on the field $\QQ(\sqrt{2})$, i.e. we can write $\alpha_i=a_i+b_i\sqrt{2}$ where $a_i,b_i \in \QQ$. For $a+b\sqrt{2}\in \QQ(\sqrt{2})$ define the conjugate of $a+b\sqrt{2}$ as $a-b\sqrt{2}$.

Multiplying out $M\alpha$ we get

$$M\alpha=\begin{pmatrix} a_1-a_3+a_4+\sqrt{2}(b_1-b_3+b_4)\\2(b_1+b_3-b_4)+\sqrt{2}(a_1+a_3-a_4)\\ a_2 - 2 a_3 + a_4 + \sqrt{2}( b_2 - 2b_3 + b_4)\\   2(b_2 + 2b_3 -  b_4)+\sqrt{2}( a_2 + 2 a_3 -  a_4)\end{pmatrix}.$$

We can set the irrational parts of $M\alpha$ to 0 and solve for $a_3,b_3,a_4,b_4$ in terms of $a_1,b_1,a_2,b_2$ to get 
\begin{align}\label{eq:rel}
\begin{split}
a_3&=a_1-a_2\\ b_3&=b_2-b_1\\a_4&=2a_1-a_2\\b_4&=b_2-2b_1.
\end{split}
\end{align}
Substituting this into $M\alpha$ we get $$M\alpha=\begin{pmatrix}2a_1\\4b_1\\2a_2\\4b_2\end{pmatrix}.$$
From this we see that any point that satisfies \eqref{eq:rel} where $a_1,a_2$ are half-integral and $b_1,b_2$ are quarter-integral results in an integer point $M\alpha$. For simplicity of the notations, we can assume that 
$\alpha_1 = \frac{a_1}{2} + \frac{b_1\sqrt{2}}{4}$, $\alpha_2 = \frac{a_2}{2} + \frac{b_2\sqrt{2}}{4}$,
$\alpha_3 = \bar{\alpha}_1 - \bar{\alpha}_2$, $\alpha_4=2\bar{\alpha}_1 - \bar{\alpha}_2$, where $a_1,b_1,a_2,b_2\in\ZZ$.
This is an expression for the lattice $\alpha\in M^{-1}\ZZ^4$.

Before we show the example of $cone(u_1,u_2,u_3,u_4)$, we first show that a subset of this cone is not $(R,G)$-finitely generated.

\begin{proposition}\label{ex:subcone_4dim}
    The cone generated by $\{u_1, u_1+u_2, u_3, u_3+u_4\}$ is not $(R,G)$-finitely generated.
\end{proposition}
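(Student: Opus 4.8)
The plan is to exploit the repeated eigenvalues of $A$ to show that every admissible symmetry group must act by a common scalar on the two extreme rays lying in each eigenspace, and therefore preserves a ratio that is forced to stay bounded on any finitely generated semigroup — while that ratio is in fact unbounded on the lattice points of the cone. Write $w_1=u_1$, $w_2=u_1+u_2$, $w_3=u_3$, $w_4=u_3+u_4$ for the four extreme rays and set $K=\operatorname{cone}(w_1,w_2,w_3,w_4)$, a full-dimensional simplicial cone with all rays irrational; note $w_1,w_2\in V_+:=\operatorname{span}_\RR\{u_1,u_2\}$ and $w_3,w_4\in V_-:=\operatorname{span}_\RR\{u_3,u_4\}$, the eigenspaces of $A$ for $3+2\sqrt2$ and $3-2\sqrt2$. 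For $x\in K$ write $x=\sum_i c_i(x)w_i$ with $c_i(x)\ge0$; in the eigencoordinates $\alpha$ one has $c_1=\alpha_1-\alpha_2$, $c_2=\alpha_2$, and, using $\alpha_3=\bar\alpha_1-\bar\alpha_2$ and $\alpha_4=2\bar\alpha_1-\bar\alpha_2$, also $c_3=-\bar\alpha_1$ and $c_4=2\bar\alpha_1-\bar\alpha_2$. Suppose for contradiction that $S_K$ is $(R,G)$-finitely generated. Let $H_K$ be the unimodular matrices having $w_1,\dots,w_4$ as eigenvectors with positive eigenvalues (the set $H_C$ of the paper for $K$). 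By \Cref{lemma:NormalSubgroup}, $S_K$ is $(R_H,H)$-finitely generated with $H=G\cap H_K$; if $H$ were finite then $G$ would be finite and \Cref{lem:finite_G} would make $K$ rational polyhedral, a contradiction, so in any case the argument below applies to $H\subseteq H_K$.

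The key step is to show that every $B\in H_K$ satisfies $Bw_1=\mu w_1$ and $Bw_2=\mu w_2$ with a \emph{common} eigenvalue $\mu$, whence $c_1(Bx)/c_2(Bx)=c_1(x)/c_2(x)$ for all $x\in K$. Since $B$ has integer entries it commutes with the coordinatewise Galois conjugation $\sigma$ ($\sqrt2\mapsto-\sqrt2$), and $\sigma(V_-)=V_+$. If the eigenvalues of $B$ on $w_1,w_2$ were distinct, then $[w_1],[w_2]$ would be the only eigendirections of $B$ inside $V_+$; but with $\mu_3$ the eigenvalue of $B$ on $w_3$, the vector $\sigma(w_3)=\sigma(u_3)=-u_1-2u_2\in V_+$ also satisfies $B\sigma(w_3)=\sigma(\mu_3)\sigma(w_3)$, and it is proportional to neither $w_1$ nor $w_2$ — a contradiction. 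Hence $B$ acts as a scalar on $V_+$, giving the claimed ratio invariance for all of $H$. Next I would record that the facet $\{c_2=0\}=\operatorname{cone}(w_1,w_3,w_4)$ contains no nonzero lattice point: $\alpha_2=0$ forces $a_2=b_2=0$, and then the constraints $c_3=-\bar\alpha_1\ge0$, $c_4=2\bar\alpha_1\ge0$ force $\bar\alpha_1=0$, so $x=0$. Consequently every nonzero generator $T_i r_i$ ($T_i\in H$, $r_i\in R_H$) has $c_2>0$ and inherits the ratio $c_1(r_i)/c_2(r_i)$ from a finite list. Writing an arbitrary nonzero $s\in S_K$ as a nonnegative integer combination of such generators, $c_1(s)/c_2(s)$ becomes a weighted average of these finitely many fixed ratios, so $c_1(s)/c_2(s)\le\Gamma$ for a constant $\Gamma=\max_{r\in R_H\setminus\{0\}}c_1(r)/c_2(r)$.

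To finish I would exhibit lattice points violating this bound. Taking $\alpha_1=\tfrac{\sqrt2}{4}$ fixed and $\alpha_2=(3-2\sqrt2)^m\,\tfrac{\sqrt2}{4}$, both lie in the fractional ideal $\tfrac{\sqrt2}{4}\ZZ[\sqrt2]$ underlying the lattice $M^{-1}\ZZ^4$, so the resulting $\alpha$ is an integer point; one checks $c_1=\tfrac{\sqrt2}{4}\bigl(1-(3-2\sqrt2)^m\bigr)\ge0$, $c_2=\alpha_2>0$, $c_3=\tfrac{\sqrt2}{4}>0$, and $c_4=-\tfrac{\sqrt2}{2}+(3+2\sqrt2)^m\tfrac{\sqrt2}{4}>0$ for $m\ge1$, so the point lies in $K$, yet $c_1/c_2=(3+2\sqrt2)^m-1\to\infty$, contradicting the bound $\Gamma$. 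The conceptual crux — and the precise manifestation of the ``repeated eigenvalue'' phenomenon — is the Galois argument forcing the scalar action on $V_+$: it is exactly what prevents the low-rank group $H$ from adjusting the ratio $c_1:c_2$, so that the degrees of freedom one would need to balance a four-dimensional cone collapse. The remaining steps are routine lattice bookkeeping, namely verifying the facet emptiness and the membership $s_m\in K$.
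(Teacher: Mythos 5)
Your proof is correct, and it follows the same overall strategy as the paper's: reduce via \Cref{lemma:NormalSubgroup} to a group fixing the extreme rays, show that every such unimodular matrix must act with a \emph{repeated} eigenvalue on $\operatorname{span}\{u_1,u_2\}$ so that the ratio $c_1/c_2$ is a group invariant, observe that the facet $\{c_2=0\}$ contains no nonzero lattice point so the ratio is bounded over any $(R,G)$-generated semigroup, and then contradict this with lattice points of unbounded ratio. Where you genuinely diverge is in the two key sub-steps. First, the paper forces the repeated eigenvalues by explicitly computing $B=M'\Lambda M'^{-1}$, reading off integrality constraints, and solving the Pell equation $x^2-2y^2=1$ to conclude that the admissible group is exactly $\langle A\rangle$; your Galois-conjugation argument (that $\sigma(w_3)=-u_1-2u_2$ is a third eigendirection of $B$ inside $V_+$ unless $B$ acts there as a scalar) reaches the weaker but sufficient conclusion — scalar action on $V_+$ — without any matrix computation or classification of Pell solutions, and it would transfer directly to analogous constructions over other real quadratic fields. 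Second, for the unboundedness of the ratio, the paper argues abstractly that the full-dimensional subcone $\operatorname{cone}(u_1,\,(1+\epsilon)\alpha_1^*u_1+\alpha_2^*(u_1+u_2),\,u_3,\,u_3+u_4)$ must contain an integer point, while you exhibit the explicit family $\alpha_1=\tfrac{\sqrt2}{4}$, $\alpha_2=(3-2\sqrt2)^m\tfrac{\sqrt2}{4}$ with $c_1/c_2=(3+2\sqrt2)^m-1\to\infty$; your version is fully constructive and checkable (the membership $\alpha_1,\alpha_2\in\tfrac{\sqrt2}{4}\ZZ[\sqrt2]$ and the positivity of all four coordinates $c_i$ for $m\ge1$ are verified correctly), whereas the paper's is shorter but leans on an existence claim. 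The trade-off: the paper's computation yields the precise symmetry group of the cone, which is of independent interest; your route isolates the conceptual obstruction (Galois conjugation linking the two eigenspaces) more cleanly and with less calculation.
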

\begin{proof}
Suppose, for the sake of contradiction, that the cone is $(R,G)$-finitely generated.
By~\Cref{thm:polyhedral-necessary}, there is a matrix $B\in\GL(4,\ZZ)$ with the given eigenvectors $\{u_1, u_1+u_2, u_3, u_3+u_4\}$ and positive eigenvalues. 
Further, by~\Cref{lemma:NormalSubgroup}, it suffices to consider the group $G$ of such matrices.

First, we show that such matrix with the given eigenvectors $\{u_1, u_1+u_2, u_3, u_3+u_4\}$ must have two eigenvalues of multiplicity 2. More specifically, such matrix must be in the group generated by the matrix $A$.
This is because such matrix $B$ satisfies $B M' = M' \Lambda$, where $M'=[u_1, u_1+u_2, u_3, u_3+u_4]$, $\Lambda=\mathrm{diag}(\lambda_1,\lambda_2,\lambda_3,\lambda_4)$. As we are interested in the case where $B\in \ZZ^{4\times 4}$ and $M'\in\QQ(\sqrt{2})^{4\times4}$, we may assume assume that $\lambda_i$ lie on the field $\QQ(\sqrt{2})$, i.e. we can write $\lambda_i=x_i+y_i\sqrt{2}$ where $x_i,y_i \in \QQ$. Then 
\[\begin{aligned}
B = M'\Lambda M'^{-1}=
&\lambda_1
\begin{pmatrix}
\begin{array}{c|c}
\begin{matrix} 
\frac{1}{2} & \frac{\sqrt{2}}{4} \\[3pt] 
\frac{\sqrt{2}}{2} & \frac{1}{2} 
\end{matrix} 
& 
\begin{matrix} 
-\frac{1}{2} & -\frac{\sqrt{2}}{4} \\[3pt] 
-\frac{\sqrt{2}}{2} & -\frac{1}{2} 
\end{matrix} 
\\[3pt] \hline
\begin{matrix} 0 & 0 \\[3pt] 0 & 0 \end{matrix} 
& 
\begin{matrix} 0 & 0 \\[3pt] 0 & 0 \end{matrix}
\end{array}
\end{pmatrix}
+ \lambda_2
\begin{pmatrix}
\begin{array}{c|c}
\begin{matrix} 
0 & 0 \\[3pt] 0 & 0 
\end{matrix} 
& 
\begin{matrix} 
\frac{1}{2} & \frac{\sqrt{2}}{4} \\[3pt] 
\frac{\sqrt{2}}{2} & \frac{1}{2} 
\end{matrix} 
\\[3pt] \hline
\begin{matrix} 0 & 0 \\[3pt] 0 & 0 \end{matrix} 
& 
\begin{matrix} \frac{1}{2} & \frac{\sqrt{2}}{4} \\[3pt] \frac{\sqrt{2}}{2} & \frac{1}{2} \end{matrix}
\end{array}
\end{pmatrix}\\
& + \lambda_3
\begin{pmatrix}
\begin{array}{c|c}
\begin{matrix} 
\frac{1}{2} & -\frac{\sqrt{2}}{4} \\[3pt] 
-\frac{\sqrt{2}}{2} & \frac{1}{2} 
\end{matrix} 
& 
\begin{matrix} 
0 & 0 \\[3pt] 0 & 0 
\end{matrix} 
\\[3pt] \hline
\begin{matrix} 1 & -\frac{\sqrt{2}}{2} \\[3pt] -\sqrt{2} & 1 \end{matrix} 
& 
\begin{matrix} 0 & 0 \\[3pt] 0 & 0 \end{matrix}
\end{array}
\end{pmatrix}
+ \lambda_4
\begin{pmatrix}
\begin{array}{c|c}
\begin{matrix} 
0 & 0 \\[3pt] 0 & 0 
\end{matrix} 
& 
\begin{matrix} 
0 & 0 \\[3pt] 0 & 0 
\end{matrix} 
\\[3pt] \hline
\begin{matrix} -1 & \frac{\sqrt{2}}{2} \\[3pt] \sqrt{2} & -1 \end{matrix} 
& 
\begin{matrix} \frac{1}{2} & -\frac{\sqrt{2}}{4} \\[3pt] -\frac{\sqrt{2}}{2} & \frac{1}{2} \end{matrix}
\end{array}
\end{pmatrix}.
\end{aligned}\]
Since $B\in\mathbb{Z}^{4\times 4}$, we can get $\lambda_1 = \lambda_2 = x+y\sqrt{2}$, and $\lambda_3=\lambda_4=x-y\sqrt{2}$, where $x,y\in\mathbb{Z}$, and $B = \begin{pmatrix}
    x & y & 0 & 0\\
    2y & x & 0 & 0\\
    0 & 0 & x & y\\
    0 & 0 & 2y & x
\end{pmatrix}.$
Because all the eigenvalues are positive, we have $x \pm y\sqrt{2}\ge 0$. And $B$ is unimodular implying $x^2 - 2y^2 =1$. Such unimodular matrices are generated by the unimodular matrix $A$ when $x=3, y=2$ (see, for example, \cite{barbeau2003pell}).
We thus assume without loss of generality that the group $G=\langle A\rangle$.

Because any matrix from the group action has eigenvalues $\lambda_1=\lambda_2$ and $\lambda_3=\lambda_4$, we know that for any $M'\alpha\in\ZZ^4$ with $\alpha>0$, the ratio $\frac{\alpha_1}{\alpha_2}$ and $\frac{\alpha_3}{\alpha_4}$ will not change after any group action in $G$. 
Then, we show that there is no integer point $M'\alpha$ ($\alpha\ne 0$) in the cone such that $\alpha_2=0$, $\alpha_1,\alpha_3,\alpha_4\ge 0$. Otherwise, we may write one such point as $M'\alpha = M \begin{pmatrix}
    \alpha_1\\
    0\\
    \alpha_3+\alpha_4\\
    \alpha_4
\end{pmatrix}$, where $\alpha_1 = \frac{a_1}{2} + \frac{b_1\sqrt{2}}{4}$, $\alpha_3+\alpha_4 = \bar{\alpha}_1$, $\alpha_4=2\bar{\alpha}_1$, $a_1,b_1\in\ZZ$. Because $\alpha_3\ge0, \alpha_4\ge 0$, we know that $\bar{\alpha}_1=0$. This implies that $\alpha = 0$, a contradiction.

Suppose that $M'\alpha^*$ is the point in $R$ maximizing $\frac{\alpha^*_1}{\alpha^*_2}$, which exists because no integer points in the cone satisfy $\alpha_2= 0$ and $R$ is finite. Then we know that all the possible points generated by $R$ and the group action $G$ must satisfy $0\le \frac{\alpha_1}{\alpha_2}\le \frac{\alpha^*_1}{\alpha^*_2}$. However, because the cone generated by $u_1, (1+\epsilon)\alpha_1^* u_1 + \alpha^*_2 (u_1 + u_2), u_3, u_3 + u_4$ ($\epsilon>0$) is full dimensional, there must exists an integer point in the cone whose ratio $\frac{\alpha_1}{\alpha_2}\ge (1+\epsilon) \frac{\alpha^*_1}{\alpha^*_2}$.
This shows the contradiction to $(R,G)$-finite generation of the cone.
\end{proof}

Note that there are also no nonzero integer points in this cone satisfying $\alpha_1=0$ or $\alpha_3 = 0$ or $\alpha_4=0$. Such integer points can be viewed as an approximation to the extreme rays because such integer points can be balanced to be as close as possible to the extreme ray under the group action.


However, the cone can fail to be $(R,G)$-finitely generated even with the existence of integer vectors on the boundary approximating the extreme rays. The following example in \Cref{ex:cone_4dim} shows more obstructions for the $(R,G)$-finitely generation.

\medskip
\begin{proposition}\label{ex:cone_4dim}
    The cone generated by $\{u_1, u_2, u_3, u_4\}$ is not $(R,G)$-finitely generated.
\end{proposition}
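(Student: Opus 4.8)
The plan is to extend the obstruction from Proposition~\ref{ex:subcone_4dim} to the full cone $\operatorname{cone}(u_1,u_2,u_3,u_4)$. The first step is to show that the admissible group $G$ is again essentially $\langle A\rangle$. By~\Cref{thm:polyhedral-necessary}, any candidate matrix $B\in\GL(4,\ZZ)$ must have $u_1,u_2,u_3,u_4$ as eigenvectors with positive eigenvalues, and by~\Cref{lemma:NormalSubgroup} it suffices to work with the subgroup $H$ of such matrices. Since $u_1,u_2$ span a rational subspace on which $B$ acts, and similarly $u_3,u_4$, the same computation as in the subcone case (writing eigenvalues in $\QQ(\sqrt2)$ and forcing $B$ to be integral) should force $\lambda_1=\lambda_2$ and $\lambda_3=\lambda_4$ to be conjugate units $x\pm y\sqrt2$ with $x^2-2y^2=1$. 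Hence $H=\langle A\rangle$, so every group action scales the $(\alpha_1,\alpha_2)$-block by one unit and the $(\alpha_3,\alpha_4)$-block by its conjugate, and in particular \emph{preserves the ratios} $\alpha_1/\alpha_2$ and $\alpha_3/\alpha_4$.

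The core of the argument is the same ratio-invariance obstruction, but now I must handle the fact that the full cone is four-dimensional and interacts both blocks simultaneously. Using the explicit lattice description $\alpha_1=\tfrac{a_1}2+\tfrac{b_1\sqrt2}4$, $\alpha_2=\tfrac{a_2}2+\tfrac{b_2\sqrt2}4$, $\alpha_3=\bar\alpha_1-\bar\alpha_2$, $\alpha_4=2\bar\alpha_1-\bar\alpha_2$ derived before the proposition, I would express the two ratios as functions of $(a_1,b_1,a_2,b_2)\in\ZZ^4$. The key point is that $\alpha_3,\alpha_4$ are \emph{not} independent of $\alpha_1,\alpha_2$: they are determined by the conjugates $\bar\alpha_1,\bar\alpha_2$. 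So the pair of invariants $(\alpha_1/\alpha_2,\ \alpha_3/\alpha_4)$ takes values determined by a single rational-parameter family, yet the cone of positivity constraints $\alpha_1,\alpha_2,\alpha_3,\alpha_4\ge0$ carves out a genuinely two-dimensional range of achievable ratio-pairs. I would argue that a finite set $R$ produces only finitely many ratio-pairs (since $G$ fixes both ratios), whereas the positivity region contains integer points realizing ratio-pairs that are \emph{dense}, or at least unbounded in an appropriate sense, in that two-dimensional range—contradicting finiteness of $R$.

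Concretely, I expect to mimic the final paragraph of Proposition~\ref{ex:subcone_4dim}: pick a boundary direction, find a sequence of integer points in the cone whose ratio $\alpha_1/\alpha_2$ (or the pair of ratios) escapes any bound achievable from the finite set $R$ under the ratio-fixing action of $G$. Because the cone $\operatorname{cone}(u_1,u_2,u_3,u_4)$ is full-dimensional, for any proposed maximal ratio realized in $R$ one can perturb into the interior and produce an integer point exceeding it. The main obstacle I anticipate is the bookkeeping of the \emph{simultaneous} constraints: unlike the subcone case where the two ratios were effectively decoupled along the chosen edge, here the conjugation relation ties the $(\alpha_1,\alpha_2)$ data to the $(\alpha_3,\alpha_4)$ data, so I must verify that positivity in all four coordinates still leaves enough freedom to push one ratio arbitrarily while keeping the point inside the cone. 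Establishing that this freedom survives—i.e.\ that the feasible set of achievable ratio-pairs is not accidentally finite or bounded—is the crux, and I would resolve it by exhibiting an explicit one-parameter family of lattice points $(a_1,b_1,a_2,b_2)$ whose image under $M$ lies in the cone and whose ratio invariants are unbounded.
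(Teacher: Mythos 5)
Your first two steps (the reduction to $G=\langle A\rangle$ via the explicit $\QQ(\sqrt 2)$ computation, and the invariance of the ratios $\alpha_1/\alpha_2$, $\alpha_3/\alpha_4$ under $G$) coincide with the paper's proof. The gap is in your core contradiction. You argue that a finite $R$ yields only finitely many ratio-pairs while the cone contains integer points whose ratios are unbounded. But elements of $S_C$ are nonnegative integer \emph{sums} of elements of $G\cdot R$, and the ratio of a sum need not be a ratio of any summand. If every summand $\beta^{(j)}$ has $\beta_2^{(j)}>0$ and $\beta_1^{(j)}/\beta_2^{(j)}\le\rho$, then $\sum_j\lambda_j\beta_1^{(j)}\le\rho\sum_j\lambda_j\beta_2^{(j)}$, so the sum's ratio is at most $\rho$; that mediant bound is what closes the argument in Proposition~\ref{ex:subcone_4dim}, where the face $\{\alpha_2=0\}$ contains no nonzero integer point. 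The full cone, however, \emph{does} contain integer points on that face --- e.g.\ $\tfrac12 u_1+\tfrac12 u_3+u_4=(1,0,0,0)\in\ZZ^4$ --- and nothing prevents $R$ from containing them. Sums of the form $k\cdot(1,0,0,0)+w$ then realize arbitrarily large ratios $\alpha_1/\alpha_2$, so unboundedness (or denseness) of the achievable ratio-pairs is simply not in conflict with $(R,G)$-finite generation. This is precisely the ``more obstructions'' the paper flags when contrasting the two propositions.

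What is missing is the two-step argument the paper actually uses. First, if an integer point $\alpha$ has $\alpha_1/\alpha_2$ larger than the largest finite ratio occurring in $R$, then by the mediant bound its decomposition must contain some $\beta\in G\cdot R$ with $\beta_2=0$ and $0\le\beta\le\alpha$ componentwise. Second, one must exhibit an infinite family of integer points whose ratio tends to infinity \emph{and} from which no nonzero integer point of the face $\{\beta_2=0\}$ can be subtracted while staying in the cone. This second step, not the mere existence of unbounded ratios, is the crux: integer points on that face have eigencoordinates $(\bar s,0,s,2s)$ with $s=\tfrac a2+\tfrac b4\sqrt 2$, $a,b\in\ZZ$, and the paper's family $v^{(n_1)}$ is engineered so that $\alpha_1\alpha_4<\tfrac14$; subtractability would force $0\le 2s\bar s=\tfrac{2a^2-b^2}{4}\le\alpha_1\alpha_4<\tfrac14$, hence $2a^2=b^2$ and $s=0$ by irrationality of $\sqrt 2$. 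Without this reduction to face points and the accompanying norm (Pell-type) estimate, your proposed proof does not close.
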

\begin{proof}
Similarly to the argument in Example \ref{ex:subcone_4dim}, we can assume that the group $G$ is the group generated by the same matrix $A$.

Because any matrix from the group action has eigenvalues $\lambda_1=\lambda_2$ and $\lambda_3=\lambda_4$, we know that for any $M'\alpha\in\ZZ^4$ with $\alpha>0$, the ratio $\frac{\alpha_1}{\alpha_2}$ and $\frac{\alpha_3}{\alpha_4}$ will not change after any group action in $G$. 

However, unlike Example \ref{ex:subcone_4dim}, there exist integer vectors on the boundary of the cone approximating the extreme rays 
\[
\begin{array}{ll}
r_1 = \frac{1}{2} u_1 + \frac{1}{2} u_3 + u_4 = \begin{pmatrix} 1 \\ 0 \\ 0 \\ 0 \end{pmatrix}, \quad 
&r_2 = \frac{\sqrt{2}-1}{2} u_2 + \frac{\sqrt{2}+1}{2} u_3 + \frac{\sqrt{2}+1}{2} u_4 = \begin{pmatrix} 0 \\ 0 \\ -1 \\ 2 \end{pmatrix}, \\[8pt]
r_3 = \frac{\sqrt{2}-1}{2} u_1 + (\sqrt{2}-1) u_2 + \frac{\sqrt{2}+1}{2} u_3 = \begin{pmatrix} -1 \\ 2 \\ -2 \\ 4 \end{pmatrix}, \quad 
&r_4 = \frac{1}{2} u_1 + \frac{1}{2} u_2 + \frac{1}{2} u_4 = \begin{pmatrix} 1 \\ 0 \\ 1 \\ 0 \end{pmatrix}.
\end{array}.
\]

In general, for an integer point $M\beta\in\ZZ^4$ with $\beta=(\beta_1,\beta_2,\beta_3,\beta_4)$ from the boundary, we have
\begin{itemize}
    \item if $\beta_1 = 0$, then $\beta_2=-\bar{s}$, $\beta_3=s$, $\beta_4=s$, where $s\in\frac{a}{2} + \frac{b}{4}\sqrt{2}$, $a,b\in\ZZ$;
    \item if $\beta_2 = 0$, then $\beta_1=\bar{s}$, $\beta_3=s$, $\beta_4 = 2s$, where $s\in\frac{a}{2} + \frac{b}{4}\sqrt{2}$, $a,b\in\ZZ$;
    \item if $\beta_3 = 0$, then $\beta_1=s$, $\beta_2=s$, $\beta_4=\bar{s}$, where $s\in\frac{a}{2} + \frac{b}{4}\sqrt{2}$, $a,b\in\ZZ$;
    \item if $\beta_4 = 0$, then $\beta_1=s$, $\beta_2=2s$, $\beta_3 = -\bar{s}$, where $s\in\frac{a}{2} + \frac{b}{4}\sqrt{2}$, $a,b\in\ZZ$;
\end{itemize}

Because this cone is $(R,G)$-finitely generated, for any integer point $M\alpha\in\ZZ^4$ with $\alpha\ge 0$, there exists a point $M\beta\in G \cdot R$ with $0\le \beta \le\alpha$.
Because $R$ is finite, for any point $M\beta$ in $R$, either $\beta_2 = 0$, or $0\le \frac{\beta_1}{\beta_2}\le \frac{\alpha^*_1}{\alpha^*_2}$, where $M\alpha^*$ is the point in $R$ with $\beta_2\ne 0$ maximizing the ratio between the first two entries. Note that the ratio $\frac{\beta_1}{\beta_2}$ is preserved under the group actions in $G$, thus for any point $M\beta$ in $G\cdot R$, either $\beta_2 = 0$, or $0\le \frac{\beta_1}{\beta_2}\le \frac{\alpha^*_1}{\alpha^*_2}$. This implies that if $\frac{\alpha_1}{\alpha_2}>\frac{\alpha^*_1}{\alpha^*_2}$, there exists a point $M\beta\in G\cdot R$ with $\beta_2=0$ and $0\le \beta\le \alpha$.

To show that this cone is not $(R,G)$-finitely generated, we are going to construct an infinite family of points such that $\frac{\alpha_1}{\alpha_2}$ tends to infinity and there does not exist a nonzero vector from the boundary $\beta_2=0$ of the cone such that the point will remain within the cone after subtracting the boundary vector. In this way, there exists a point in this family with $\frac{\alpha_1}{\alpha_2}>\frac{\alpha^*_1}{\alpha^*_2}$. Therefore, there must exist a point $M\beta\in G\cdot R$ with $\beta_2=0$ and $0\le \beta\le \alpha$. This is a contradiction to the construction of these points.

Pick $v^{(n_1)}=M\alpha$, where $\alpha_2=\frac{1}{2}(\sqrt{2}-1)^{2n_1+1}$, $\alpha_4 = \frac{1}{2}$, where $n_1\in\mathbb{Z}_{\ge 0}$, and
\begin{align*}
    &\alpha_1=\frac12\left(\alpha_2 + \bar{\alpha}_4\right)=\frac12\left(\frac{1}{2}(\sqrt{2}-1)^{2n_1+1}+\frac{1}{2}\right),\\
    &\alpha_3=\frac12\left(-\bar{\alpha}_2 + \alpha_4\right)=\frac12\left(\frac{1}{2}(\sqrt{2}+1)^{2n_1+1}+\frac{1}{2}\right).
\end{align*}
Then $\alpha_3 = \bar{\alpha}_1 -\bar{\alpha}_2$, $\alpha_4 = 2\bar{\alpha}_1-\bar{\alpha}_2$.
Assume that $\alpha_i = \frac{a_i}{2} + \frac{b_i}{4}\sqrt{2}$, $i=1,2,3,4$.
By the binomial theorem, $(\sqrt{2}-1)^{2n_1 + 1} = x_i + y_i\sqrt{2}$ for $x_i, y_i\in\ZZ$, which implies that $a_2\in\ZZ$, $\frac{b_2}{2}\in\ZZ$. Also, $\bar{\alpha}_2 = -\frac12(\sqrt{2}+1)^{2n_1+1}=\frac{a_2}{2} - \frac{b_2}{4}\sqrt{2}$, which implies that $a_2^2 - 2(b_2/2)^2=-4\alpha_2\bar{\alpha}_2=-1$. Thus, $a_2$ is odd and $b_2$ is even. Therefore, $a_1 = \frac12(a_2 + 1)\in\ZZ$, and $b_1 = \frac12 b_2\in\ZZ$. These verifications show that the constructed $v^{(n_1)}$ is an integer point in the cone, where
$$
v^{(n_1)} = M\alpha = \begin{pmatrix}\frac{a_2 + a_4}{2}\\\frac{b_2-b_4}{2}\\a_2\\b_2\end{pmatrix} \in\ZZ^4.
$$
We claim that we cannot subtract from $v^{(n_1)}$ any nonzero vector from the boundary $\beta_2=0$ of the cone, and remain within the cone.

From the construction of $v^{(n_1)}$, we have $0<\alpha_2\le \alpha_1$, $\alpha_2\bar{\alpha}_2 = -\frac14$. And $\alpha_1\alpha_4 = \frac14\alpha_2 + \frac18 < \frac14$.
Also $\frac{\alpha_1}{\alpha_2} = \frac12 +\frac12(\sqrt{2}-1)^{2n_1 + 1}$, which tends to infinity as $n_1$ increases. 

Suppose that we can subtract a boundary vector $M\beta$ with $\beta=(\bar{s}, 0, s, 2s)$, where $s = \frac{a}{2}+\frac{b}{4}\sqrt{2}$, $a,b\in\ZZ$.
Then we must have $0\le \bar{s}\le \alpha_1$, $0\le 2s\le \alpha_4$. By multiplying these two inequalities, we have $0\le 2\bar{s}s=\frac{2a^2-b^2}{4}\le \alpha_1\alpha_4<\frac14$. This forces $2a^2-b^2=0$, which only has the trivial solution $0$.

Therefore, we cannot subtract from $v^{(n_1)}$ any nonzero vector on the boundary $\beta_2=0$, and remain within the cone. The construction of this family of points will complete the proof.
\end{proof}

\section{Acknowledgements:} We are grateful for suggestions we received from Brittney Marsters and Bjorn Poonen. The second and third authors are grateful for financial support received from NSF grants DMS-2348578 and DMS-2434665. The last three authors are grateful for the support received by NSF Grant DMS-1929284 of ICERM.

\printbibliography
\end{document}